\let\epsilon\varepsilon
\let\phi\varphi
\let\theta\vartheta
\newtheorem{mytheorem}{Theorem}[section]
\newtheorem{myprop}[mytheorem]{Proposition}
\newtheorem{mycor}[mytheorem]{Corollary} 
\theoremstyle{definition}
\newtheorem{mydef}[mytheorem]{Definition}
\newtheorem{myre}[mytheorem]{Remark}
\newtheorem{mylemma}[mytheorem]{Lemma}
\newcommand{\R}{\mathbb{R}}
\newcommand{\Z}{\mathbb{Z}}
\newcommand{\ip}[1]{{\left\langle #1\right\rangle}}
\newcommand{\norm}[1]{\left\lVert #1\right\rVert}
\renewcommand{\vec}[1]{\mathbf{#1}}
\DeclareMathOperator{\e}{e}
\renewcommand{\d}{\mbox{d}}
\renewcommand{\epsilon}{\varepsilon}
\begin{document}
\title[On homogeneous decomposition spaces]{On homogeneous decomposition spaces and associated decompositions of distribution spaces} \author{Zeineb Al-Jawahri and Morten Nielsen }   \date{\today}
\begin{abstract}
A new construction of decomposition smoothness spaces of homogeneous type is considered. The smoothness spaces  are based on structured and flexible decompositions of the frequency space $\mathbb{R}^d\backslash\{0\}$. We construct simple adapted tight frames for $L_2(\mathbb{R}^d)$ that can be used to fully characterise the smoothness norm in terms of a sparseness condition imposed on the frame coefficients. 
Moreover, it is proved that the frames  provide a universal decomposition of tempered distributions with convergence in the tempered distributions modulo polynomials. As an application of the general theory, the notion of homogeneous $\alpha$-modulation spaces is introduced.
\end{abstract}
\subjclass[2010]{42B35, 42C15, 41A17}
\keywords{Decomposition space, homogeneous space, anisotropic smoothness space, modulation space, Besov space, homogeneous $\alpha$-modulation space}
\maketitle

\section{Introduction}
A major benefit of using smooth wavelet bases is the fact that they are universally applicable in the sense that any tempered distribution has an expansion in the basis. That is, any suitable function or tempered distribution can be decomposed in a corresponding wavelet series with convergence at least in the sense of tempered distributions (possibly modulo polynomials), and the coefficients of the wavelet series precisely capture the smoothness properties of the function or distribution, see \cite{MR1070037}. For example, it is known, see \cite{Meyer,MR1070037}, that suitable sparseness of a wavelet expansion is equivalent to smoothness measured in a Besov space. The fact that smoothness leads to sparse expansions has many important practical implications, e.g. for signal compression, where it is possible to use a sparse representation of a function to compress that function simply by thresholding the expansion coefficients. Several now classical applications have shown that wavelets are well suited to compress images with smoothness measured in a suitable Besov space, see e.g.\ \cite{MR1162221,MR1175690}. 

While wavelet systems and related techniques such as the $\varphi$-transform, see \cite{frazier85,MR1070037}, are based on dyadic decompositions of the frequency domain $\mathbb{R}^d$, the point we would like to make in the present paper is that the dyadic decomposition itself does not play {\em the} decisive role for obtaining nice universal decompositions of tempered distributions capturing smoothness in the associated expansion coefficients.
One of the main contributions of this article shows that fairly general but structured decompositions of the frequency domain allow for any function or tempered distribution to be expanded in an associated compatible system. The systems constructed have atoms that form frames and combines features of both Gabor and wavelet systems reflecting the chosen decomposition of the frequency domain. With this more general frame system we are able to analyse any function or tempered distribution by examining the corresponding frame coefficients. 
 
There has been considerable recent interest in analysing general representation systems and associated general notions of smoothness, see e.g.\  \cite{MR3016517,MR3048591,MR3345605,MR2543193} and references therein. This is in part motivated by applications to signal and image processing of generalized wavelet systems such as $\alpha$-modulation frames, see \cite{MR2396847,MR2737936,fornasier07,MR3520717}, and Shearlet type systems, see \cite{MR3016517,MR2861594,MR3023401,MR3048591}.
The second generation systems are generally based on modified decompositions of the frequency domain as compared to the classical dyadic decompositions. One important implication is, however, that one cannot directly capture coefficient sparseness in terms of smoothness measured on the classical Besov scale associated since it is linked directly to a dyadic frequency decomposition. This problem can be overcome by replacing the Besov scale by decomposition smoothness spaces, see e.g.\ \cite{grobner,MR3048591,BN08,Nielsen}.

The general  theory of decomposition spaces was introduced by Feichtinger and Gr\"obner, see \cite{MR809337,MR910054}, where spaces based on decomposition of both the time and frequency domain were considered.  Triebel \cite{MR725159}  showed that modulation spaces are compatible with the decomposition approach. This later inspired a more general treatment of decomposition smoothness spaces  \cite{BN08,Nielsen}. In the same spirit, very general homogeneous (anisotropic) Besov and Triebel-Lizorkin spaces were considered by Bownik \cite{MR2179611} and by Bownik and Ho  \cite{BH05}.  In a similar dyadic setup, a general approach to homogeneous spaces has  been studied in detail recently by Triebel \cite{MR3409094,MR3682619}.

A benefit of the decomposition space approach is also that anisotropic spaces can be considered without any additional technical complications, but 
one specific shortcoming of the decomposition space theory developed in \cite{BN08,Nielsen} is that it does not allow one to decompose general tempered distributions. Moreover, it is not immediately clear how to treat the case of  homogeneous smoothness spaces with frequency domain $\mathbb{R}^d\backslash\{0\}$. These issues will be addressed in the present paper, where we will consider a general construction of homogeneous smoothness spaces defined on $\mathbb{R}^d\backslash\{0\}$. The family of spaces which we consider are based on structured decomposition of the frequency space $\mathbb{R}^d\backslash\{0\}$. This way of defining smoothness spaces makes it possible to find adapted tight frames for $L_2(\mathbb{R}^d)$. Such frames turn out to provide universal decompositions of tempered distributions with convergence in the tempered distributions modulo polynomials. Moreover, atomic decompositions of the corresponding homogeneous smoothness spaces are obtained, and the smoothness spaces can be completely characterized by a sparseness condition on the frame coefficients. This makes it possible to compress the elements of such homogeneous smoothness spaces using the frame. 

Another approach to homogeneous decomposition type smoothness spaces is to use the framework of Coorbit-theory, see \cite{MR1021139}.  In the Coorbit-setting,  homogeneous spaces and associated stable expansions have been studied by Voigtlaender \cite{2016arXiv160102201V,Voigtlaender:564979} and by F\"uhr and Voigtlaender  \cite{MR3345605}. One marked difference between the Coorbit-approach  and the approach presented in this paper is that our coverings are not constrained by any group theoretic structure.  

The outline of this paper is as follows. 
In Section \ref{sec:background} we introduce the general setting of the paper. We begin by recalling basic definitions and properties of anisotropic spaces and structured admissible coverings, and we close the section by introducing a so-called hybrid regulation function.  
In Section \ref{sec:construction} we construct structured admissible coverings of $\mathbb{R}^d\backslash\{0\}$. The construction is generated using open (anisotropic) balls in $\mathbb{R}^d\backslash\{0\}$. 
In Section \ref{sec:hmspaces} we provide the definition of homogeneous decomposition spaces and state some fundamental properties of these spaces.
The main contribution of the paper is found in Section \ref{sec:conv}, where we show that any tempered distribution can be decomposed into a sum of compactly supported functions satisfying the conditions stated in Proposition \ref{prop1}. 
In Section \ref{section:frames} we first construct a tight frame for $L_2(\mathbb{R}^d)$ compatible with the structured admissible coverings. We then provide the fundamental reproducing identity for tempered distributions by means of the frame system. 
In Section \ref{sec:char} we show that the frame system gives an atomic decomposition of the homogeneous decomposition spaces. 
In Section \ref{sec:examples} we provide an example on constructing a hybrid regulation function and use the methods presented in Section \ref{sec:construction} to obtain structured admissible coverings of $\mathbb{R}^d\backslash\{0\}$. We show that the homogeneous Besov space corresponds to a special case of the construction and we define a new class of homogeneous anisotropic $\alpha$-modulation spaces yielding a new homogeneous version of the $\alpha$-modulation spaces introduced by P.\ Gr\"obner \cite{grobner}.
Finally, in Appendix \ref{sec:appA} we prove some technical lemmas used throughout the paper and Appendix \ref{sec:appB} contains the proof of the fundamental properties of homogeneous decomposition spaces. 

Let us fix some of the notation used in this paper. We let $\mathcal{F}(f)(\xi) := \hat{f}(\xi) := (2\pi)^{-d/2}\int_{\mathbb{R}^d} f(x) \e^{-ix\cdot \xi} \d x, f\in L_1(\mathbb{R}^d)$ denote the Fourier transform with the usual extension to $L_2(\mathbb{R}^d)$. By $F \asymp G$ we mean that there exists two constants $0<c_1\leq c_2<\infty$ such that $c_1F\leq G \leq c_2F$. For two normed vector spaces $X,Y$ we mean by $X\hookrightarrow Y$ that $X\subset Y$ and $\norm{f}_Y \leq C \norm{f}_X$ for some $C>0$ and for all $f\in X$.

\section{The General Setting}\label{sec:background}
In this section we introduce the notation needed to define homogeneous decomposition spaces. The terminology is to a large degree inherited from Feichtinger and Gr\"obner, see \cite{MR809337,MR910054}, and from \cite{Nielsen}. However, certain modifications have been made to adapt to the homogeneous setup. In particular, in Section \ref{sec:hybrid} we introduce a new notion of a so-called hybrid regulation function needed to generate suitable coverings of the frequency space $\mathbb{R}^d\backslash\{0\}$.
\subsection{Anisotropic Norm}
Since we will be working with anisotropic spaces, we first provide a definition of the anisotropic norm $|\cdot|_{\vec{a}}$. 
Let $|\cdot|$ denote the Euclidean norm on $\mathbb{R}^d$ induced by the inner product $\ip{\cdot,\cdot}$ and let $\vec{a} = (a_1,\ldots,a_d)$ be an anisotropy on $\mathbb{R}^d$ satisfying $a_i > 0$ and $\sum_{i=1}^d a_i = d$. 
For $t>0$, the anisotropic dilation matrix $D_{\vec{a}}(t)$ is given by $D_{\vec{a}}(t) := \text{diag}(t^{a_1},\ldots,t^{a_d})$. Based on \cite[Proposition 1.7]{Stein} we may state the following definition.

\begin{mydef}\label{def:aninorm} 
We define the function $|\cdot|_{\vec{a}} : \R^d\to \R_+$ by setting $|0|_{\vec{a}} := 0$ and for $\xi\in\R^d\backslash\{0\}$ we set $|\xi|_{\vec{a}} = t$, where $t$ is the unique solution to the equation $\left|D_{\vec{a}}(1/t)\xi\right| = 1$.
\end{mydef}

According to \cite{Stein} we have the following standard properties of $|\cdot|_{\vec{a}}$:

\begin{itemize}
	\item[(1)] $|\cdot|_{\vec{a}} \in C^\infty(\mathbb{R}^d\backslash\{0\})$.
	\item[(2)] There exists a constant $K\geq 1$ such that
	\begin{equation*}
|\xi + \zeta|_{\vec{a}} \leq K(|\xi|_{\vec{a}}+|\zeta|_{\vec{a}}), \quad \forall \; \xi,\zeta\in\mathbb{R}^d\backslash\{0\}.
	\end{equation*}
\item[(3)] For $t>0$
\begin{equation}\label{eq:conditionaninorm}
|D_{\vec{a}}(t)\xi|_{\vec{a}} = t|\xi|_{\vec{a}}
\end{equation}
\item[(4)] For $\xi \in \mathbb{R}^d\backslash\{0\}$
\begin{align}\label{eq:anormJ2}
c_1 |\xi|^{\alpha_1} &\leq |\xi|_{\vec{a}} \leq c_2 |\xi|^{\alpha_2}, \quad \text{if} \; |\xi|_{\vec{a}} \geq 1, \; \text{and} \\
\label{eq:anormJ1} c_3 |\xi|^{\alpha_2} &\leq |\xi|_{\vec{a}} \leq c_4 |\xi|^{\alpha_1}, \quad \text{if} \; |\xi|_{\vec{a}} < 1,
\end{align}
where $\alpha_1$ denotes the smallest, and $\alpha_2$ the largest entry in $\vec{a} = (a_1,\ldots,a_d)$.
\end{itemize}

The anisotropic norm $|\cdot|_{\vec{a}}$ from Definition \ref{def:aninorm} induces a quasi-distance $\d: \mathbb{R}^d \times \mathbb{R}^d \to [0,\infty)$ given by $\d(\xi,\zeta) := |\xi-\zeta|_{\vec{a}}$. Notice that $(\mathbb{R}^d\backslash\{0\},\d,\d \xi)$ is a space of homogeneous type.

\subsection{Structured Admissible Coverings}\label{sec:homogenems}
In this subsection we recall the notion of structured admissible coverings, see \cite{MR1021139,MR910054}. We say that a collection $\mathcal{Q} := \{Q_j\}_{j\in J}$ of measurable subsets in $\mathbb{R}^d\backslash\{0\}$ is an admissible covering of $\mathbb{R}^d\backslash\{0\}$ if $\mathbb{R}^d\backslash\{0\} = \cup_{j\in J} Q_j$ and $\mathcal{Q}$ satisfy the finite overlap property. That is, if we define
\begin{equation*}
\tilde{j} := \{i\in J: Q_i\cap Q_j \ne \emptyset\}
\end{equation*}
to be the set of neighbours of $Q_j$, then the number of neighbours of each set is uniformly bounded. The structured coverings are a special class of admissible coverings. The idea is, essentially, that each set $Q_j$ is an affine image $Q_j := T_j(Q) := A_jQ + b_j$ of a fixed set $Q \subset \mathbb{R}^d\backslash\{0\}$, where $A \in GL(d,\mathbb{R})$ is an invertible matrix and $b\in \mathbb{R}^d$. We note that the coverings we consider are of the frequency domain $\mathbb{R}^d\backslash\{0\}$. 

\begin{mydef}\label{def:sac}
	Let $J \ne \emptyset$ be a countable index set. Given a family $\mathcal{T} := \{T_j\}_{j\in J} := \{A_j \cdot + b_j\}_{j\in J}$ of invertible affine transformations on $\mathbb{R}^d$. Suppose there exists two bounded open sets $P\subset Q \subset \mathbb{R}^d\backslash\{0\}$, with $P$ compactly contained in $Q$, such that
	\begin{equation*}
		\{P_{j}\}_{j\in J} \quad \text{and} \quad \{Q_{j}\}_{j\in J} \qquad \text{are admissible coverings of $\mathbb{R}^d\backslash\{0\}$.}
	\end{equation*}
	Also assume that there exists a constant $K$ such that
	\begin{equation}\label{prop:eq:bounded}
		(A_jQ + b_j) \cap (A_{k}Q+b_{k}) \ne \emptyset \Rightarrow \norm{A_{k}^{-1}A_j}_{\ell_\infty(\mathbb{R}^d\times \mathbb{R}^d)} \leq K.
	\end{equation}
	Then we call $\mathcal{Q} := \{Q_j\}_{j\in J}$ a structured admissible covering and $\{T_j\}_{j\in J}$ a structured family of affine transformations.
\end{mydef} 

For later use it will be convenient to define $|T| := |A| := |\det A|$. 
We also need partitions of unity compatible with the coverings from Definition \ref{def:sac}.

\begin{mydef}\label{def:bapu}
	Let $\mathcal{Q} := \{Q_j\}_{j\in J}$ be a structured admissible covering of $\mathbb{R}^d\backslash\{0\}$. A family $\Psi = \{\psi_j\}_{j\in J}$ of functions on $\mathbb{R}^d\backslash\{0\}$ is called a bounded admissible partition of unity (BAPU) for $\mathcal{Q}$ if
	\begin{itemize}
		\item[(1)] $\text{supp}(\psi_j) \subset Q_j$ for all $j \in J$,
		\item[(2)] $\sum_{j\in J}\psi_j(\xi) = 1$ for all $\xi \in \mathbb{R}^d\backslash\{0\}$,
		\item[(3)] the constant $C_p := \sup_{j\in J} |Q_j|^{1/p-1} \norm{\mathcal{F}^{-1}\psi_j}_{L_p}$ is finite for all $p \in (0,1]$.
	\end{itemize}
\end{mydef}

\begin{myre}
The constant $C_p$ appearing in the third condition of Definition \ref{def:bapu} is required to be finite for all $0<p\leq 1$ since we  define homogeneous decomposition spaces below  based on the full range of $L_p(\R^d)$-spaces, $0<p\leq \infty$.  However, if one restricts attention to $L_p$-based spaces in the Banach regime, when $1\leq p \leq \infty$, then only the ``endpoint'' condition, $C_1<\infty$, is needed.
\end{myre}

Given $\psi_j \in \Psi$, we define the Fourier multiplier $\psi_j(D)f := \mathcal{F}^{-1}(\psi_j\mathcal{F}f)$ for $f\in L_2(\mathbb{R}^d)$. By \cite[Proposition 1.5.1]{HansTriebel} the conditions in Definition \ref{def:bapu} ensure that all band-limited functions $f\in L_p(\mathbb{R}^d), 0<p\leq \infty$ satisfy
\begin{equation}\label{eq:bandlimited}
	\norm{\psi_j(D)f}_{L_p} \leq C \norm{f}_{L_p}, \quad \forall \; j\in J.
\end{equation}
For a BAPU $\{\psi_j\}_{j\in J}$ associated with a structured admissible covering $\{Q_j\}_{j\in J}$ we define	$\widetilde{\psi}_j := \sum_{k \in \tilde{j}} \psi_k.$ For later use, we observe that $\psi_j\widetilde{\psi}_j = \psi_j$. \\

An important consequence of Definition \ref{def:sac} is that any structured admissible covering admits a bounded admissible partition of unity.

\begin{myprop}\label{prop1}
Let $\mathcal{Q} = \{Q_j\}_{j\in J} := \{T_jQ\}_{j\in J}$ be a structured admissible covering of $\mathbb{R}^d\backslash\{0\}$ and $\{T_j\}_{j\in J}$ a structured family of affine transformations. Then there exist
	\begin{itemize}
		\item[(i)] A BAPU $\{\psi_j\}_{j\in J} \subset \mathcal{S}(\mathbb{R}^d)$ corresponding to $\{Q_j\}_{j\in J}$.
		\item[(ii)] A system $\{\phi_j\}_{j\in J} \subset \mathcal{S}(\mathbb{R}^d)$ satisfying
				\end{itemize}
\begin{align}
\text{\textbullet} \; \label{eq:phisatisfy1} &\text{supp}(\phi_j) \subset Q_j, \quad \forall j \in J, \\
\text{\textbullet} \; \label{eq:phisatisfy2} &\sum_{j\in J} \phi_j^2(\xi) = 1, \quad \forall \xi \in \mathbb{R}^d\backslash\{0\}, \\
\text{\textbullet} \; &C_p := \sup_{j\in J} |Q_j|^{1/p-1} \norm{\mathcal{F}^{-1}\phi_j}_{L_p} < \infty, \quad \forall p \in (0,1] \nonumber.
\end{align}

\end{myprop}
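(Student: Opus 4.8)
The plan is to build both systems from a single smooth mother bump together with the affine structure of the covering, and then reduce condition (3) to a uniform estimate for a family of pulled-back test functions.

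\textbf{Construction.} First I would fix a function $\Phi\in C_c^\infty(\R^d)$ with $0\le\Phi\le 1$, $\Phi\equiv 1$ on $\overline{P}$ and $\operatorname{supp}\Phi\subset Q$; such a $\Phi$ exists because $P$ is compactly contained in $Q$. Transporting it by the affine maps, set $\Phi_j:=\Phi\circ T_j^{-1}$, so that $\operatorname{supp}\Phi_j\subset T_jQ=Q_j$ and $\Phi_j\equiv 1$ on $P_j$. Since $\{P_j\}_{j\in J}$ covers $\R^d\backslash\{0\}$ and each $\Phi_j\ge 0$, the sums
\begin{equation*}
\Sigma(\xi):=\sum_{k\in J}\Phi_k(\xi),\qquad \Sigma_2(\xi):=\sum_{k\in J}\Phi_k^2(\xi)
\end{equation*}
are both $\ge 1$ on $\R^d\backslash\{0\}$, and by the finite overlap property they are locally finite and bounded above. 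Defining
\begin{equation*}
\psi_j:=\frac{\Phi_j}{\Sigma},\qquad \phi_j:=\frac{\Phi_j}{\sqrt{\Sigma_2}},
\end{equation*}
the support condition, the partition identity $\sum_j\psi_j=1$, and the quadratic identity $\sum_j\phi_j^2=1$ are immediate, and since each is smooth with support in the bounded set $Q_j$ we have $\psi_j,\phi_j\in\mathcal{S}(\R^d)$.

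\textbf{Reduction of the $C_p$ bound.} The only nontrivial point is the finiteness of $C_p$. Writing $g_j:=\psi_j\circ T_j$ and using the behaviour of $\mathcal{F}^{-1}$ under an affine change of variables (a direct computation gives $\norm{\mathcal{F}^{-1}(g\circ T_j^{-1})}_{L_p}=|\det A_j|^{\,1-1/p}\norm{\mathcal{F}^{-1}g}_{L_p}$), one finds
\begin{equation*}
\norm{\mathcal{F}^{-1}\psi_j}_{L_p}=|\det A_j|^{\,1-1/p}\norm{\mathcal{F}^{-1}g_j}_{L_p},
\end{equation*}
and since $|Q_j|=|\det A_j|\,|Q|$ the weight cancels exactly, yielding
\begin{equation*}
|Q_j|^{1/p-1}\norm{\mathcal{F}^{-1}\psi_j}_{L_p}=|Q|^{1/p-1}\norm{\mathcal{F}^{-1}g_j}_{L_p}.
\end{equation*}
Thus it suffices to prove $\sup_{j\in J}\norm{\mathcal{F}^{-1}g_j}_{L_p}<\infty$ for each $p\in(0,1]$, and likewise for $\phi_j\circ T_j$.

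\textbf{The main obstacle: uniform control of $g_j$.} Here I expect the real work. On $\operatorname{supp}\Phi$ one has $g_j(\eta)=\Phi(\eta)/\Sigma(T_j\eta)$, and a term $\Phi_k(T_j\eta)=\Phi\bigl(A_k^{-1}A_j\eta+A_k^{-1}(b_j-b_k)\bigr)$ is nonzero only when $T_j\eta\in Q_j\cap Q_k$, i.e.\ $k\in\tilde j$. Condition \eqref{prop:eq:bounded} bounds the linear parts, $\norm{A_k^{-1}A_j}_{\ell_\infty}\le K$, for all such $k$, while the finite overlap property bounds the number of contributing indices. Because $\Phi\in\mathcal{S}(\R^d)$ has globally bounded derivatives, each term $\eta\mapsto\Phi(A_k^{-1}A_j\eta+\cdots)$ has all derivatives controlled by powers of $\norm{A_k^{-1}A_j}$, hence uniformly in $j$; summing the boundedly many terms shows that $\Sigma(T_j\cdot)$ is smooth with derivatives bounded uniformly in $j$ on $\operatorname{supp}\Phi$. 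Combined with the lower bound $\Sigma\ge 1$, the quotient rule forces $1/\Sigma(T_j\cdot)$, and therefore $g_j=\Phi\cdot\bigl(1/\Sigma(T_j\cdot)\bigr)$, to be a smooth function supported in the fixed bounded set $\overline{Q}$ with $\norm{g_j}_{C^N}\le C_N$ uniformly in $j$ for every $N$. A standard integration-by-parts estimate then gives $|\mathcal{F}^{-1}g_j(x)|\le C_N(1+|x|)^{-N}$ with $C_N$ independent of $j$, and choosing $N>d/p$ produces the required uniform bound on $\norm{\mathcal{F}^{-1}g_j}_{L_p}$. The argument for $\phi_j$ is verbatim the same with $\Sigma_2$ replacing $\Sigma$, using $\Sigma_2\ge 1$ and the smoothness of $t\mapsto t^{-1/2}$ on $[1,\infty)$.
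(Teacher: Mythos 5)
Your proposal is correct and follows essentially the same route as the paper: the same mother bump $\Phi$ transported by the $T_j$, the same normalizations $\Phi_j/\Sigma$ and $\Phi_j/\sqrt{\Sigma_2}$, the same affine change-of-variables reduction of the $C_p$ bound (the paper's Lemma \ref{lemma12}), and the same uniform derivative estimates via \eqref{prop:eq:bounded} and finite overlap followed by integration by parts. No substantive differences to report.
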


\begin{proof}
The proof of Proposition \ref{prop1} follows similar techniques as in \cite[Proposition 1]{Nielsen}. Notice that we have the equivalence $|P_j| \asymp |T_j| \asymp |Q_j|$ uniformly in $j \in J$. We begin by proving (i). To construct a BAPU corresponding to the admissible covering $\{Q_j\}_{j \in J}$ we proceed as follows: Choose a non-negative function $\Phi \in C^\infty(\mathbb{R}^d)$ with $\Phi(\xi) = 1$ for all $\xi \in P$ and $\text{supp}(\Phi) \subset Q$. For  $j\in J$, let $g_j(\xi) := \Phi(T_j^{-1}\xi)$. Then $g_j \in C^\infty(\mathbb{R}^d)\backslash\{0\}$ with $P_j \subset \text{supp}(g_j) \subset Q_j$. 
Now, consider the (locally finite) sum $g(\xi) := \sum_{j\in J} g_j(\xi)$. Since $\{Q_j\}_{j \in J}$ has finite height and $\{P_j\}_{j \in J}$ covers $\mathbb{R}^d\backslash\{0\}$ we have $1 \leq g(\xi) \leq N<\infty$ for all $\xi \in \mathbb{R}^d\backslash\{0\}$. For each $j \in J$ we can therefore define 
	\begin{equation} \label{eq:bapu}
	\psi_j(\xi) :=
	\frac{g_j(\xi)}{g(\xi)} =
	\frac{\Phi(T_j^{-1}\xi)}{\sum_{j\in J} \Phi(T_j^{-1}\xi)}, \quad \xi \in \mathbb{R}^d\backslash\{0\}.
	\end{equation}
	It follows that the function in \eqref{eq:bapu} satisfies the first two properties in Definition \ref{def:bapu}. 
	Thus, in order to conclude we need to verify that $\sup_j |T_j|^{1/p-1} \norm{\mathcal{F}^{-1}\psi_j}_{L_p} < \infty$, for all $p \in (0,1]$. 
	For $j \in J$ we define	\begin{equation}\label{eq:hfunction}
	h_j(\xi) := \psi_j(T_j\xi) = \frac{\Phi(\xi)}{g(T_j\xi)}.
	\end{equation}
	Setting $f= \mathcal{F}^{-1}h_j$ in Lemma \ref{lemma12} we have $\hat{f}_j(\xi) := h_j(T_j^{-1}\xi)$ and $f_j(\xi) = \mathcal{F}^{-1}\psi_j(\xi)$. The lemma now implies $\norm{\mathcal{F}^{-1}\psi_j}_{L_p} = |T_j|^{1-1/p} \norm{\mathcal{F}^{-1}h_j}_{L_p}$ for $0<p\leq \infty$. 
Let $T_j = A_j\cdot + b_j$. Expanding the expression in \eqref{eq:hfunction} yields
	\begingroup
	\addtolength{\jot}{3mm}
	\begin{align*}
		h_j(\xi) &= \frac{\Phi(\xi)}{\sum_{L\in\mathcal{T}} \Phi(L^{-1}T_j\xi)} =
		\frac{\Phi(\xi)}{\sum_{L\in\mathcal{T}} \Phi(L^{-1}(A_j\xi + b_j))} &\\
		&= \frac{\Phi(\xi)}{\sum_{k\in J} \Phi(A_{k}^{-1}A_j\xi + A_{k}^{-1}b_j- A_{k}^{-1}b_{k})}. 
	\end{align*}
	\endgroup
	We consider $\partial^\beta h_j$. Let us first recall the formula 
	\begin{equation}\label{eq:Leib}
	\left| \partial^\beta \frac{u(\xi)}{v(\xi)} \right| = \sum_{\substack{|\lambda|\leq |\beta| \\ k=1,\ldots,2|\beta|}} C_k^{\lambda,\beta} \left| \frac{ \partial^\lambda u \partial^{\beta-\lambda} v}{v^{k}} \right|,
	\end{equation}
	for certain constants $C_k^{\lambda,\beta}$. Let $u(\xi) := \Phi(\xi)$ and $v(\xi) := \Phi(A_{k}^{-1}A_j\xi - A_{k}^{-1}b_{k} + A_{k}^{-1}b_j)$. Since \eqref{eq:Leib} represents a finite sum, and the denominator $v(\xi)$ is a smooth function which satisfies $1 \leq v(\xi) \leq \tilde{N}$ for $\tilde{N} < \infty$ it suffices to consider the numerator on the right hand side of \eqref{eq:Leib}. We consider $\partial^\eta v$, where $\eta := \beta-\lambda$. An application of the chain rule shows that $\partial^\eta v = \sum_{\beta: |\beta| = |\eta|} p_\beta \partial^\beta \Phi$, where $p_\beta$ are monomials of degree $|\eta|$ in the entries of $A_{k}^{-1}A_j$. Combining this with the estimate in \eqref{prop:eq:bounded} and the fact that $\text{supp}(\Phi) \subset Q$ we have
	\begin{equation*}
|\partial^\beta h_j(\xi)| \leq C_{\beta} K^{|\beta|} \chi_{Q}(\xi), \quad \beta \in \mathbb{N}_0^d,
	\end{equation*}
	where $C_\beta$ is independent of $j \in J$. 
Since $h_j$ has compact support in $Q_j$ the integration by parts formula yields
\begin{align*}
	\norm{\mathcal{F}^{-1}h_j}^p_{L_p} 
	&= \int_{\mathbb{R}^d} \left| (2\pi)^{-d/2} \int_{\mathbb{R}^d} h_j(\xi) \e^{ix\cdot \xi} \d \xi \right|^p \d x \\
	&\leq C_d\left( \sum_{|\beta| \leq \lceil(d+1)/p\rceil} \norm{\partial^\beta h_j}_{L_1} \right)^p \int_{\mathbb{R}^d} (1+|x|)^{-d-1} \d x < \infty.
\end{align*}
This shows that the last property in Definition \ref{def:bapu} is satisfied, hence proving that $\{\psi_j\}_{j\in J}$ is a BAPU corresponding to the admissible covering $\{Q_j\}_{j\in J}$. By defining 
\begin{equation}\label{eq:squarerootbapu}
\phi_j(\xi) := \frac{g_j(\xi)}{\sqrt{\sum_{k\in J} g_{k}^2(\xi)}}, \quad j\in J,
\end{equation}
we may use the same arguments as above to conclude that $\{\phi_j\}_{j\in J}$ satisfies the properties stated in Proposition \ref{prop1}, hence proving (ii). \qedhere
\end{proof}

The system $\{\phi_j\}_{j\in J}$ in Proposition \ref{prop1} (ii), which in a sense defines the ''square root'' of a BAPU, will be used in the definition of homogeneous decomposition spaces. 
However, before we can provide this definition we need to introduce a so-called hybrid regulation function $\tilde{h}$ satisfying some growth conditions, needed to ensure completeness of the homogeneous decomposition spaces. 

\subsection{Hybrid Regulation Functions}\label{sec:hybrid}
One way to construct  BAPUs suitable for the construction of decomposition spaces is to define BAPUs  with support contained in anisotropic balls of the type $\{B(\xi_j,h(\xi_j))\}$, where $h$ is a so-called regulation function or weight. This approach was first considered by Feichtinger  \cite{MR910054} and later used to construct new inhomogeneous smoothness spaces, see  \cite{Nielsen,grobner}.
General properties of weights and regulation functions 
were studied by Feichtinger \cite{MR599884} and Gr\"ochenig  \cite{MR2385335}.

Typical examples of regulation functions are $h_\alpha(\xi):=|\xi|_{\mathbf{a}}^\alpha$ for $0\leq \alpha\leq 1$. However, this quite natural approach needs to be modified in a homogeneous setup due to the fact that $B(\xi_j,h(\xi_j))$, in general, naturally ``covers'' the excluded frequency $0$ when $|\xi|_{\mathbf{a}}\approx 0$, since the prototype regulation functions generally have ``too large'' values for small frequencies. To fix this problem, we introduce the notion of a hybrid regulation function.
We begin by recalling  the notion of $\d$-moderateness. 

\begin{mydef}
Let $(\mathbb{R}^d\backslash\{0\},\d,\d \xi)$ be a space of homogeneous type. A function $h:\mathbb{R}^d\backslash\{0\} \to (0,\infty)$ is called $\d$-moderate if there exist constants $R,\delta_0 > 0$ such that $\d(\xi,\zeta) \leq \delta_0 h(\xi)$ implies $R^{-1} \leq h(\xi)/h(\zeta) \leq R$ for all $\xi,\zeta\in\mathbb{R}^d\backslash\{0\}$. 
\end{mydef}
We now provide the definition of a hybrid regulation function.

\begin{mydef} \label{def:hfunction}
	Take a non-negative ramp function $\rho \in \mathbb{C}^s$ for some $s\geq 1$ satisfying
	\begin{equation}\label{eq:rho}	
	\rho(\xi) = \begin{cases}
	1& \text{for $0<|\xi|_{\vec{a}} \leq \frac{2}{3}$} \\ 
	\\
	0& \text{for $|\xi|_{\vec{a}} \geq \frac{4}{3}$}
	\end{cases}
	\end{equation}
	and define $\tilde{h}: \mathbb{R}^d\backslash\{0\} \to (0,\infty)$ as
	\begin{equation*}
	\tilde{h}(\xi) = \rho h_1(\xi) + (1-\rho)h_2(\xi),
	\end{equation*}
	where $h_1(\xi)$ and $h_2(\xi)$ are both $\d$-moderate functions satisfying
	\begin{subequations}
		\begin{align}\label{eq:h1}
			c_0|\xi|_{\vec{a}}^{r} \leq h_1(\xi) &\leq c_1|\xi|_{\vec{a}}, &\text{for
				some $c_0, c_1>0$ and $r\geq 1$,} \\ \intertext{and}
			\label{eq:h2}c_2 \leq h_2(\xi) &\leq c_3|\xi|_{\vec{a}}, &\text{for some $c_2, c_3 > 0$.}
		\end{align}
	\end{subequations}
	We call $\tilde{h}: \mathbb{R}^d\backslash\{0\} \to (0,\infty)$ a \textit{hybrid regulation function}.
\end{mydef}
We will provide a specific example on constructing a hybrid regulation function in Section \ref{sec:examples}. The following lemma shows that a hybrid regulation function satisfies the condition of $\d$-moderateness.

\begin{mylemma}\label{lemma1}
	Consider $(\mathbb{R}^d\backslash\{0\},\d,\d \xi)$ and let $\tilde{h}: \mathbb{R}^d\backslash\{0\} \to (0,\infty)$ be a hybrid regulation function in the sense of Definition \ref{def:hfunction}. 
	Then $\tilde{h}$ is d-moderate. 
\end{mylemma}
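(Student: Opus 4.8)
The plan is to verify the $\d$-moderateness of $\tilde h = \rho h_1 + (1-\rho)h_2$ by exploiting three facts: that $h_1$ and $h_2$ are individually $\d$-moderate, that both are pinched between comparable powers of $|\xi|_{\vec a}$ on the relevant frequency ranges via \eqref{eq:h1} and \eqref{eq:h2}, and that the cutoff $\rho$ confines the mixing of $h_1$ and $h_2$ to the compact annulus $\{\frac23 \leq |\xi|_{\vec a} \leq \frac43\}$. The key observation is that on this transition annulus the quasi-norm $|\xi|_{\vec a}$ is bounded above and below, so by \eqref{eq:h1}--\eqref{eq:h2} both $h_1$ and $h_2$ are themselves bounded above and below by absolute constants there; hence $\tilde h$, being a convex-type combination of two quantities each trapped in a fixed interval $[m,M]$ with $0<m\le M<\infty$, is comparable to a constant on the annulus. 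This reduces the problem essentially to the two regions where $\rho$ is locally constant.

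First I would fix $R_i,\delta_i$ to be the moderateness constants for $h_i$ ($i=1,2$) and set $\delta_0 := \min\{\delta_1,\delta_2,\delta'\}$ for a small $\delta'$ to be chosen, aiming to produce a single constant $R$ so that $\d(\xi,\zeta)\le \delta_0\tilde h(\xi)$ forces $R^{-1}\le \tilde h(\xi)/\tilde h(\zeta)\le R$. Next I would split into cases according to where $\xi$ lives. In the inner region $|\xi|_{\vec a}\le \frac23$ we have $\rho(\xi)=1$ so $\tilde h(\xi)=h_1(\xi)$; I would use the moderateness of $h_1$ together with a bound showing that the hypothesis $\d(\xi,\zeta)\le\delta_0\tilde h(\xi)=\delta_0 h_1(\xi)$ keeps $\zeta$ within a controlled multiple of $|\xi|_{\vec a}$, so that $\zeta$ is either still in the inner region (where $\tilde h(\zeta)=h_1(\zeta)$) or drifts into the annulus (where $\tilde h(\zeta)\asymp h_1(\zeta)$ by the pinching argument above). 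Symmetrically, in the outer region $|\xi|_{\vec a}\ge \frac43$ one has $\tilde h(\xi)=h_2(\xi)$ and the same reasoning applies with $h_2$; the only substantive coupling happens when $\xi$ and $\zeta$ straddle the annulus, which the constant comparison on the annulus absorbs.

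The crucial technical step, and the main obstacle, is controlling how far $\zeta$ can be from $\xi$ in terms of $|\xi|_{\vec a}$ when $\d(\xi,\zeta)=|\xi-\zeta|_{\vec a}$ is small. I would use the quasi-triangle inequality (property (2), constant $K$) to write $|\zeta|_{\vec a}\le K(|\xi|_{\vec a}+|\xi-\zeta|_{\vec a})$ and a symmetric lower bound, then invoke the lower bound $h_1(\xi)\ge c_0|\xi|_{\vec a}^{\,r}$ with $r\ge 1$. Here lies the delicacy: for small $|\xi|_{\vec a}$ the quantity $h_1(\xi)$ can be as small as $c_0|\xi|_{\vec a}^{\,r}\ll |\xi|_{\vec a}$, so a hypothesis of the form $|\xi-\zeta|_{\vec a}\le \delta_0 h_1(\xi)$ only guarantees $|\xi-\zeta|_{\vec a}\lesssim \delta_0|\xi|_{\vec a}^{\,r}$, which is \emph{stronger} (not weaker) than a bound by $|\xi|_{\vec a}$ because $|\xi|_{\vec a}^{\,r}\le |\xi|_{\vec a}$ there; this is exactly why the homogeneous setup forces the power $r$ and the lower bound in \eqref{eq:h1}, and I would check that this makes $|\zeta|_{\vec a}\asymp|\xi|_{\vec a}$ with constants depending only on $K,c_0,\delta_0$. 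Once $|\zeta|_{\vec a}\asymp|\xi|_{\vec a}$ is established, concluding $\tilde h(\xi)\asymp\tilde h(\zeta)$ in each case is a bookkeeping exercise combining the individual moderateness estimates with the pinching bounds \eqref{eq:h1}--\eqref{eq:h2}, and collecting the finitely many case-constants into a single $R$.
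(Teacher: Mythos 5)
Your proposal is correct and is essentially the proof in the paper: the same three-region case analysis (an inner region where $\tilde h=h_1$, a compact transition annulus, and an outer region where $\tilde h=h_2$), with the quasi-triangle inequality used to confine $\zeta$ to a region where either the pure formula for $\tilde h(\zeta)$ or a two-sided constant bound on it is available, and the individual $\d$-moderateness of $h_1,h_2$ finishing the pure cases. The paper's version differs only cosmetically: it bounds $h_1,h_2$ on the middle annulus by continuity rather than by the pinching bounds \eqref{eq:h1}--\eqref{eq:h2}, and rather than letting $\zeta$ drift into the annulus it adjusts the case thresholds (to $C<\frac{2/3}{K(1+c_1\delta_0)}$ and $C_2\ge\tfrac83 K$) so that $\zeta$ never changes region relative to $\xi$.

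One misstatement is worth correcting, since it occurs in what you single out as the crucial step. From $|\xi-\zeta|_{\vec a}\le\delta_0 h_1(\xi)$ you do \emph{not} get $|\xi-\zeta|_{\vec a}\lesssim\delta_0|\xi|_{\vec a}^{r}$: the bound $h_1\ge c_0|\xi|_{\vec a}^{r}$ is a lower bound on $h_1$, so it yields no upper bound on $|\xi-\zeta|_{\vec a}$ whatsoever. What actually controls $\zeta$ is the upper bound $h_1(\xi)\le c_1|\xi|_{\vec a}$ in \eqref{eq:h1}, which gives $|\xi-\zeta|_{\vec a}\le\delta_0 c_1|\xi|_{\vec a}$ and hence $|\zeta|_{\vec a}\asymp|\xi|_{\vec a}$ with constants depending on $K,c_1,\delta_0$ (not $c_0$). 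The lower bound with exponent $r\ge 1$ plays no role in this lemma; it is needed later, for the distributional convergence in Lemma \ref{lemma:konvergens} and for the embedding and completeness arguments behind Proposition \ref{prop:complete}. Separately, note that when $K$ is large, $\zeta$ can jump past the annulus $\{\tfrac23\le|\cdot|_{\vec a}\le\tfrac43\}$ entirely, so your pinching argument must be run on a larger compact annulus whose size depends on $K$; this is absorbed by your final bookkeeping claim, but it is precisely why the paper's thresholds $C,C_2$ involve $K$.
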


\begin{proof}
	Since $\tilde{h}$ is a hybrid regulation function, the functions $h_1, h_2$ are assumed to be $\d$-moderate. Thus, let $\delta_0^1, R_1$ and $\delta_0^2, R_2$ be the constants associated to the $\d$-moderation of $h_1, h_2$, respectively. We will consider three different cases for $\xi,\zeta \in \mathbb{R}^d\backslash\{0\}$.  
	
	\begin{itemize}
		\item[Case 1:] 
		Let $0 < |\xi|_{\vec{a}} < C < \tfrac{2}{3}$ for a suitable $C$ to be specified later. Choose $\delta_0 > 0$ such that $\delta_0 \leq \min\{\delta_0^1,\delta_0^2\}$ and $\delta_0 c_1 < 1$. Let
		\begin{equation*}
		|\xi-\zeta|_{\vec{a}} \leq \delta_0 h_1(\xi) \leq c_1\delta_0|\xi|_{\vec{a}} \leq c_1\delta_0 C.
		\end{equation*}
		Let $K \geq 1$ be the constant from the quasi triangle inequality for $|\cdot|_{\vec{a}}$. Then 
		\begin{equation*}
		|\zeta|_{\vec{a}} = |\xi+(\zeta-\xi)|_{\vec{a}} \leq K(|\xi|_{\vec{a}} + |\zeta - \xi|_{\vec{a}}) \leq K(C+c_1\delta_0 C).
		\end{equation*}
		Set $C < \frac{2/3}{K(1+c_1\delta_0)}$, then $|\zeta|_{\vec{a}} \leq \frac{2}{3}$. Thus, we have
		\begin{equation*}
		R_1^{-1} \leq \frac{\tilde{h}(\xi)}{\tilde{h}(\zeta)} = \frac{h_1(\xi)}{h_1(\zeta)} \leq R_1.
		\end{equation*}
		
		\item[Case 2:] Let $C \leq |\xi|_{\vec{a}} \leq C_2$ for $C_2 \geq \tfrac{8}{3}K$. This particular choice of $C_2$ will be justified in Case 3 below. 
		Since the functions $h_1(\xi)$ and $h_2(\xi)$ are continuous on $\mathbb{R}^d\backslash\{0\}$ there exists constants $M_1,M_2$ such that $M_1^{-1} \leq h_1, h_2 \leq M_2$ on $C\leq |\xi|_{\vec{a}} \leq C_2$. 
		Choose $\delta_0 > 0$ such that $\delta_0 \leq \frac{C}{\max\{c_1,c_3\}C_22K}$ and let
		\begin{equation*}
		|\xi-\zeta|_{\vec{a}} \leq \delta_0 \tilde{h}(\xi) \leq \delta_0 \max\{c_1,c_3\}|\xi|_{\vec{a}} \leq \delta_0 \max\{c_1,c_3\} C_2.
		\end{equation*}
		Then
		\begin{align*}
		|\zeta|_{\vec{a}} &\leq K(|\xi|_{\vec{a}} + |\zeta-\xi|_{\vec{a}}) \leq K(C_2 + \delta_0 \max\{c_1,c_3\} C_2) 
		\leq K\left(C_2 + \frac{C}{2K}\right),
		\end{align*}
and
		\begin{align*}
		|\xi|_{\vec{a}} &\leq K(|\xi-\zeta|_{\vec{a}} + |\zeta|_{\vec{a}}) \quad \Leftrightarrow \\ |\zeta|_{\vec{a}} &\geq \frac{|\xi|_{\vec{a}}}{K} - |\xi-\zeta|_{\vec{a}} 
		\geq \frac{C}{K} - \delta_0 \max\{c_1,c_3\}C_2 
		\geq \frac{C}{2K}. 
		\end{align*}
		Thus, for $\widetilde{C} \leq \frac{C}{2K}$ and $\widetilde{C}_2 \geq  K\left(C_2 + \frac{C}{2K}\right)$ we have $\widetilde{C} \leq |\zeta|_{\vec{a}} \leq \widetilde{C}_2$. 
		By the same arguments as above there exists constants $\widetilde{M}_1, \widetilde{M}_2$ such that $\widetilde{M}_1^{-1} \leq h_1, h_2 \leq \widetilde{M}_2$ on $\widetilde{C} \leq |\zeta|_{\vec{a}} \leq \widetilde{C}_2$.
		Now choose $M=\max\{M_1,M_2,\widetilde{M}_1,\widetilde{M}_2\}$. Then
		\begin{equation*}
		\frac{\tilde{h}(\xi)}{\tilde{h}(\zeta)} = \frac{ \rho h_1(\xi) + (1-\rho)h_2(\xi)}{\rho h_1(\zeta) + (1-\rho)h_2(\zeta)} \leq \frac{\rho M + (1-\rho)M}{\rho M^{-1} + (1-\rho)M^{-1}} = M^2,
		\end{equation*}
		and similarly
		\begin{equation*}
		\frac{\tilde{h}(\xi)}{\tilde{h}(\zeta)} = \frac{ \rho h_1(\xi) + (1-\rho)h_2(\xi)}{\rho h_1(\zeta) + (1-\rho)h_2(\zeta)} \geq \frac{\rho M^{-1} + (1-\rho)M^{-1}}{\rho M + (1-\rho)M} = M^{-2}.
		\end{equation*} 
		
		\item[Case 3:] 
		Let $|\xi|_{\vec{a}} > C_2$ and choose $\delta_0 > 0$. By decreasing the value of $\delta_0$, if needed, we can assume that $\delta_0 c_3 \leq \frac{1}{2K}$. Let
		\begin{align*}
		|\xi-\zeta|_{\vec{a}} \leq \delta_0 h_2(\xi) \leq \delta_0 c_3 |\xi|_{\vec{a}}.
		\end{align*}
		Then
		\begin{align*}
		|\xi|_{\vec{a}} &\leq K(|\xi-\zeta|_{\vec{a}}+|\zeta|_{\vec{a}}) \quad \Leftrightarrow \nonumber \\
		|\zeta|_{\vec{a}} &\geq \tfrac{|\xi|_{\vec{a}}}{K} - |\xi-\zeta|_{\vec{a}} \geq ( \tfrac{1}{K} - \delta_0 c_3)|\xi|_{\vec{a}} 
		\geq (\tfrac{1}{K}-\tfrac{1}{2K})|\xi|_{\vec{a}} 
		\geq \tfrac{1}{2K} C_2.
		\end{align*}
		Since $C_2 \geq \frac{8}{3}K$ we have $|\zeta|_{\vec{a}} \geq \frac{4}{3}$. So, the $\d$-moderation of $h_2$ yields
		\begin{equation*}
		R_2^{-1} \leq \frac{\tilde{h}(\xi)}{\tilde{h}(\zeta)} = \frac{h_2(\xi)}{h_2(\zeta)} \leq R_2.
		\end{equation*}
	\end{itemize}
Thus, for $R = \max\{R_1,M,R_2\}$ we conclude that $\tilde{h}$ is d-moderate.
\end{proof}

Given a hybrid regulation function, we can construct a ''nice'' admissible covering of $\mathbb{R}^d\backslash\{0\}$ by open (anisotropic) balls. This will be considered in the following section.

\section{Construction of Structured Admissible Coverings}\label{sec:construction}
In this section we construct structured admissible coverings made up of open (anisotropic) balls in $(\mathbb{R}^d\backslash\{0\},\d, \d \xi)$.
We simplify the construction in the sense that we use a suitable collection of d-balls to cover $\mathbb{R}^d\backslash\{0\}$. Another simplification is that we choose the radius of a given ball in the cover as a suitable function of its center. More specifically, we will use a hybrid regulation function for this purpose. Another perhaps more technically involved approach based on group theory for obtaining structured coverings of $\R^d\backslash\{0\}$ is considered by F\"uhr and Voigtlaender in \cite{MR3345605}.

\begin{mylemma}\label{Lemma5}
Consider $(\mathbb{R}^d\backslash\{0\},\d,\d\xi)$ and let $\tilde{h}: \mathbb{R}^d\backslash\{0\} \to (0,\infty)$ be a hybrid regulation function. 
Pick $0<\delta < \delta_0$. Then
\begin{itemize}
	\item[(1)] there exist an admissible covering $\{B_{d}(\xi_j,\delta \tilde{h}(\xi_j))\}_{j\in J}$ of $\mathbb{R}^d\backslash\{0\}$ and a constant $0<\delta'<\delta$ such that $\{B_{d}(\xi_j,\delta'\tilde{h}(\xi_j))\}_{j\in J}$ are pairwise disjoint.
	\item[(2)] Any two admissible coverings
	\begin{equation*}
\{B_d(\xi_i,\delta_1\tilde{h}(\xi_i))\}_{i\in I} := \{\mathcal{A}_i\}_{i\in I} \quad \text{and} \quad \{B_d(\zeta_j,\delta_1\tilde{h}(\zeta_j))\}_{j\in J} := \{\mathcal{B}_j\}_{j\in J}
	\end{equation*}
	of the type considered in (1) satisfy $\sup_{i\in I} \# J(i) < \infty$ and $\sup_{j\in J} \# I(j) < \infty$, where
\begin{equation*}
J(i) := \{j \; | \; j \in J, \mathcal{A}_i \cap \mathcal{B}_j \ne \emptyset\}, \qquad I(j) := \{i \; | \; i \in I, \mathcal{B}_j \cap \mathcal{A}_i \ne \emptyset\}.
\end{equation*}
\end{itemize}
\end{mylemma}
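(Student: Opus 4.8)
The plan is to obtain part (1) by a maximal packing argument and part (2) by a volume-counting argument, using the $\d$-moderateness of $\tilde{h}$ established in Lemma \ref{lemma1} as the bridge in both. Throughout, let $R,\delta_0$ denote the moderateness constants of $\tilde{h}$ and let $K\geq 1$ be the quasi-triangle constant for $|\cdot|_{\vec{a}}$.

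For part (1), I would fix $\delta' := \delta/\bigl(2K(R^2+1)\bigr)$, so that $0<\delta'<\delta<\delta_0$, and consider the family of all subsets $S\subset\mathbb{R}^d\backslash\{0\}$ whose small balls $\{B_d(\xi,\delta'\tilde{h}(\xi))\}_{\xi\in S}$ are pairwise disjoint, ordered by inclusion. Every chain has its union as an upper bound, so Zorn's lemma produces a maximal such set $\{\xi_j\}_{j\in J}$; the small balls are disjoint by construction, and $J$ is countable since these disjoint open balls each carry positive Lebesgue measure and $\mathbb{R}^d$ is separable. To see that the large balls cover, take any $\eta$: maximality forces $B_d(\eta,\delta'\tilde{h}(\eta))$ to meet some $B_d(\xi_j,\delta'\tilde{h}(\xi_j))$, so there is $\omega$ with $\d(\eta,\omega)<\delta'\tilde{h}(\eta)$ and $\d(\omega,\xi_j)<\delta'\tilde{h}(\xi_j)$. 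Since $\delta'<\delta_0$, applying $\d$-moderateness at $\eta$ and at $\xi_j$ through the common point $\omega$ gives $\tilde{h}(\eta)\leq R^2\tilde{h}(\xi_j)$, whence the quasi-triangle inequality yields $\d(\eta,\xi_j)\leq K\delta'(\tilde{h}(\eta)+\tilde{h}(\xi_j))\leq K\delta'(R^2+1)\tilde{h}(\xi_j)<\delta\tilde{h}(\xi_j)$, i.e. $\eta\in B_d(\xi_j,\delta\tilde{h}(\xi_j))$.

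The step I would treat with the most care — and which I regard as the \emph{main obstacle} — is precisely this chaining: one cannot compare $\tilde{h}(\eta)$ to $\tilde{h}(\xi_j)$ directly without risking a circular estimate, but routing both comparisons through the intersection point $\omega$, whose quasi-distance to each center is controlled by the disjointness radius $\delta'<\delta_0$, decouples the two applications of moderateness and produces the clean factor $R^2$.

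For part (2), I would fix $i\in I$ and suppose $\mathcal{A}_i\cap\mathcal{B}_j\neq\emptyset$. The identical chaining argument, now with radius $\delta_1<\delta_0$, gives $R^{-2}\tilde{h}(\xi_i)\leq\tilde{h}(\zeta_j)\leq R^2\tilde{h}(\xi_i)$ together with $\d(\xi_i,\zeta_j)\leq K\delta_1(1+R^2)\tilde{h}(\xi_i)$. Consequently each disjoint sub-ball $B_d(\zeta_j,\delta_1'\tilde{h}(\zeta_j))$, where $\delta_1'$ is the disjointness radius supplied by (1), has radius at least $\delta_1'R^{-2}\tilde{h}(\xi_i)$ and lies inside a fixed dilate $B_d(\xi_i,C'\tilde{h}(\xi_i))$ with $C'=K\bigl(K\delta_1(1+R^2)+\delta_1'R^2\bigr)$. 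Because $(\mathbb{R}^d\backslash\{0\},\d,\d\xi)$ is of homogeneous type — concretely $|B_d(\xi,\rho)|=c_d\rho^d$ by the dilation relation \eqref{eq:conditionaninorm} — summing the volumes of these pairwise disjoint sub-balls contained in $B_d(\xi_i,C'\tilde{h}(\xi_i))$ bounds $\#J(i)$ by $(C'R^2/\delta_1')^d$, uniformly in $i$. Interchanging the roles of the two coverings bounds $\#I(j)$ in the same way, and specializing to $I=J$ with the single covering from (1) yields its finite overlap property, thereby confirming that the covering produced in (1) is genuinely admissible.
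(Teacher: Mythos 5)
Your proposal is correct, and its core machinery coincides with the paper's: a Zorn's-lemma maximal packing at a reduced radius, a chaining argument through a point of intersection that converts the $\d$-moderateness of $\tilde{h}$ into the covering property of the dilated balls, and a volume comparison to bound overlaps. The differences are organizational rather than conceptual. For part (1), the paper estimates the full second-order neighbourhood $U_\xi'$ (two chaining steps, giving the constant $\tilde{R}_1=K(1+2KR^2)$), whereas you chain only from the foreign center through the intersection point (one step, giving $2K(R^2+1)$); both yield the same covering conclusion. The more substantive divergence is in how admissibility and part (2) are handled: the paper proves the finite overlap property inside part (1) via the doubling property of Lebesgue measure (the $A^{2m}$, $A^{3m}$ estimates) and then simply cites \cite[Lemma 5]{Nielsen} for part (2), while you prove part (2) from scratch using the exact scaling $|B_d(\xi,\rho)|=c_d\rho^d$ furnished by \eqref{eq:conditionaninorm} in place of abstract doubling, and then recover the finite overlap of the covering in (1) as the diagonal case $I=J$. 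This reversal is legitimate, since your counting argument for (2) uses only the packing radius $\delta_1'$ and moderateness, never finite overlap as a hypothesis; the net effect is a self-contained treatment of the one item the paper outsources. One point both you and the paper treat lightly: for the balls to be subsets of $\mathbb{R}^d\backslash\{0\}$ (as the definition of an admissible covering of $\mathbb{R}^d\backslash\{0\}$ requires), one either interprets $B_d(\xi_j,\delta\tilde{h}(\xi_j))$ as a relative ball in $\mathbb{R}^d\backslash\{0\}$ or imposes the additional smallness $\delta c<1$ on $\delta$, as the paper does via the condition $\tilde{R}_1\delta c<1$; under the relative-ball reading your volume formula is unaffected and your argument goes through verbatim.
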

\begin{proof} 
The proof of Lemma \ref{Lemma5} is a straightforward adaptation of \cite[Lemma 5]{Nielsen}. However, we include the proof of (1) for the sake of completeness.
Fix $0<\delta < \delta_0$ and let $U_\xi := B_d(\xi,\delta \tilde{h}(\xi))$. Define $\nu(\xi) := \{\zeta \in \mathbb{R}^d\backslash\{0\} \; | \; U_\xi \cap U_\zeta \ne \emptyset\}$, and let $U_\xi' = \cup_{\zeta\in \nu(\xi)} U_\zeta$. Suppose $\eta \in U_\xi'$. We would like to estimate $\d(\xi,\eta)$. Notice that $\eta \in U_\zeta$ for some $U_\zeta$ with $U_\xi \cap U_\zeta \ne \emptyset$. Pick a point $\eta_0 \in U_\xi \cap U_\zeta$. Using similar arguments as in \cite[Lemma 5]{Nielsen} we find that 
\begin{align}\label{align:estimate}
\d(\xi,\eta) \leq \delta K \tilde{h}(\xi)(1+2KR^2),
\end{align}
where $K$ is the constant in the quasi triangle inequality for $\d$ and $R$ is the constant associated with $\tilde{h}$. Note that, since $\eta$ was chosen as an arbitrary point in $U_{\xi}'$ and $\tilde{h}$ satisfies $\tilde{h}(\xi) \leq c|\xi|_{\vec{a}}$ for some $c>0$ and all $\xi\in\mathbb{R}^d\backslash\{0\}$ (see Definition \ref{def:hfunction}) it follows that $U_\xi' \subset B_d(\xi,\tilde{R}_1\delta \tilde{h}(\xi)) \subseteq B_d(\xi,\tilde{R}_1\delta c |\xi|_{\vec{a}})$ for $\tilde{R}_1 := K(1+2KR^2)$ and for $\delta > 0$ chosen such that $\tilde{R}_1 \delta c < 1$.
Using Zorn's Lemma, choose a maximal set $\{\xi_j\}_{j\in J} \subset \mathbb{R}^d\backslash\{0\}$ such that $\mathcal{C}_1 := \{V_{\xi_j}\}_{j\in J}$ are pairwise disjoint, with $V_\xi := B_d(\xi,\tfrac{\delta}{\tilde{R}_1}\tilde{h}(\xi))$. By scaling the radii of the balls contained in $\mathcal{C}_1$ by a factor $\tilde{R}_1$ and choosing $\delta > 0$ such that $\tilde{R}_1 \delta c < 1$, it follows by \eqref{align:estimate} that $\mathcal{Q} = \{B_d(\xi_j,\delta \tilde{h}(\xi_j))\}_{j\in J}$ covers $\mathbb{R}^d\backslash\{0\}$.
What remains is to show that for $Q, \# \widetilde{j}$ is uniformly bounded for all $i \in J$. Let $m$ be a constant satisfying $2^m \geq \tilde{R}_1$ and let $\mu$ denote the Lebesgue measure. Fix an $i \in J$. Since $\mu$ satisfies a doubling condition we have
\begin{align*}
\mu\{B_d(\xi_j,\tfrac{\delta}{\tilde{R}_1}\tilde{h}(\xi_j))\} \geq A^{-2m}\mu \{B_d(\xi_j,\tilde{R}_1\delta \tilde{h}(\xi_j))\} \geq A^{-2m}\mu \{U_{\xi_i}\},
\end{align*}
for all $j\in\widetilde{j}$. Now, since $\mathcal{C}_1$ are pairwise disjoint, we have
\begin{equation*}
\# \widetilde{j} \leq \sup_{j\in\widetilde{j}} \frac{\mu\{B_d(\xi_i,\tilde{R}_1\delta \tilde{h}(\xi_i))\}}{\mu\{B_d(\xi_j,\tfrac{\delta}{\tilde{R}_1}\tilde{h}(\xi_j))\}} 
\leq A^{2m} \frac{\mu\{B_d(\xi_i,\tilde{R}_1\delta \tilde{h}(\xi_i))\}}{\mu\{U_{\xi_i}\}} 
\leq A^{3m}.
\end{equation*}
This proves part (1). We refer the reader to \cite[Lemma 5]{Nielsen} for a proof of (2). 
\end{proof}

Lemma \ref{Lemma5} states that, given a hybrid regulation function there exists an admissible covering of $\mathbb{R}^d\backslash\{0\}$. What remains is to show that the coverings are indeed structured. We have the following result. 

\begin{myprop}\label{thm:sdc}
Let the anisotropic norm $|\cdot|_{\vec{a}}:\mathbb{R}^d\backslash\{0\} \to (0,\infty)$ be given as in Definition \ref{def:aninorm} and suppose the quasi-distance $\d$ is induced by $|\cdot|_{\vec{a}}$. Let $\tilde{h}$ be a hybrid regulation function. Then
\begin{itemize}
	\item[(1)] the family $\mathcal{Q}=\{B_{d}(\xi_j,\delta \tilde{h}(\xi_j))\}_{j\in J}$ given in Lemma \ref{Lemma5} is a structured admissible covering of $\mathbb{R}^d\backslash\{0\}$. Moreover, the covering is countable if the topology induced by $\d$ is finer than the Euclidean topology on $\mathbb{R}^d$.
	\item[(2)] Any two such families of structured admissible coverings are equivalent in the sense of \cite[Definition 5]{Nielsen}. That is, $\tilde{h}$ determines exactly one equivalence class of structured admissible coverings.  
\end{itemize}
\end{myprop}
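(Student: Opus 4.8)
The plan is to realize each covering ball as the affine image of a single fixed reference ball under an anisotropic dilation, and then read the three requirements of Definition \ref{def:sac} directly off the homogeneity relation \eqref{eq:conditionaninorm} and the $\d$-moderateness of $\tilde{h}$ (Lemma \ref{lemma1}). Write $r_j := \delta\tilde{h}(\xi_j)$, fix a base point $w$ with $|w|_{\vec{a}}>1$ (so that $B_d(w,1)$ misses the origin), and set $A_j := D_{\vec{a}}(r_j)$, $\;b_j := \xi_j - D_{\vec{a}}(r_j)w$, $\;T_j := A_j\cdot + b_j$. Using $|D_{\vec{a}}(s)\eta|_{\vec{a}} = s|\eta|_{\vec{a}}$ one checks that $T_j\big(B_d(w,s)\big) = B_d(\xi_j, s\,r_j)$ for every $s>0$; in particular, taking the reference sets $P := B_d(w,\mu)$ and $Q := B_d(w,1)$ with a suitable $\mu\in(0,1)$ gives $P_j = B_d(\xi_j,\mu r_j)$ and $Q_j = B_d(\xi_j,r_j) = \mathcal{Q}$, both bounded open subsets of $\mathbb{R}^d\backslash\{0\}$ with $P$ compactly contained in $Q$. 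Each $Q_j$ avoids the origin because $r_j\le \delta c\,|\xi_j|_{\vec{a}} < |\xi_j|_{\vec{a}} = \d(\xi_j,0)$, where $c$ is the constant with $\tilde{h}\le c|\cdot|_{\vec{a}}$ (Definition \ref{def:hfunction}) and $\delta$ is small as in Lemma \ref{Lemma5}. That $\{Q_j\}$ covers and has finite overlap is exactly Lemma \ref{Lemma5}(1); taking the maximal disjoint family of Lemma \ref{Lemma5}(1) at the slightly smaller base radius $\tfrac{\mu\delta}{\tilde{R}_1}\tilde{h}$ makes $\{P_j\}$ cover as well, and its finite overlap follows from the same doubling argument.

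The heart of the matter is the bounded-geometry estimate \eqref{prop:eq:bounded}. Since the $A_j$ are powers of the one-parameter dilation group, $A_k^{-1}A_j = D_{\vec{a}}(r_j/r_k) = \operatorname{diag}\!\big((\tilde{h}(\xi_j)/\tilde{h}(\xi_k))^{a_1},\dots,(\tilde{h}(\xi_j)/\tilde{h}(\xi_k))^{a_d}\big)$, so its $\ell_\infty(\mathbb{R}^d\times\mathbb{R}^d)$-norm equals $\max_i(\tilde{h}(\xi_j)/\tilde{h}(\xi_k))^{a_i}$, and \eqref{prop:eq:bounded} reduces to proving $\tilde{h}(\xi_j)\asymp\tilde{h}(\xi_k)$ whenever $Q_j\cap Q_k\ne\emptyset$. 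This is where Lemma \ref{lemma1} enters: picking $\eta\in Q_j\cap Q_k$ and using the quasi-triangle inequality gives $\d(\xi_j,\xi_k)\le K(r_j+r_k) = K\delta(\tilde{h}(\xi_j)+\tilde{h}(\xi_k))$; assuming without loss of generality $\tilde{h}(\xi_k)\le\tilde{h}(\xi_j)$, this is at most $2K\delta\,\tilde{h}(\xi_j)$, which is $\le\delta_0\tilde{h}(\xi_j)$ once $\delta$ is shrunk so that $2K\delta\le\delta_0$. The $\d$-moderateness of $\tilde{h}$ then yields $R^{-1}\le\tilde{h}(\xi_j)/\tilde{h}(\xi_k)\le R$, whence $\norm{A_k^{-1}A_j}_{\ell_\infty}\le R^{\alpha_2}$, with $\alpha_2$ the largest entry of $\vec{a}$. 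This verifies \eqref{prop:eq:bounded} and completes the proof that $\mathcal{Q}$ is a structured admissible covering, apart from countability.

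For countability I would argue measure-theoretically. The family $\mathcal{C}_1=\{V_{\xi_j}\}_{j\in J}$ of Lemma \ref{Lemma5}(1) is pairwise disjoint, and since $\xi\mapsto|\xi-\xi_j|_{\vec{a}}$ is continuous away from $\xi_j$, each $\d$-ball $V_{\xi_j}$ is Euclidean-open and nonempty, hence of strictly positive Lebesgue measure (this is exactly where the hypothesis that the $\d$-topology refines the Euclidean one guarantees nondegeneracy of the balls). As $(\mathbb{R}^d\backslash\{0\},\d\xi)$ is $\sigma$-finite, any pairwise disjoint family of sets of positive measure is at most countable — decompose the space into finite-measure pieces and bound, for each $m,n$, the number of members meeting the $n$-th piece in measure exceeding $1/m$ — so $J$ is countable.

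Part (2) uses the same two ingredients. For two coverings $\{\mathcal{A}_i\}_{i\in I}$ and $\{\mathcal{B}_j\}_{j\in J}$ as in Lemma \ref{Lemma5}, the finite mutual-overlap bounds $\sup_i\#J(i)<\infty$ and $\sup_j\#I(j)<\infty$ are precisely Lemma \ref{Lemma5}(2); and if $\mathcal{A}_i\cap\mathcal{B}_j\ne\emptyset$, the argument of the second paragraph (now comparing the two sets of centers) gives $\tilde{h}(\xi_i)\asymp\tilde{h}(\zeta_j)$ and hence a uniform bound on the mixed matrix norms. These are exactly the two conditions defining equivalence in the sense of \cite{Nielsen}, so all coverings produced from the same $\tilde{h}$ are mutually equivalent, i.e.\ $\tilde{h}$ determines a single equivalence class. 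I expect the main obstacle to be the second paragraph: correctly reducing \eqref{prop:eq:bounded} to $\d$-moderateness via the dilation-group identity $A_k^{-1}A_j = D_{\vec{a}}(r_j/r_k)$ and tracking the smallness of $\delta$ needed to invoke Lemma \ref{lemma1}; once that reduction is in hand, the covering, overlap, and equivalence properties are bookkeeping built on Lemma \ref{Lemma5}.
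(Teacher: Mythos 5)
Your proposal is correct and follows essentially the same route as the paper: realize each covering ball as $T_j(Q)$ with $A_j = D_{\vec{a}}(\delta\tilde{h}(\xi_j))$ and translation chosen so that $T_j$ maps the reference center to $\xi_j$, use the homogeneity \eqref{eq:conditionaninorm} to identify $T_j\bigl(B_d(w,s)\bigr) = B_d(\xi_j,s\delta\tilde{h}(\xi_j))$, and derive \eqref{prop:eq:bounded} from the $\d$-moderateness of $\tilde{h}$ via the quasi-triangle inequality (intersecting balls force $\tilde{h}(\xi_j)\asymp\tilde{h}(\xi_k)$, hence $A_k^{-1}A_j = D_{\vec{a}}(\tilde{h}(\xi_j)/\tilde{h}(\xi_k))$ is uniformly bounded), with Lemma \ref{Lemma5} supplying covering, finite overlap, and part (2). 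The only cosmetic deviations are that the paper takes the Lemma \ref{Lemma5} balls as the inner family $\{P_j\}$ and doubles the radius to get $\{Q_j\}$, whereas you shrink instead and re-run Lemma \ref{Lemma5} at radius $\mu\delta$, and the paper proves countability by injecting $J$ into rational Euclidean balls placed inside the pairwise disjoint balls $V_{\xi_j}$, whereas you count disjoint sets of positive measure in a $\sigma$-finite space --- both arguments are valid.
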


\begin{proof}
Essentially, we can just follow the proof of \cite[Theorem 3]{Nielsen}, but for the sake of convenience we will give the details here. Set $\tau(\cdot) := |\cdot|_{\vec{a}}$, 
let $\delta < \delta_0$, and let $\{\xi_j\}_{j\in J} \subset \mathbb{R}^d\backslash\{0\}$ be the points given in Lemma \ref{Lemma5}. Choose $p_0$ such that the balls $P:= B_d(p_0,1)$ and $Q:=B_d(p_0,2)$ satisfy $P\cap \{0\} = \emptyset$ and $Q\cap \{0\} = \emptyset$. Let
\begin{equation*}
T_j\xi = A_j\xi + (\xi_j - p_0), \quad \text{where} \quad A_j = D_{\vec{a}}(\delta \tilde{h}(\xi_j)).
\end{equation*}
Since $\tau$ satisfies \eqref{eq:conditionaninorm} we have
\begin{equation*}
\tau(T_j^{-1}(\xi-p_0)) = \tau \left( D_{\vec{a}} \left( \frac{1}{\delta \tilde{h}(\xi_j)} \right) ((\xi-p_0)-(\xi_j-p_0))\right) = \frac{1}{\delta \tilde{h}(\xi_j)} \tau(\xi - \xi_j).
\end{equation*}
So
\begin{align*}
P_{j} = \{\xi : \tau(T_j^{-1}(p_0-\xi)) < 1\} = \{\xi : \tau(\xi-\xi_j) < \delta \tilde{h}(\xi_j)\} = B_d(\xi_j,\delta\tilde{h}(\xi_j)).
\end{align*}
Now, using Lemma \ref{Lemma5} it is not difficult to show that $\{P_{j}\}_{j\in J}$ and $\{Q_{j}\}_{j\in J}$ are admissible coverings of $\mathbb{R}^d\backslash\{0\}$. What remains is to verify that \eqref{prop:eq:bounded} holds. Using the d-moderation of $\tilde{h}$ it follows that whenever $B_d(\xi_j,\delta \tilde{h}(\xi_j)) \cap B_d(\xi_i,\delta \tilde{h}(\xi_i)) \ne \emptyset$, then $\tilde{h}(\xi_j) \asymp \tilde{h}(\xi_i)$ uniformly in $j$ and $i$. Thus,
\begin{equation*}
A_j^{-1}A_i = D_{\vec{a}}\left(\delta \tilde{h}(\xi_j)\right)^{-1}D_{\vec{a}}\left(\delta \tilde{h}(\xi_i)\right) = D_{\vec{a}}\left(\delta \tilde{h}(\xi_j)^{-1}\delta\tilde{h}(\xi_i)\right),
\end{equation*}
and \eqref{prop:eq:bounded} holds. To prove countability of $\mathcal{Q}$ when the topology induced by $\d$ is finer than the standard topology, we associate an Euclidean ball with rational radius and center to each set in $\mathcal{Q}$ in such a way that these rational balls are pairwise disjoint. We refer the reader to \cite[Theorem 3]{Nielsen} for a proof of (2). 
\end{proof}

Proposition \ref{thm:sdc} ensures that we can construct a BAPU corresponding to the admissible covering given in Lemma \ref{lemma1}, provided that $\tilde{h}$ is a hybrid regulation function in the sense of Definition \ref{def:hfunction}.

\section{Homogeneous Decomposition Spaces}\label{sec:hmspaces}
Based on coverings of the type considered in Section \ref{sec:construction}, we now give the following definition of homogeneous decomposition spaces. 
Note that, by a $\mathcal{Q}$-moderate weight $w$ on $\mathbb{R}^d\backslash\{0\}$ we mean a strictly positive function satisfying $w(\xi) \leq C w(\zeta)$ for some $C>0$, all $\xi,\zeta \in Q_j$ and all  $j\in J$. 

\begin{mydef}\label{def:complete}
Assume $\mathcal{Q} := \{Q_j\}_{j\in J}$ is a structured admissible covering and let $\{\phi_j^2\}_{j\in J}$ be a corresponding BAPU for $\mathcal{Q}$. Let $\tilde{h}(\xi_j)^\beta, \beta \in \mathbb{R}$ be a $\mathcal{Q}$-moderate weight on $J$ induced by a hybrid regulation function $\tilde{h}$. For $0<p, q\leq \infty$ and $\beta \in \mathbb{R}$ we let $\dot{M}_{p,q}^\beta(\tilde{h})$ consists of those distributions $f\in\mathcal{S}'\backslash\mathcal{P}$ satisfying
\begin{equation*}
	\norm{f}_{\dot{M}_{p,q}^\beta(\tilde{h})} =
	\left( \sum_{j\in J} \left(\tilde{h}(\xi_j)^{\beta} \norm{\phi_j^2(D)f}_{L_p}\right)^q \right)^{1/q} < \infty,
\end{equation*}
with the usual modification if $q=\infty$.
\end{mydef}

In Proposition \ref{prop:complete} we state some fundamental properties of homogeneous decomposition spaces. We will see that the growth condition imposed on the hybrid regulation function from Definition \ref{def:hfunction} is needed to ensure completeness of the spaces. Before presenting the proposition we recall that $\mathcal{S}_0(\mathbb{R}^d)$ is a closed subspace of the Schwartz class $\mathcal{S}(\mathbb{R}^d)$, consisting of all functions $\theta\in\mathcal{S}$ such that
\begin{equation}\label{eq:seminormS0}
\norm{\theta}_M := \sup_{|\beta|\leq M} \sup_{\xi\in\mathbb{R}^d} |\partial^\beta \hat{\theta}(\xi)|(|\xi|^M + |\xi|^{-M}) < \infty \quad \text{for any} \; M\in \mathbb{N}. 
\end{equation}
Moreover, $\mathcal{S}'\backslash\mathcal{P}$ is the set of all continuous linear functionals on $\mathcal{S}_0(\mathbb{R}^d)$. 

\begin{myprop}\label{prop:complete}
	Let $\tilde{h}$ be a hybrid regulation function. For $0<p, q\leq \infty$ and $\beta \in \mathbb{R}$ we have
	\begin{enumerate}
		\item The continuous embeddings
		\begin{equation}\label{eq:embeds} \mathcal{S}_0 \hookrightarrow \dot{M}_{p,q}^\beta(\tilde{h}) \hookrightarrow \mathcal{S}'\backslash \mathcal{P}. \end{equation}
		\item ${\dot{M}_{p,q}^\beta}(\tilde{h})$ is a quasi-Banach space (Banach space if $1\leq p,q$).
		\item If $0<p<\infty$ and $0<q<\infty$ then the space $\mathcal{S}_0(\mathbb{R}^d)$ is dense in $\dot{M}_{p,q}^\beta(\tilde{h})$.
	\end{enumerate}
\end{myprop}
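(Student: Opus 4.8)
All three assertions are the structural ``boilerplate'' of the theory, and the organizing principle is to regard $\dot{M}_{p,q}^\beta(\tilde{h})$ as a retract of the weighted sequence space in which $f$ is encoded by the $\ell_q$-sequence $\big(\tilde{h}(\xi_j)^{\beta}\norm{\phi_j^2(D)f}_{L_p}\big)_{j\in J}$. Everything is driven by one estimate: for every $\theta\in\mathcal{S}_0$ and every $N\in\mathbb{N}$ there is a seminorm $\norm{\cdot}_{M(N)}$ of $\mathcal{S}_0$ with
\[
\norm{\phi_j^2(D)\theta}_{L_p}\le C_N\,\min\{|\xi_j|_{\vec{a}}^{N},|\xi_j|_{\vec{a}}^{-N}\}\,\norm{\theta}_{M(N)},\qquad j\in J.
\]
I would prove this first and then read off (1)--(3). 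For the estimate itself, note that on $\text{supp}(\phi_j^2)\subset Q_j=B_d(\xi_j,\delta\tilde{h}(\xi_j))$ the $\d$-moderateness of $\tilde{h}$ (Lemma \ref{lemma1}) gives $|\xi|_{\vec{a}}\asymp|\xi_j|_{\vec{a}}$, while $\hat{\theta}\in\mathcal{S}_0$ vanishes to infinite order at $0$ and faster than any power at $\infty$ by \eqref{eq:seminormS0}; hence $\phi_j^2\hat{\theta}$ and all its derivatives are bounded by $\norm{\theta}_{M(N)}$ times the asserted power of $|\xi_j|_{\vec{a}}$, and the integration-by-parts argument from the end of the proof of Proposition \ref{prop1} converts these frequency-side bounds into the claimed $L_p$-bound.

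The convergence of the weighted series is the main obstacle. The growth conditions \eqref{eq:h1}--\eqref{eq:h2} guarantee that $\tilde{h}(\xi_j)^{\pm\beta}$ grows at most polynomially in $|\xi_j|_{\vec{a}}$ at both ends, and a doubling/volume count as in the proof of Lemma \ref{Lemma5} bounds the number of centres $\xi_j$ in each anisotropic shell $\{2^{k}\le|\xi|_{\vec{a}}<2^{k+1}\}$. Summing the pointwise estimate against the weight then yields $\sum_{j}\big(\tilde{h}(\xi_j)^{\pm\beta}\norm{\phi_j^2(D)\theta}_{L_p}\big)^q\le C\,\norm{\theta}_{M(N)}^q$; it is exactly here that the growth hypotheses on the hybrid regulation function from Definition \ref{def:hfunction} are indispensable.

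Granting the estimate, the left embedding in \eqref{eq:embeds} is the case $+\beta$ of the displayed sum. For the right embedding I would use $\sum_j\phi_j^2=1$ and the self-adjointness of the real multipliers $\phi_j^2(D),\widetilde{\psi}_j(D)$ (where $\widetilde{\psi}_j\in\mathcal{S}$ equals $1$ on $\text{supp}(\phi_j^2)$) to write, for $\theta\in\mathcal{S}_0$,
\[
\langle f,\theta\rangle=\sum_{j\in J}\langle\phi_j^2(D)f,\widetilde{\psi}_j(D)\theta\rangle,
\]
estimate each term by Hölder in $x$ and sum by Hölder in $j$ with exponents $(q,q')$; the first factor is $\norm{f}_{\dot{M}_{p,q}^\beta(\tilde{h})}$ and the second is finite by the estimate with $p'$ and $-\beta$. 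For $0<p<1$ the $(p,p')$-Hölder step is replaced by Nikolskii's inequality for the band-limited function $\phi_j^2(D)f$, producing a harmless factor $|Q_j|^{1/p-1}\asymp\tilde{h}(\xi_j)^{d(1/p-1)}$ absorbed into the same weight bookkeeping; this yields $|\langle f,\theta\rangle|\le C\norm{f}_{\dot{M}_{p,q}^\beta(\tilde{h})}\norm{\theta}_{M(N)}$, i.e. continuity of the inclusion into $\mathcal{S}'\backslash\mathcal{P}$. Assertion (2) then follows from (1): a Cauchy sequence $(f_n)$ is Cauchy in the complete space $\mathcal{S}'\backslash\mathcal{P}$, so $f_n\to f$ there; for each fixed $j$, $\tilde{h}(\xi_j)^{\beta}\norm{\phi_j^2(D)(f_n-f_m)}_{L_p}\le\norm{f_n-f_m}_{\dot{M}_{p,q}^\beta(\tilde{h})}$ shows $(\phi_j^2(D)f_n)_n$ is $L_p$-Cauchy with limit $\phi_j^2(D)f$ (by continuity of $\phi_j^2(D)$ on $\mathcal{S}'\backslash\mathcal{P}$), and Fatou's lemma on the $\ell_q(L_p)$-sum gives both $f\in\dot{M}_{p,q}^\beta(\tilde{h})$ and $\norm{f_n-f}\to0$. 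The (quasi-)triangle inequality, hence the Banach/quasi-Banach dichotomy, is inherited from $\ell_q(L_p)$.

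For (3), assume $0<p,q<\infty$. First, the finite partial sums $S_Ff:=\sum_{j\in F}\phi_j^2(D)f$ satisfy $\norm{f-S_Ff}_{\dot{M}_{p,q}^\beta(\tilde{h})}\to0$ as $F\uparrow J$: applying $\phi_k^2(D)$ and using $\phi_k^2\phi_j^2=0$ unless $j\in\tilde{k}$, together with the finite-overlap property and the multiplier bound \eqref{eq:bandlimited}, bounds $\norm{f-S_Ff}$ by the tail $\big(\sum_{j\notin F'}(\tilde{h}(\xi_j)^{\beta}\norm{\phi_j^2(D)f}_{L_p})^q\big)^{1/q}$ over a slightly enlarged finite set $F'$, which vanishes because $q<\infty$. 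Second, each $S_Ff$ is band-limited to a compact set $K\subset\mathbb{R}^d\backslash\{0\}$, on which the $\dot{M}_{p,q}^\beta(\tilde{h})$-norm is equivalent to $\norm{\cdot}_{L_p}$; hence it suffices to approximate $S_Ff$ in $L_p$ by functions of $\mathcal{S}_0$ with frequency support in a fixed compact subset of $\mathbb{R}^d\backslash\{0\}$. I would do this by frequency mollification: for a smooth approximate identity $\kappa_\epsilon$ supported in a small ball, $\mathcal{F}^{-1}\big(\widehat{S_Ff}*\kappa_\epsilon\big)=S_Ff\cdot m_\epsilon$ is smooth with frequency support bounded away from $0$, hence lies in $\mathcal{S}_0$, while $m_\epsilon\to1$ locally uniformly with $\sup_\epsilon\norm{m_\epsilon}_{L_\infty}<\infty$, so dominated convergence gives $S_Ff\cdot m_\epsilon\to S_Ff$ in $L_p$. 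Here $q<\infty$ is used in the truncation and $p<\infty$ in the dominated-convergence step, accounting for both hypotheses in (3).
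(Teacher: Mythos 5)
Your proposal is correct and follows essentially the same route as the paper's proof in Appendix \ref{sec:appB}: your driving estimate $\norm{\phi_j^2(D)\theta}_{L_p}\le C_N\min\{|\xi_j|_{\vec{a}}^{N},|\xi_j|_{\vec{a}}^{-N}\}\norm{\theta}_{M(N)}$ combined with a packing count over the centres $\xi_j$ is exactly the paper's argument for $\mathcal{S}_0\hookrightarrow\dot{M}_{p,q}^\beta(\tilde{h})$ (where the growth hypotheses \eqref{eq:h1}--\eqref{eq:h2} enter), the dual pairing with H\"older/Nikolskij gives the embedding into $\mathcal{S}'\backslash\mathcal{P}$, completeness follows via Fatou, and density follows by truncation plus frequency mollification, just as in the paper. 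The only cosmetic difference is in the right-hand embedding, where the paper first reduces to $q=\infty$ via $\dot{M}_{p,q}^\beta(\tilde{h})\hookrightarrow\dot{M}_{p,\infty}^\beta(\tilde{h})$ and pairs against $\norm{\theta}_{\dot{M}_{1,1}^{-\beta+d/p}(\tilde{h})}$, which also covers the range $0<q<1$ where your $(q,q')$-H\"older step must be replaced by the embedding $\ell_q\hookrightarrow\ell_1$.
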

The proof of Proposition \ref{prop:complete} can be found in Appendix \ref{sec:appB}. 

\section{Distributional convergence}\label{sec:conv}
In this section we consider expansions of any tempered distribution $f$ in terms of the system from Proposition \ref{prop1} (ii). 
We begin by defining an ordering on the index set $J$. Since $J$ is countable we can assume $J = \mathbb{Z}$. We define an ordering on $J=\mathbb{Z}$ by associating $\mathbb{Z}^+$ with $J_2$, where
\begin{equation*}
J_2 := \left\{j \in J : B_d(\xi_j,\delta \tilde{h}(\xi_j)) \cap B_d\left(0,C\right)=\emptyset\right\},
\end{equation*}
with the value of $C$ specified in the proof of Lemma \ref{lemma1}. Put  $J_1 = J\backslash J_2$ and associate $\mathbb{Z}^-$ with $J_1$.
We shall in the rest of this paper assume that, for an affine transformation $T_j = A_j\cdot + b_j$, the matrix $A_j$ is given as $A_j := D_{\vec{a}}(\delta \tilde{h}(\cdot))$, where $D_{\vec{a}}(\cdot)$ is defined in Section \ref{sec:background}. \\
 
The following lemma provides a universal decomposition of tempered distributions relative to the system from Proposition \ref{prop1} (ii). 

\begin{mylemma}\label{lemma:konvergens}
Let $\mathcal{Q} := \{B_d(\xi_j,\delta\tilde{h}(\xi_j))\}_{j\in J}$ be a structured admissible covering of $\mathbb{R}^d\backslash\{0\}$ and let the system $\{\phi_j\}_{j\in J} \subset \mathcal{S}(\mathbb{R}^d)$ be defined as in Proposition \ref{prop1} (ii).
For any tempered distribution $f\in \mathcal{S}'(\mathbb{R}^d)$ we have
\begin{equation}\label{eq:series}
f = \sum_{j\in J_1} \check{\phi}^2_j * f + \sum_{j\in J_2} \check{\phi}^2_j * f,
\end{equation}
where the convergence of the series are in $\mathcal{S}'/\mathcal{P}$. 
To be more precise, this means there exists a constant $p$ depending only on the order of the distribution $\hat{f}$, a sequence of polynomials $\{P_k\}_{k=1}^\infty \subset \mathcal{P}$ with $\deg P_k \leq p$, and $P\in\mathcal{P}$, such that
\begin{equation*}
f = \lim_{k,K\to \infty} \left ( \sum_{j=-k}^K \check{\phi}_j^2 * f + P_k \right) + P,
\end{equation*}
where the convergence of the series is in $\mathcal{S}'$. 
\end{mylemma}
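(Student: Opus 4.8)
The plan is to establish the decomposition \eqref{eq:series} by separating the two sums according to the frequency behaviour near the origin ($J_1$, low frequencies) and away from the origin ($J_2$, high frequencies), since these require different convergence arguments. The identity $\sum_j \phi_j^2 = 1$ on $\mathbb{R}^d\backslash\{0\}$ from Proposition \ref{prop1} (ii) suggests that $\hat f = \sum_j \phi_j^2 \hat f$ formally, so that $f = \sum_j \check\phi_j^2 * f$; the content of the lemma is to make this rigorous in $\mathcal{S}'/\mathcal{P}$, accounting for the excluded frequency $0$. I would first fix $f \in \mathcal{S}'$ and recall that $\hat f$ is a tempered distribution of some finite order; near the origin $\hat f$ acts on test functions only through finitely many derivatives, which is precisely the source of the polynomial ambiguity (since a distribution supported at $\{0\}$ is a finite linear combination of derivatives of $\delta$, whose inverse Fourier transforms are polynomials).

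For the \emph{high-frequency sum} over $J_2$, the plan is to show convergence directly in $\mathcal{S}'$. Each $\check\phi_j^2 * f$ is a well-defined tempered distribution because $\phi_j^2 \in \mathcal{S}$ with support in $Q_j = B_d(\xi_j, \delta\tilde h(\xi_j))$ bounded away from $0$. For a fixed test function $\theta \in \mathcal{S}$, I would estimate $\langle \check\phi_j^2 * f, \theta\rangle$ using the finite-order bound on $\hat f$ together with the Schwartz decay of $\theta$ and the growth of the covering radii controlled by $\tilde h(\xi_j) \leq c|\xi_j|_{\vec a}$; the finite overlap property from Definition \ref{def:sac} keeps the number of active terms at each scale uniformly bounded, giving absolute summability and hence convergence to $\sum_{j\in J_2}\check\phi_j^2 * f$ in $\mathcal{S}'$. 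Here no polynomial correction is needed since the supports stay away from the origin.

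For the \emph{low-frequency sum} over $J_1$, the plan is to correct each partial sum by a polynomial. The key point is that although $\check\phi_j^2 * f$ itself is a genuine tempered distribution, the partial sums $\sum_{j=-k}^{0}\check\phi_j^2 * f$ need \emph{not} converge in $\mathcal{S}'$ as $k\to\infty$, because the frequency supports accumulate at $0$ where $\hat f$ may have a singular (distributional) part. To handle this I would test against $\theta$, split the pairing using a Taylor expansion of $\hat\theta$ at the origin of order $p$ (the order of $\hat f$ near $0$), and subtract the corresponding finite Taylor polynomial; the difference is controlled by the vanishing moments and yields a convergent tail, while the subtracted piece is exactly a polynomial $P_k$ in the physical variable. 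Taking $k\to\infty$ produces the limiting polynomial $P$, which accounts for the quotient modulo $\mathcal{P}$.

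\textbf{The main obstacle} I expect is the low-frequency analysis: making precise the claim that the obstruction to $\mathcal{S}'$-convergence of the $J_1$-sum is purely polynomial, with degree bounded by the order of $\hat f$ at the origin. This requires carefully quantifying how $\hat f$ acts on the shrinking frequency neighbourhoods of $0$, choosing $p$ as the local order, and verifying that the Taylor-remainder estimate combined with the decay of the anisotropic radii $\delta\tilde h(\xi_j)$ gives genuine summability after the polynomial subtraction. The interplay between the anisotropic norm estimates \eqref{eq:anormJ1}–\eqref{eq:anormJ2} for small frequencies and the lower bound $c_0|\xi|_{\vec a}^r \leq h_1(\xi)$ from \eqref{eq:h1} will be essential to control the geometry of the covering near $0$ and to ensure the construction of $\{P_k\}$ and $P$ with $\deg P_k \leq p$ is consistent.
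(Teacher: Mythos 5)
Your overall architecture matches the paper's proof: fix the finite order $m$ of $\hat f$, show the high-frequency sum over $J_2$ converges outright in $\mathcal{S}'$ (using that $\tilde h(\xi_j)$ is bounded below on $J_2$, so the derivatives of $\phi_j^2$ are uniformly controlled and the terms are summable), and treat the low-frequency sum over $J_1$ with polynomial corrections, identifying the final polynomial $P$ via a support argument at the origin. The one genuine difference is the low-frequency step. The paper does \emph{not} Taylor-expand the test function; instead it shows that for every multi-index $\gamma$ with $|\gamma|=p+1$ the series $\sum_{j\in J_1}\partial^\gamma(\check\phi_j^2*f)$ converges in $\mathcal{S}'$ (equivalently, $\sum_{j\in J_1}\xi^\gamma\phi_j^2\hat f$ converges, the factor $\xi^\gamma$ supplying the decay near $0$), and then invokes the quoted Proposition \ref{prop:distrib} of Peetre to conclude the existence of the correcting polynomials $P_k$ with $\deg P_k\le p$. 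Your Taylor-subtraction argument is essentially an inline proof of that proposition: the monomial factor $\xi^\gamma$ in the paper plays exactly the role of your remainder bound $|R_p\hat\theta(\xi)|\lesssim|\xi|^{p+1}$. Citing the proposition keeps the proof short and isolates cleanly where the polynomials come from; your route makes the $P_k$ explicit but obliges you to carry out the remainder estimates by hand.

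Two points in your plan need tightening. First, your identification of $p$ as ``the order of $\hat f$ near $0$'' is too optimistic: the Taylor remainder must compensate not only the order $m$ of $\hat f$ but also the blow-up of the derivatives of $\phi_j^2$ near the origin, namely $\sup_{|\beta|\le m}|\partial^\beta\phi_j^2|\lesssim \norm{A_j^{-1}}^{m}\lesssim|\xi_j|_{\vec a}^{-rdm}$, coming from the anisotropic dilations $A_j=D_{\vec a}(\delta\tilde h(\xi_j))$ with $\tilde h = h_1$ on $J_1$. After converting $|\xi|^{p+1}$ into powers of $|\xi_j|_{\vec a}$ via \eqref{eq:anormJ1}, summability over $J_1$ (which rests on the packing estimate $\sum_{j\in J_1}h_1(\xi_j)^{d}\asymp|B_d(0,C)|<\infty$, hence $\sum_{j\in J_1}|\xi_j|_{\vec a}^{rd}<\infty$) forces roughly $p>\alpha_2(rd+rdm)+m-1$, as in the paper; taking $p=m$ would make the argument fail whenever $r\alpha_2 d>1$. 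Second, the limiting polynomial $P$ is not produced by ``taking $k\to\infty$'' of the $P_k$ (which in general do not converge); it arises because the limit $f_0$ of the corrected partial sums agrees with $f$ when tested against any $\psi$ whose support avoids the origin (for such $\psi$ the Taylor polynomial $T_p\psi$ vanishes identically and $\sum_j\phi_j^2=1$ applies), so $\text{supp}(\hat f-\hat f_0)\subseteq\{0\}$ and therefore $f-f_0\in\mathcal{P}$. Both issues are repairable within your framework, so the plan is sound once these quantitative and structural points are made precise.
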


\begin{myre}
The proof of Lemma \ref{lemma:konvergens} given below will show that one has a similar convergence result for BAPUs associated with the inhomogeneous decomposition spaces  considered in \cite[Proposition 1]{Nielsen}.
\end{myre}
To prove Lemma \ref{lemma:konvergens} we will need the following proposition, which was stated by Peetre in \cite{MR0461123} in a related context. A proof of the result can be found in \cite[Section 6]{BH05}.

\begin{myprop}\label{prop:distrib}
Suppose $\{f_i\}_{i\in\mathbb{N}}$ is a sequence of distributions in $\mathcal{S}'(\mathbb{R}^d)$ and $p\geq 0$ is an integer. Assume that for every multi-index $\gamma$ with $|\gamma| = p+1$, the sequence of partial derivatives $\{\partial^\gamma f_i\}$ converges in $\mathcal{S}'$ as $i\to \infty$. Then there exists a sequence of polynomials $\{P_i\}_{i\in\mathbb{N}}$ with $\deg P_i \leq p$ such that $\{f_i + P_i\}$ converges to some distributions $f\in\mathcal{S}'$ as $i\to\infty$. 
\end{myprop}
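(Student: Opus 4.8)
The plan is to reduce the statement to a single \emph{division lemma} for Schwartz functions and then invoke the Banach--Steinhaus theorem. The guiding observation is that the only obstruction to convergence of $f_i$ itself is the annihilator of all $(p+1)$-st order derivatives, which is precisely the space of polynomials of degree at most $p$. Everything is phrased through duality: convergence of $\partial^\gamma f_i$ in $\mathcal{S}'$ for every $|\gamma| = p+1$ means exactly that $\langle f_i, \partial^\gamma\varphi\rangle = (-1)^{p+1}\langle\partial^\gamma f_i,\varphi\rangle$ converges for every $\varphi\in\mathcal{S}$ and every such $\gamma$.

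First I would set $V := \{\psi\in\mathcal{S} : \int_{\mathbb{R}^d} x^\alpha\psi(x)\,dx = 0 \text{ for all } |\alpha|\le p\}$, the Schwartz functions with vanishing moments up to order $p$, and prove that $V = \mathrm{span}\{\partial^\gamma\varphi : \varphi\in\mathcal{S},\ |\gamma| = p+1\}$. The inclusion $\supseteq$ is immediate from integration by parts. For $\subseteq$ I pass to the Fourier side, where $\psi\in V$ corresponds to $\hat\psi$ vanishing to order $p+1$ at the origin, and I must factor $\hat\psi = \sum_{|\gamma|=p+1}\xi^\gamma g_\gamma$ with each $g_\gamma\in\mathcal{S}$. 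I would split $\hat\psi = \chi\hat\psi + (1-\chi)\hat\psi$ with $\chi$ a smooth cutoff supported near $0$: on $\mathrm{supp}(1-\chi)$ I use the identity $1 = |\xi|^{-2(p+1)}\sum_{|\gamma|=p+1}\binom{p+1}{\gamma}\xi^{2\gamma}$ to divide directly, obtaining Schwartz factors; while on the compactly supported piece $\chi\hat\psi$ I use Taylor's formula with integral remainder, whose remainder terms are smooth and compactly supported, hence Schwartz. Taking inverse Fourier transforms, it follows that every $\psi\in V$ is a finite combination of $(p+1)$-st order derivatives, so $\langle f_i,\psi\rangle$ converges for every $\psi\in V$.

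Next I would fix a dual system $\{\theta_\alpha\}_{|\alpha|\le p}\subset\mathcal{S}$ to the (linearly independent) moment functionals, i.e.\ $\int x^\beta\theta_\alpha\,dx = \delta_{\alpha\beta}$, so that every $\psi\in\mathcal{S}$ decomposes as $\psi = \psi_0 + \sum_{|\alpha|\le p}\bigl(\int x^\alpha\psi\,dx\bigr)\theta_\alpha$ with $\psi_0\in V$. Defining the polynomials $P_i := -\sum_{|\alpha|\le p}\langle f_i,\theta_\alpha\rangle\,x^\alpha$, which have degree at most $p$, a direct computation gives $\langle f_i + P_i,\psi\rangle = \langle f_i,\psi_0\rangle$ for every $\psi\in\mathcal{S}$, and the right-hand side converges by the previous step. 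Since $\mathcal{S}$ is a Fréchet, hence barrelled, space, the pointwise limit $\psi\mapsto\lim_i\langle f_i+P_i,\psi\rangle$ is automatically a continuous linear functional by the Banach--Steinhaus theorem, so $f_i+P_i$ converges to some $f\in\mathcal{S}'$, as required.

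I expect the division lemma of the second paragraph to be the main obstacle: a naive global Taylor expansion produces factors $g_\gamma$ that fail to decay at infinity, because near $t=0$ the rescaled argument $t\xi$ stays close to the origin where $\partial^\gamma\hat\psi$ need not vanish. The cutoff decomposition separating the behaviour at $0$ from the behaviour at $\infty$ is therefore essential, and maintaining membership in $\mathcal{S}$ for both pieces is the delicate point. The remaining steps---the moment decomposition and the appeal to barrelledness of $\mathcal{S}$---are routine once this factorization is in hand.
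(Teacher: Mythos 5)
Your proof is correct: the division lemma (smooth cutoff, the multinomial identity $|\xi|^{2(p+1)}=\sum_{|\gamma|=p+1}\binom{p+1}{\gamma}\xi^{2\gamma}$ at high frequency, Taylor with integral remainder at low frequency), the biorthogonal moment system $\{\theta_\alpha\}$, and the appeal to barrelledness of $\mathcal{S}$ are all sound and together establish the proposition. Note that the paper does not prove this statement itself but cites \cite[Section 6]{BH05}, and the argument there is essentially the same as yours: reduction to test functions with vanishing moments up to order $p$, a Fourier-side factorization of such functions into $(p+1)$-st order derivatives of Schwartz functions, polynomial correction via the moment functionals, and weak-$*$ sequential completeness of $\mathcal{S}'$.
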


\begin{proof} [Proof of Lemma \ref{lemma:konvergens}]
Take an arbitrary $f\in \mathcal{S}'$. Then  $\hat{f} \in \mathcal{S}'$. Hence, $\hat{f}$ is a bounded functional with respect to the semi-norm on $\mathcal{S}(\mathbb{R}^d)$ and is therefore of finite order. We assume $\hat{f}$ has order $\leq m$. More precisely, there exists an integer $l \geq 0$ and a constant $C$ such that
\begin{equation}\label{eq:finiteorder}
\left|\ip{\hat{f},\psi}\right| \leq C \sup_{|\alpha|\leq l, |\beta| \leq m} \norm{\psi}_{\alpha,\beta}, \quad \text{for all} \; \psi \in \mathcal{S},
\end{equation}
where $\norm{\psi}_{\alpha,\beta} := \sup_{x\in\mathbb{R}^d} |x^\alpha| |\partial^\beta \psi(x)|$ for every multi-indices $\alpha,\beta \in \mathbb{N}_0^d$. \\

To show convergence of the series in \eqref{eq:series} we consider two cases.
\begin{itemize}
	\item[Case 1:]
Let $j\in J_2$. The support of $\phi_j$ satisfies

\begin{equation}\label{eq:support}
\text{supp}(\phi_j) \subset \{\xi \in B_d(\xi_j,\delta \tilde{h}(\xi_j)) \; : \; |\xi-\xi_{j}|_{\vec{a}} \leq \delta \tilde{h}(\xi_{j})\}.
\end{equation}
To show that the series $\sum_{j\in J_2} \check{\phi}^2_j * f$ converges in $\mathcal{S}'$, we use that the Fourier transform is an isomorphism of $\mathcal{S}'$. This allows us to consider convergence of the series $\sum_{j \in J_2} \phi^2_j  \hat{f}$ in $\mathcal{S}'$. 
Since $\phi^2_j \psi \in \mathcal{S}$ we have by \eqref{eq:finiteorder},
\begin{equation}\label{eq:norm}
\left|\ip{\phi^2_j \hat{f},\psi}\right| = \left|\ip{\hat{f}, \phi^2_j \psi}\right| \leq C \sup_{|\alpha|\leq l, |\beta| \leq m} \norm{\phi^2_j \psi}_{\alpha,\beta}.
\end{equation}
We consider $\partial^\beta \phi_j^2$, where $\phi_j$ is defined in \eqref{eq:squarerootbapu}.
Let $u(\xi) := \Phi^2(A_j^{-1}\xi - A_j^{-1}b_j)$, $v(\xi) := \sum_{k\in J_2} \Phi^2(A_k^{-1}\xi - A_k^{-1}b_k)$ and recall the formula in \eqref{eq:Leib}. 
By the same arguments as in the proof of Proposition \ref{prop1}, it suffices to consider the numerator of the expression on the right hand side of \eqref{eq:Leib}. 
Applying the chain rule shows that $\sup_{|\lambda|=s_1} |\partial^\lambda u| \leq C_\lambda \norm{A_j^{-1}}^{s_1} \sup_{|\lambda|=s_1}|\partial^\lambda \Phi^2|$. Since a similar estimate holds for $\sup_{|\beta-\lambda|=s_2}|\partial^{\beta-\lambda} v|$ we have
\begin{equation*}
\sup_{|\beta|=s_1+s_2 = s} |\partial^\beta \phi_j^2| \leq C \norm{A_{j}^{-1}}^s \sup_{|\beta|=s}|\partial^\beta \Phi^2|.
\end{equation*}
Since $j\in J_2$, there exists a constant $M<\infty$ such that $M^{-1} \leq \tilde{h}(\xi_j)$ for all $j\in J_2$ (see the proof of Lemma \ref{lemma1}).
This implies that each entry in $A_j^{-1}$ is uniformly bounded, thus the norms $\norm{A_{j}^{-1}}$ are uniformly bounded for all $j\in J_2$. Using this together with Leibniz's formula and \eqref{eq:support} yields
\begin{flalign*}
\sup_{|\alpha|\leq l, |\beta| \leq m} \norm{\phi^2_j  \psi}_{\alpha,\beta}
&\leq \sup_{\xi \in \mathbb{R}^d} (1+|\xi|)^l \; \sup_{|\beta| \leq m}|\partial^\beta \phi^2_j (\xi) \psi(\xi)| \\
&\leq C  \sup_{\xi \in \mathbb{R}^d} \left( (1+|\xi|)^l \sup_{|\beta|\leq m} |\partial^\beta \phi^2_j (\xi)| \cdot \sup_{|\beta|\leq m} |\partial^{\beta} \psi(\xi)|\right) \\
&\leq C \sup_{\xi \in B_d(\xi_j,\delta \tilde{h}(\xi_j))} (1+|\xi|)^l \sup_{|\beta|\leq m} |\partial^\beta \psi(\xi)| \\
&\leq C \sup_{|\alpha|\leq l+d+1, |\beta| \leq m} \norm{\psi}_{\alpha,\beta} \; \sup_{\xi\in B_d(\xi_j,\delta \tilde{h}(\xi_j))} (1+|\xi|)^{-d-1}.
\end{flalign*}
Now we need to sum over $J_2$. But for $\xi \in B_d(\xi_j,\delta \tilde{h}(\xi_j))$ we have $\sum_{j\in J_2} (1+|\xi|)^{-d-1} < \infty$ since the covering $\mathcal{Q}$ has finite height and $\tilde{h}(\xi_j) \geq M^{-1}$ for $j\in J_2$.
Combining this with \eqref{eq:norm} shows that the series $\sum_{j\in J_2} \phi_j^2 \hat{f}$ converges in $\mathcal{S}'$.\\

\item[Case 2:] Let $j\in J_1$ and note by definition of $J_1$ that $\tilde{h}(\xi_j) = h_1(\xi_j)$ for all $j\in J_1$. Thus the support of $\phi_j$ satisfies
\begin{equation}\label{eq:supporth1}
\text{supp}(\phi_j) \subset \{\xi \in B_d\left(\xi_j,\delta h_1(\xi_j)\right) \; : \; |\xi-\xi_j|_{\vec{a}} \leq \delta h_1(\xi_j)\}.
\end{equation}
To use the result of Proposition \ref{prop:distrib} we must show that for sufficiently large $p$, the series $\sum_{j\in J_1} \partial^\gamma (\check{\phi}_j^2 * f)$ converges in $\mathcal{S}'$ for every multi-index $|\gamma| = p+1$. Again, since the Fourier transform is an isomorphism of $\mathcal{S}'$, this is equivalent to proving that the series $\sum_{j\in J_1} \xi^\gamma \phi_j^2 \hat{f}$ converges in $\mathcal{S}'$ with $|\gamma| = p+1$. 
Recall that $h_1$ satisfies $h_1(\xi) \geq c_0|\xi|_{\vec{a}}^r$ for some $c_0>0$ and $r \geq 1$ (see Definition \ref{def:hfunction}). Using the formula in \eqref{eq:Leib} we have
\begin{align}\label{eq:h1estimate}
\sup_{|\beta|=s} \left| \partial^\beta \phi_j^2(\xi) \right| &\leq C \norm{A_j^{-1}}^s \sup_{|\beta|=s} |\partial^\beta \Phi^2|
\leq C|\xi_j|_{\vec{a}}^{-rd\cdot s} \sup_{|\beta|=s} |\partial^\beta \Phi^2|,
\end{align}
where $d = \sum_{i=1}^d a_i$. Choose any integer $p > \alpha_2(rd+rdm) + m - 1$, where $\alpha_2$ is the value occurring in \eqref{eq:anormJ1}. Using \eqref{eq:h1estimate} together with \eqref{eq:supporth1} we have

\allowdisplaybreaks
\begin{flalign}\label{eq:smaafrekvens}
&\sup_{|\alpha|\leq l, |\beta| \leq m} \norm{\xi^\gamma \phi_j^2 \psi}_{\alpha,\beta} \nonumber \\
&\leq \sup_{\xi \in \mathbb{R}^d} (1+|\xi|)^l \sup_{|\beta|\leq m} \left| \partial^\beta \xi^\gamma \phi_j^2(\xi) \psi(\xi) \right| \nonumber \\
&\leq C \sup_{0\leq k \leq m} \sup_{\xi \in \mathbb{R}^d} \left ( (1+|\xi|)^l |\xi|^{p+1-k} \sup_{|\beta|\leq m-k} |\partial^\beta \phi_j^2(\xi)| \cdot \sup_{|\beta|\leq m-k} |\partial^\beta \psi(\xi)| \right) \nonumber \\
&\leq C \sup_{0\leq k \leq m} |\xi_j|_{\vec{a}}^{-rd(m-k)} \sup_{|\alpha|\leq l, |\beta|\leq m} \norm{\psi}_{\alpha,\beta} \sup_{\xi \in B_d(\xi_j,\delta h_1(\xi_j))} |\xi|^{p+1-k} \nonumber \\
&\leq C \sup_{0\leq k \leq m} |\xi_j|_{\vec{a}}^{-rd(m-k)+1/\alpha_2(p+1-k)} \sup_{|\alpha|\leq l, |\beta|\leq m} \norm{\psi}_{\alpha,\beta},
\end{flalign}
where we in the last step used \eqref{eq:anormJ1} and the fact that $h_1(\xi)$ is d-moderate. Now we need to sum over $J_1$. Note that, by construction of the covering, $|B_d(0,C)| \asymp \sum_{j \in J_1} h_1(\xi_j)^d$. So $\sum_{j\in J_1} |\xi_j|_{\vec{a}}^{rd} \leq c |B_d(0,C)| < \infty$. 
Therefore, by \eqref{eq:norm} and \eqref{eq:smaafrekvens} we have, for any $|\gamma| = p+1$ and by our choice of $p$ that
\begin{equation*}
\left|\ip{\xi^\gamma \phi_j^2\hat{f},\psi}\right| = \left|\ip{\hat{f},\xi^\gamma \phi_j^2\psi}\right| \leq C \sup_{|\alpha|\leq l, |\beta|\leq m} \norm{\xi^\gamma \phi_j^2\psi}_{\alpha,\beta} < \infty.
\end{equation*}
Because $|\gamma| = p+1$ is arbitrary we conclude by Proposition \ref{prop:distrib} that there exists a sequence of polynomials $\{P_k\}$ such that $\{\sum_{j=-k}^{-1} \check{\phi}_j^2 * f + P_k\}$ converges in $\mathcal{S}'$ as $k\to \infty$.
\end{itemize}
Combining the above shows that $\{\sum_{j=-k}^K \check{\phi}_j^2 * f + P_k\}$ converges to some distribution $f_0 \in \mathcal{S}'$ as $k,K\to\infty$. Using that  $\sum_{j\in J} \phi_j^2(\xi) = 1$ it follows that $\text{supp}(\hat{f}-\hat{f}_0) = \{0\}$ by testing against $\psi \in \mathcal{S}$ with $0 \notin \text{supp}(\psi)$. Hence we conclude that there exists a polynomial $P$ such that $f=f_0 + P$. This completes the proof.  
\end{proof}

In the following section we will construct a tight frame system for $L_2(\mathbb{R}^d)$ and use the result of Lemma \ref{lemma:konvergens} to show that any function or tempered distribution has an expansion in terms of the aforementioned system.

\section{Tight Frames} \label{section:frames}
Here we construct tight frames for $L_2(\mathbb{R}^d)$ adapted to a given structured admissible covering based on the system from Proposition \ref{prop1} (ii). The method used below to construct tight frames for $L_2(\mathbb{R}^d)$ was first introduced in \cite{Nielsen}.

Assume $\mathcal{Q} = \{Q_j\}_{j\in J}$ is a structured admissible covering of $\mathbb{R}^d\backslash\{0\}$. Let $K_a$ be a cube in $\mathbb{R}^d$ (aligned with the coordinate axes) with side-length $2a$ satisfying $Q \subseteq K_a$. We define
\begin{equation}\label{eq:ortbasis}
e_{n,j}(\xi) := (2a)^{-\tfrac{d}{2}}|T_j|^{-\tfrac{1}{2}} \chi_{K_a}(T_j^{-1}\xi)\e^{i\tfrac{\pi}{a}n\cdot T_j^{-1}\xi}, \quad n\in\mathbb{Z}^d, j \in J,
\end{equation}
and
\begin{equation*}
\hat{\eta}_{n,j} := \phi_j e_{n,j}, \quad n\in \mathbb{Z}^d, j \in J,
\end{equation*}
with $\phi_j$ given in Proposition \ref{prop1} (ii). We can also obtain an explicit representation of $\eta_{n,j}$ in direct space. Let $T_j = A_j\cdot + b_j$ and define $\hat{\mu}_{j}(\xi) := \phi_{j}(T_j\xi)$. Then, by a simple substitution we find that
\begin{align}\label{eq:eta}
\eta_{n,j}(x) = (2a)^{-d/2} |T_j|^{1/2} \e^{ix\cdot b_j} \mu_j(\tfrac{\pi}{a}n + A_j^Tx).
\end{align}
Since $\phi_j \in \mathcal{S}(\mathbb{R}^d)$ has compact support in $Q_j$, all of its partial derivatives are continuous and have compact support. Therefore $ |\partial^{\beta} \hat{\mu}_{j}(\xi)| = |\partial^{\beta} \phi_j(T_j\xi)| \leq C_{\beta}\chi_{Q_j}(\xi)$ for every $\beta \in \mathbb{N}^d$. Thus, we have
\begin{align*}
|\mu_{j}(x)| 
= \left|\mathcal{F}^{-1}\phi_j(T_j x)\right| \leq C |(ix)^{-|\beta|}| \; \int_{\mathbb{R}^d} \left|\partial^\beta \hat{\mu}_j(\xi)\right| \d \xi.
\end{align*}
Now, for any $N\in\mathbb{N}$ we sum over $|\beta| \leq N$ and use that $\sum_{|\beta|\leq N}\left|x^\beta\right| \asymp  (1+|x|)^N$ to obtain
\begin{align}\label{eq:estimate}
|\mu_j(x)| &\leq C(1+|x|)^{-N} \sum_{|\beta|\leq N} \norm{\partial^\beta \hat{\mu}_j}_{L_1} \leq C_N(1+|x|)^{-N},
\end{align}
with $C_N$ a constant that is independent of $j\in J$.
We next verify that the system $\{\eta_{n,j}\}_{n\in\mathbb{Z}^d,j\in J}$ constitutes a tight frame for $L_2(\mathbb{R}^d)$. Note that, since $\{e_{n,j}\}_{n\in\mathbb{Z}^d, j\in J}$ forms an orthonormal basis for $L_2(T_j(K_a))$ and $\text{supp}(\phi_j) \subset T_j(K_a)$ we have
\begin{align*}
\sum_{n\in\mathbb{Z}^d} |\ip{f,\eta_{n,j}}|^2 &= \sum_{n\in\mathbb{Z}^d} \left| \ip{\phi_j \hat{f},e_{n,j}}\right|^2 = \norm{\phi_j \hat{f}}^2_{L_2}.
\end{align*}
Using that $\{\phi_j^2\}_{j\in J}$ is a partition of unity yields
\begin{equation*}
\sum_{j\in J} \sum_{n\in\mathbb{Z}^d} |\ip{f,\eta_{n,j}}|^2 = \sum_{j\in J} \norm{\phi_j \hat{f}}^2_{L_2} = 
\int_{\mathbb{R}^d} \sum_{j\in J} \phi_j^2(\xi)|\hat{f}(\xi)|^2 \d \xi = \norm{f}^2_{L_2}.
\end{equation*}
We can also obtain the frame expansion of $\hat{f}$ directly. Note that
\begin{equation*}
\phi_j\hat{f} = \sum_{n\in\mathbb{Z}^d} \ip{\phi_j\hat{f},e_{n,j}}e_{n,j} = \sum_{n\in\mathbb{Z}^d} \ip{\hat{f},\hat{\eta}_{n,j}}e_{n,j},
\end{equation*}
so
\begin{equation} \label{eq:phi}
\phi_j^2\hat{f} = \sum_{n\in\mathbb{Z}^d} \ip{\hat{f},\hat{\eta}_{n,j}}\phi_je_{n,j} = \sum_{n\in\mathbb{Z}^d} \ip{\hat{f},\hat{\eta}_{n,j}}\hat{\eta}_{n,j}.
\end{equation}
Again, since $\{\phi_j^2\}_{j\in J}$ is a partition of unity we have
\begin{equation*}
\hat{f} = \sum_{j\in J} \phi_j^2 \hat{f} = \sum_{j\in J} \sum_{n\in\mathbb{Z}^d} \ip{\hat{f},\hat{\eta}_{n,j}}\hat{\eta}_{n,j}.
\end{equation*}

\begin{myre}
The above arguments shows that we obtain a tight frame for $L_2(\mathbb{R}^d)$ regardless of the choice of cube $K_a$ as long as $Q \subseteq K_a$, and all we need is a partition of unity to be able to construct frames for $L_2(\mathbb{R}^d)$. 
\end{myre}

Using the tight frame $\{\eta_{n,j}\}_{n\in\mathbb{Z}^d,j\in J}$ we state the following result.

\begin{myprop}\label{prop:reproduce}
	Suppose $K_a$ is a cube in $\mathbb{R}^d$ (aligned with the coordinate axes) with side-length $2a$ satisfying $Q \subseteq K_a$. If
	$g\in \mathcal{S}'(\mathbb{R}^d), h\in \mathcal{S}(\mathbb{R}^d)$ and 
	\begin{equation*}
	\text{supp}(\hat{g}), \; \text{supp}(\hat{h}) \subset T_j(K_a), \quad \text{for some} \; j\in J,
	\end{equation*}
	then
	\begin{equation}\label{eq:gfoldetmedh}
	(g*h)(x) = \sum_{n\in\mathbb{Z}^d} (2a)^{-d} |T_j|^{-1} g(-T_j^{-1}\tfrac{\pi}{a}n)h(T_j^{-1}\tfrac{\pi}{a}n-x),
	\end{equation}
	where the series converges in $\mathcal{S}'$.
	Consequently, if $\phi_j$ satisfy \eqref{eq:phisatisfy1}, \eqref{eq:phisatisfy2}, then, for any $f \in \mathcal{S}'\backslash\mathcal{P}$ we have
	
	\begin{equation}\label{eq:fexpansion}
	f=\sum_{j\in J} \sum_{n\in\mathbb{Z}^d} \ip{f,\eta_{n,j}}\eta_{n,j},
	\end{equation}
	where the convergence of the series is in $\mathcal{S}'\backslash\mathcal{P}$. To be more precise, this means there exists a sequence of polynomials $\{P_k\}_{k=1}^\infty \subset \mathcal{P}$ and $P\in \mathcal{P}$ such that
	\begin{equation*}
		f = \lim\limits_{\substack{k,K,N \to \infty}} \left( \sum_{j= -k}^{K} \; \sum_{n=-N}^N \ip{f,\eta_{n,j}}\eta_{n,j} + P_k\right) + P,
	\end{equation*}
	with convergence in $\mathcal{S}'$.
\end{myprop}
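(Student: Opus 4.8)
The proof proceeds in two stages: first the pointwise reproducing identity \eqref{eq:gfoldetmedh}, and then, by specializing it to the building blocks $\check{\phi}_j^2 * f$ and summing over $j$ by means of Lemma \ref{lemma:konvergens}, the frame expansion \eqref{eq:fexpansion}. For \eqref{eq:gfoldetmedh} the first thing to record is that since $\hat{g}$ is a compactly supported tempered distribution (supported in $T_j(K_a)$), the Paley--Wiener--Schwartz theorem shows that $g$ is in fact a smooth function of at most polynomial growth, so the pointwise samples $g(-T_j^{-1}\tfrac{\pi}{a}n)$ appearing on the right-hand side are well defined. The identity itself is a sampling formula for functions band-limited to the cube $T_j(K_a)$.

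I would prove it by exploiting that $\{e_{n,j}\}_{n\in\mathbb{Z}^d}$ from \eqref{eq:ortbasis} is an orthonormal basis for $L_2(T_j(K_a))$, with $|T_j(K_a)| = (2a)^d|T_j|$. Concretely, one expands a band-limited factor in this basis; because $e_{n,j}$ restricted to the cube is the pure exponential $e^{i\frac{\pi}{a}n\cdot T_j^{-1}\xi}$, its Fourier coefficients against $\hat{g}$ (resp.\ $\hat{h}$) are, by the inversion formula, constant multiples of the samples of $g$ (resp.\ $h$) on the lattice $T_j^{-1}\tfrac{\pi}{a}\mathbb{Z}^d$, up to a fixed phase coming from the translation part $b_j$ of $T_j$. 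Multiplying $\hat{h}$ by such an exponential corresponds in direct space to a translation of $h$, and since $\hat{g}$ is supported in the same cube the factor $\chi_{K_a}(T_j^{-1}\cdot)$ may be dropped; applying $\mathcal{F}^{-1}$, using the convolution theorem $\widehat{g*h} = (2\pi)^{d/2}\hat{g}\hat{h}$, and collecting the phases, the Jacobian $|T_j|$ and the Fourier normalization constants then produces \eqref{eq:gfoldetmedh} with the stated constant $(2a)^{-d}|T_j|^{-1}$. Convergence in $\mathcal{S}'$ is obtained by pairing the series with a test function $\varphi\in\mathcal{S}$ and interchanging: the resulting scalar series has general term a product of $g(-T_j^{-1}\tfrac{\pi}{a}n)$, which grows at most polynomially in $n$, with the values at the lattice points of a fixed Schwartz function (a reflection-convolution of $h$ with $\varphi$), which decay rapidly in $n$ because $h\in\mathcal{S}$; hence the series converges absolutely and one checks that it sums to $\ip{g*h,\varphi}$.

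For the frame expansion I would first note that each $\eta_{n,j} = \mathcal{F}^{-1}(\phi_j e_{n,j})$ has Fourier transform supported in $Q_j\subset\mathbb{R}^d\backslash\{0\}$, hence $\eta_{n,j}\in\mathcal{S}_0$, so that the coefficients $\ip{f,\eta_{n,j}}$ are well defined for every $f\in\mathcal{S}'\backslash\mathcal{P}$. Fix $j$ and apply \eqref{eq:gfoldetmedh} to $g := \check{\phi}_j * f = \mathcal{F}^{-1}(\phi_j\hat{f})$ and $h := \check{\phi}_j$; both are band-limited to $Q_j\subset T_j(K_a)$, and $g*h = \check{\phi}_j^2 * f$. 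A direct computation using \eqref{eq:eta} and the definition of $e_{n,j}$ identifies, term by term, the summand $(2a)^{-d}|T_j|^{-1} g(-T_j^{-1}\tfrac{\pi}{a}n)\,h(T_j^{-1}\tfrac{\pi}{a}n - x)$ with $\ip{f,\eta_{n,j}}\eta_{n,j}(x)$, where $\ip{f,\eta_{n,j}} = \ip{\phi_j\hat{f}, e_{n,j}}$ supplies the sample of $g$ and $\eta_{n,j}(x)$ supplies the translate of $h$; this is exactly the Fourier-domain computation \eqref{eq:phi} carried out at the level of distributions. Thus $\check{\phi}_j^2 * f = \sum_{n\in\mathbb{Z}^d}\ip{f,\eta_{n,j}}\eta_{n,j}$ in $\mathcal{S}'$ for each $j$. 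Summing over $j$ and invoking Lemma \ref{lemma:konvergens} — which furnishes the polynomials $\{P_k\}$ and $P$ making $\sum_{j=-k}^K\check{\phi}_j^2 * f + P_k$ convergent in $\mathcal{S}'$ — gives \eqref{eq:fexpansion}; since for fixed $k,K$ the $j$-sum is finite, the $N$-truncation of the inner series may be pulled out, yielding the stated triple limit in $k,K,N$.

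The main obstacle is the rigorous treatment of \eqref{eq:gfoldetmedh} for a distribution $g$ rather than a function: one must simultaneously use Paley--Wiener--Schwartz to give the samples $g(-T_j^{-1}\tfrac{\pi}{a}n)$ meaning, justify the orthonormal-basis expansion and the passage from exponential multipliers to translations at the level of compactly supported distributions, and control the interplay between the polynomial growth of the samples of $g$ and the Schwartz decay of $h$ so as to secure convergence in $\mathcal{S}'$. The exact normalizing constant and the phase factors attached to the translation part $b_j$ of $T_j$ require careful bookkeeping in the affine parameters, but once \eqref{eq:gfoldetmedh} is in place the second stage is essentially mechanical given Lemma \ref{lemma:konvergens}.
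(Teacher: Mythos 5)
Your proposal follows essentially the same route as the paper: the sampling identity \eqref{eq:gfoldetmedh} is derived from the orthonormal basis \eqref{eq:ortbasis} of $L_2(T_j(K_a))$, with Fourier inversion turning the basis coefficients of $\hat{g}$ into lattice samples of $g$, and the expansion \eqref{eq:fexpansion} is then obtained by specializing to $\hat{g}=\phi_j\hat{f}$, $\hat{h}=\phi_j$, identifying the summands with $\ip{f,\eta_{n,j}}\eta_{n,j}$, and summing over $j$ via Lemma \ref{lemma:konvergens}. That is precisely the paper's two-stage argument, and your observations that $\eta_{n,j}\in\mathcal{S}_0$ makes the coefficients well defined on $\mathcal{S}'\backslash\mathcal{P}$, and that the triple limit reduces to finite $j$-sums, are consistent with it.

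The one point where you stop short is the step you yourself flag as the main obstacle. For a genuine distribution $g$ the orthonormal expansion of $\hat{g}$ is not available ($\hat{g}$ need not lie in $L_2(T_j(K_a))$), and your pairing argument, while it correctly shows absolute convergence of the series tested against $\varphi\in\mathcal{S}$ (polynomial growth of the samples of $g$ against the Schwartz decay of $h*\varphi$), does not by itself prove that the sum equals $\ip{g*h,\varphi}$; you leave that as ``one checks''. Making your route rigorous would require justifying the periodization and Fourier-series expansion of the compactly supported distribution $\hat{g}$ in $\mathcal{S}'$. The paper closes exactly this point with a shorter regularization device worth recording: set $g_\delta(x)=\gamma(\delta x)g(x)$ with $\gamma\in\mathcal{S}$, $\gamma(0)=1$ and $\operatorname{supp}(\hat{\gamma})$ compact. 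Since Paley--Wiener--Schwartz gives that $g$ is smooth of at most polynomial growth, $g_\delta\in\mathcal{S}$, so the already established Schwartz case applies to $g_\delta$; letting $\delta\to 0$ and using dominated convergence (again via the polynomial growth of $g$) yields \eqref{eq:gfoldetmedh} for general $g\in\mathcal{S}'$ with band-limited $\hat{g}$. Substituting this limit argument for your ``one checks'' completes your proof along the paper's lines.
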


\begin{proof}
The proof of Proposition \ref{prop:reproduce} is an adaptation of \cite[Lemma 2.8]{BH05}. We begin by considering $g\in\mathcal{S}(\mathbb{R}^d)$. Since the functions in \eqref{eq:ortbasis} constitute an orthonormal basis for $L_2(T_j(K_a))$, we can expand $\hat{g}$ in this basis,
	\begin{align}\label{eq:basis}
\hat{g}(\xi) = \sum_{n\in\mathbb{Z}^d} (2a)^{-d} |T_j|^{-1} \left(\int_{T_j(K_a)} \hat{g}(\zeta) \e^{i(-T_j^{-1}\tfrac{\pi}{a}n) \cdot \zeta} \d \zeta \right) \e^{iT_j^{-1}\tfrac{\pi}{a}n\cdot \xi}.
	\end{align}
Since $\hat{g}$ has compact support in $T_j(K_a)$ we may integrate over $\mathbb{R}^d$ in the above integral without affecting the expression. Hence, with the use of Fourier's inversion formula we find that 
\begin{align*}
\hat{g}(\xi) &= \sum_{n\in\mathbb{Z}^d} (2a)^{-d} |T_j|^{-1} (2\pi)^{d/2} g(-T_j^{-1} \tfrac{\pi}{a}n) \e^{iT_j^{-1}\tfrac{\pi}{a}n\cdot \xi}, \quad \text{for} \; \xi \in T_j(K_a).
	\end{align*}
	Since $\text{supp}(\hat{h}) \subset T_j(K_a)$, we may replace $\hat{g}$ by its periodic extension without altering the product $\hat{g}\hat{h}$. Using that $g*h = (\hat{g}\hat{h})^{\check{}}$ we obtain
	\begin{align}\label{align:sum}
		(g*h)(x) &= \sum_{n\in\mathbb{Z}^d} (2a)^{-d} |T_j|^{-1} (2\pi)^{d/2} g(-T_j^{-1}\tfrac{\pi}{a}n)\left( \e^{iT_j^{-1}\tfrac{\pi}{a}n\cdot \xi} \hat{h}(\xi)\right)^{\check{}}(x) \nonumber \\
		&= \sum_{n\in\mathbb{Z}^d} (2a)^{-d} |T_j|^{-1} g(-T_j^{-1}\tfrac{\pi}{a}n) h\left(T_j^{-1}\tfrac{\pi}{a}n-x \right).
	\end{align}
Using the expansion \eqref{align:sum} we now consider the case of a general distribution $g\in \mathcal{S}'$. Since $\hat{g} \in \mathcal{S}'(\mathbb{R}^d)$ has compact support in the Fourier domain, $g$ is a regular distribution, i.e. $g$ is at most of polynomial growth. Let $\delta > 0$ and set $g_\delta (x) = \gamma (\delta x)g(x)$, where $\gamma \in \mathcal{S}(\mathbb{R}^d)$ satisfies $\gamma(0)=1$, and $\text{supp}(\hat{\gamma})$ is compact. Then $g_\delta(x) \in \mathcal{S}(\mathbb{R}^d)$, so by \eqref{align:sum} we have
	\begin{equation*}
		(g_\delta * h)(x) = \sum_{n\in\mathbb{Z}^d} (2a)^{-d} |T_j|^{-1} g_\delta (-T_j^{-1}\tfrac{\pi}{a}n)h(T_j^{-1}\tfrac{\pi}{a}n-x).
	\end{equation*}
	Because of the growth behaviour of $g$, we may use Lebesgue's dominated convergence theorem. Thus, taking the limit as $\delta \to 0$ we obtain \eqref{eq:gfoldetmedh} with pointwise convergence in $\mathcal{S}'$.
	
Now, to show \eqref{eq:fexpansion} take any $j\in J$, let $\hat{g} = \hat{f}\phi_j$ and $\hat{h} = \phi_j$. By \eqref{eq:gfoldetmedh}, or as seen more directly by \eqref{eq:basis}, we have
	\begin{equation*}
		f*\check{\phi}_j * \check{\phi}_j = (\hat{f}\phi^2_j)^{\check{}} = \sum_{n\in\mathbb{Z}^d} \ip{f,\eta_{n,j}}\eta_{n,j}, \quad \text{for} \; f\in\mathcal{S}'.
	\end{equation*}
By summing the above over $j\in J$ and using Lemma \ref{lemma:konvergens} together with \eqref{eq:phisatisfy1} and \eqref{eq:phisatisfy2} we obtain \eqref{eq:fexpansion}. Hence the proof is completed.
\end{proof}
	
\begin{myre}
It follows directly from the proof of Proposition  \ref{prop:reproduce} that one has a similar distributional convergence result for the frames  considered in \cite{Nielsen} associated with inhomogeneous decomposition spaces. In fact, in the inhomogeneous case, there is no need to add polynomials to induce convergence and one will have convergence of the corresponding expansion \eqref{eq:fexpansion} in the sense of tempered distributions.  
\end{myre}	

\section{Characterization of \texorpdfstring{$\dot{M}_{p,q}^\beta(\tilde{h})$}--spaces} \label{sec:char}
The goal of this section is to show that the tight frame $\{\eta_{n,j}\}_{n\in\mathbb{Z}^d,j\in J}$ from Section \ref{section:frames} gives an atomic decomposition of the homogeneous decomposition spaces. By this we mean that the canonical coefficient operator is bounded on $\dot{M}_{p,q}^\beta(\tilde{h})$ into a suitable coefficient space on which there is a bounded reconstruction operator. 
We begin with the following lemma, which later will be used
to show that it is possible to express the $\dot{M}_{p,q}^\beta(\tilde{h})$-norm by the canonical frame coefficients.

\begin{mylemma} \label{lemma:stability}
Given $f\in  \mathcal{S}_0(\mathbb{R}^d)$. For $0<p\leq\infty$ there exists $C_p, C_p' < \infty$, such that
\begin{align*}
\left( \sum_{n\in\mathbb{Z}^d} |\ip{f,\eta_{n,j}}|^p\right)^{1/p} &\leq C_p|T_j|^{1/p-1/2} \norm{\widetilde{\phi}^2_j(D)f}_{L_p}, \quad \text{and}  \\
\norm{\phi^2_j(D)f}_{L_p} &\leq C_p'|T_j|^{1/2-1/p} \left( \sum_{n\in\mathbb{Z}^d} |\ip{f,\eta_{n,j}}|^p\right)^{1/p},
\end{align*}
with norm equivalence independent of $j\in J$. When $p=\infty$ the sum over $n\in\mathbb{Z}^d$ is changed to $\sup$. 
\end{mylemma}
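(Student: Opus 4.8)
The plan is to establish the two inequalities separately, in each case peeling off the factor $|T_j|^{1/p-1/2}$ by the dilation structure $Q_j = T_jQ$ and then reducing to a fixed estimate made uniform in $j$ by the decay bound \eqref{eq:estimate} on $\mu_j$ (whose constant $C_N$ is independent of $j$). Throughout I use the reproducing relation $\phi_j\widetilde\phi_j^2 = \phi_j$: since $\sum_{k\in J}\phi_k^2\equiv 1$, the sum $\widetilde\phi_j^2 := \sum_{k\in\tilde j}\phi_k^2$ equals $1$ on $\text{supp}(\phi_j)\subset Q_j$, so $\phi_j\hat f = \phi_j\,\widetilde\phi_j^2\hat f$. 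Since $f\in\mathcal S_0$, all transforms and convolutions below are classical.

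For the first inequality I first identify the coefficients with samples of a band-limited function. Writing $\ip{f,\eta_{n,j}} = \ip{\phi_j\hat f, e_{n,j}}$ and substituting $\xi = T_j\zeta$, these are, up to $(2a)^{-d/2}$ and a unimodular factor, the Fourier-series coefficients on $K_a$ of $\zeta\mapsto\phi_j(T_j\zeta)\hat f(T_j\zeta)$; undoing the dilation one gets $|\ip{f,\eta_{n,j}}| = c\,|T_j|^{-1/2}\,|v_j(A_j^{-T}\tfrac{\pi}{a}n)|$, where $v_j := (\phi_j\hat f)^\vee$, so $\hat v_j = \phi_j\hat f$ is supported in $Q_j$. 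Inserting $\phi_j\hat f = \phi_j\widetilde\phi_j^2\hat f$ gives $v_j = \check\phi_j * F$ with $F := \widetilde\phi_j^2(D)f$; using $\check\phi_j(x) = |T_j|\,e^{ix\cdot b_j}\mu_j(A_j^Tx)$ and the substitution $z = A_j^Ty$, each sample is dominated by $(|\mu_j|*|\widetilde F|)(\tfrac\pi a n)$, where $\widetilde F(z) := F(A_j^{-T}z)$ satisfies $\norm{\widetilde F}_{L_p} = |T_j|^{1/p}\norm{\widetilde\phi_j^2(D)f}_{L_p}$. The first inequality therefore reduces to the uniform sampling estimate $\big(\sum_n |(|\mu_j|*|\widetilde F|)(\tfrac\pi a n)|^p\big)^{1/p}\le C\norm{\widetilde F}_{L_p}$. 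For $1\le p\le\infty$, I cover $\mathbb R^d$ by the cubes of $\tfrac\pi a\mathbb Z^d$, bound $\mu_j$ on each cube by its decay, apply Hölder on each cell, and sum via Young's inequality against the summable sequence $\kappa_l := C_N(1+|\tfrac\pi a l|)^{-N}$ with $N>d$. For $0<p<1$ Hölder fails, and I instead invoke a Plancherel--Polya/Nikolskii inequality comparing the $L_1$- and $L_p$-mass of $\widetilde F$ on cubes; this requires $\widetilde F$ to be band-limited with spectral diameter bounded uniformly in $j$, which holds because $\hat{\widetilde F}$ is supported in $A_j^{-T}\bigcup_{k\in\tilde j}Q_k$ and $\norm{A_j^{-1}A_k}$ is bounded for neighbours $k\in\tilde j$ by \eqref{prop:eq:bounded} (using symmetry of the neighbour relation).

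For the second inequality I use the reproducing identity \eqref{eq:phi}, which in direct space reads $\phi_j^2(D)f = \sum_n \ip{f,\eta_{n,j}}\,\eta_{n,j}$. Substituting the explicit form \eqref{eq:eta} and rescaling by $z = A_j^Tx$ yields $\norm{\phi_j^2(D)f}_{L_p} = (2a)^{-d/2}\,|T_j|^{1/2-1/p}\,\norm{\sum_n c_n\,\mu_j(\cdot + \tfrac\pi a n)}_{L_p}$ with $c_n := \ip{f,\eta_{n,j}}$, so it remains to show $\norm{\sum_n c_n\,\mu_j(\cdot + \tfrac\pi a n)}_{L_p}\le C\big(\sum_n |c_n|^p\big)^{1/p}$ uniformly. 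For $0<p<1$ this follows at once from the $p$-triangle inequality $|\sum_n c_n\mu_j(\cdot+\tfrac\pi a n)|^p\le\sum_n |c_n|^p|\mu_j(\cdot+\tfrac\pi a n)|^p$ and integration, since $\norm{\mu_j}_{L_p}\le C$ by \eqref{eq:estimate} when $Np>d$. For $1\le p\le\infty$ one localises to the lattice cubes, bounds the supremum on each cube by a discrete correlation of $\{c_n\}$ with $\kappa$, and applies Young's inequality.

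The main obstacle is the uniform sampling estimate of the second step in the quasi-Banach range $0<p<1$: the elementary per-cell Hölder bound is unavailable, and one must genuinely exploit that $\widetilde F$ is band-limited, with spectrum of uniformly controlled size, to replace local $L_1$-mass by $L_p$-mass through a Nikolskii-type inequality. Ensuring that the resulting constant does not depend on $j$ — which rests precisely on \eqref{prop:eq:bounded} and on the $j$-independence of $C_N$ in \eqref{eq:estimate} — is the crux; the remaining ingredients are standard convolution and Young-type estimates.
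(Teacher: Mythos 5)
Your proposal is correct, and its scaffolding (the reproducing relation $\phi_j\widetilde{\phi}_j^2=\phi_j$, the identity \eqref{eq:phi}, the decay bound \eqref{eq:estimate} with $j$-independent constant, rescaling by $T_j$, and the case split at $p=1$) matches the paper's; for the second inequality your argument essentially \emph{is} the paper's proof, which applies the $p$-triangle inequality together with $\norm{\eta_{n,j}}_{L_p}\le C|T_j|^{1/2-1/p}$ (your rescaled statement $\norm{\mu_j}_{L_p}\le C$) for $p\le 1$, and H\"older/Young-type estimates for $1\le p\le\infty$. Where you genuinely diverge is the first inequality. The paper never forms the sampling function $v_j=\check{\phi}_j*(\widetilde{\phi}_j^2(D)f)$: it bounds each coefficient by a product norm, $|\ip{f,\eta_{n,j}}|=|\ip{\widetilde{\phi}_j^2(D)f,\eta_{n,j}}|\le\norm{(\widetilde{\phi}_j^2(D)f)\,\eta_{n,j}}_{L_1}$, applies the rescaled Nikolskii inequality of Lemma \ref{lemma:nikolskij} to each product --- band-limited with spectrum in $T_j(K_{2a})$ --- to pass from $L_1$ to $L_p$ at the cost of $|T_j|^{1/p-1}$, and then sums over $n$ by interchanging sum and integral via the pointwise bound \eqref{eq:etaestimates1}, $\sup_x\norm{\{\eta_{n,j}(x)\}_{n}}_{\ell_p}\le C_p|T_j|^{1/2}$. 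This sidesteps sampling theory entirely: no Plancherel--Polya inequality is needed, and uniformity in $j$ is automatic because the Nikolskii lemma is already stated in rescaled form. Your route --- realizing the coefficients as lattice samples of the band-limited function $v_j$ and invoking Plancherel--Polya in the range $0<p<1$ --- is the classical $\varphi$-transform argument; its cost is exactly what you flag (a quasi-Banach sampling inequality with uniform spectral-diameter control), and its gain is the conceptually transparent identification of frame coefficients with samples, which the paper's product trick hides. Note that both arguments ultimately rely on \eqref{prop:eq:bounded}: the paper needs it for the uniform inclusion $\bigcup_{k\in\widetilde{j}}T_k(K_a)\subset T_j(K_{a'})$. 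One harmless slip in your write-up: the spectrum of $\widetilde{F}$ is $A_j^{-1}\bigl(\bigcup_{k\in\widetilde{j}}Q_k\bigr)$ rather than $A_j^{-T}(\cdots)$; this is immaterial, since only the diameter of the spectrum matters for Plancherel--Polya, the entrywise norm in \eqref{prop:eq:bounded} is transpose-invariant, and the matrices $A_j$ used in the paper are diagonal anyway.
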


\begin{proof}
The proof of Lemma \ref{lemma:stability} is an adaptation of \cite[Lemma 2]{Nielsen}, which is included for the sake of completeness. 
Using the estimate derived in \eqref{eq:estimate} together with the representation \eqref{eq:eta} we have
\begin{align}\label{eq:etaestimates1}
&\sup_{x\in\mathbb{R}^d} \norm{\{\eta_{n,j}(x)\}_{n\in\mathbb{Z}^d}}_{\ell_p} \leq C_p|T_j|^{1/2} \quad \text{and} \\ 
&\label{eq:etaestimates2}\sup_{n\in\mathbb{Z}^d} \norm{\eta_{n,j}}_{L_p} \leq C'_p|T_j|^{1/2-1/p}.
\end{align}
Suppose $p\leq 1$. 
Since $\text{supp}(\hat{\eta}_{n,j}) \subset T_j(K_a)$ and $\text{supp}(\widetilde{\phi}^2_j) \subset \cup_{k\in \widetilde{j}} T_{k}(K_a) \subset T_j(K_{a'})$, where $K_{a'}$ is a cube with side-length $2a'$ we have $\text{supp}((\widetilde{\phi}^2_j * \hat{\eta}_{n,j})(T_j\cdot)) \subset K_{2a}$, a cube with side-length $4a$. 
With $\hat{f}(\xi) = \left( \widetilde{\phi}^2_j * \hat{\eta}_{n,j}\right)(T_j(\xi))$ we use Lemma \ref{lemma:nikolskij} to obtain
\begin{align*}
\sum_{n\in\mathbb{Z}^d} |\ip{f,\eta_{n,j}}|^p &= \sum_{n\in\mathbb{Z}^d} \left| \ip{\widetilde{\phi}^2_j(D)f,\eta_{n,j}}\right|^p \leq \sum_{n\in\mathbb{Z}^d} \norm{(\widetilde{\phi}^2_j(D)f)\eta_{n,j}}^p_{L_1} \\
&\leq C_p|T_j|^{1-p} \sum_{n\in\mathbb{Z}^d} \norm{(\widetilde{\phi}^2_j(D)f)\eta_{n,j}}^p_{L_p} \leq C_p|T_j|^{1-\tfrac{p}{2}}\norm{\widetilde{\phi}^2_j(D)f}^p_{L_p},
\end{align*}
where we used \eqref{eq:etaestimates1} in the final step. This proves the first inequality in the lemma for $p\leq 1$.  Now, to prove the other estimate suppose first $p\leq 1$. Using the expression in \eqref{eq:phi} we have
\begin{align*}
\norm{\phi_{j}^2(D)f}_{L_p}^p &= \norm{\mathcal{F}^{-1}\left( \sum_{n \in\mathbb{Z}^d} \ip{\hat{f},\hat{\eta}_{n,j}}\hat{\eta}_{n,j}\right)}_{L_p}^p 
\leq \sum_{n\in\mathbb{Z}^d}|\ip{f,\eta_{n,j}} |^p\norm{\eta_{n,j}}_{L_p}^p \nonumber \\
&\leq C_p'|T_j|^{\tfrac{p}{2}-1} \sum_{n\in\mathbb{Z}^d} |\ip{f,\eta_{n,j}}|^p. \nonumber
\end{align*}
This proves the lemma for $p\leq 1$. 
For $1<p<\infty$ the estimates follows using \eqref{eq:etaestimates1} and \eqref{eq:etaestimates2} for $p=1$ together with Hölder's inequality. We leave the case $p=\infty$ for the reader.
\end{proof}

We now consider a characterization of the homogeneous decomposition spaces $\dot{M}_{p,q}^\beta(\tilde{h})$ in terms of the canonical frame coefficients. For notational convenience let $\eta_{n,j}^p := |T_j|^{1/2-1/p} \eta_{n,j}$.
Then $\eta_{n,j}^p$ denotes the function $\eta_{n,j}$ ''normalized'' in $L_p(\mathbb{R}^d)$ for $p\in (0,\infty]$. We now combine Proposition \ref{prop:reproduce}, Lemma \ref{lemma:stability}, and the fact that $\mathcal{S}_0(\mathbb{R}^d)$ is dense in $\dot{M}_{p,q}^\beta(\tilde{h})$ (see Proposition \ref{prop:complete})
to conclude that the following result holds.

\begin{myprop}\label{prop:framecoeff}
Every tempered distribution modulo polynomials, $f\in \mathcal{S}'\backslash\mathcal{P}$ has an expansion of the form
\begin{equation}\label{eq:expanddistribution}
f = \sum_{j\in J} \sum_{n\in\mathbb{Z}^d} \ip{f,\eta^p_{n,j}}\eta^p_{n,j},
\end{equation}
with convergence in $\mathcal{S}'\backslash\mathcal{P}$. 
Moreover, given $0<p, q\leq \infty$ and $\beta\in \mathbb{R}$ we have the characterization
\begin{equation}\label{eq:equivnorm}
\norm{f}_{\dot{M}_{p,q}^\beta(\tilde{h})} \asymp \norm{\left\{ \tilde{h}(\xi_j)^\beta \left( \sum_{n\in\Z^d} \left|\ip{f,\eta_{n,j}^p}\right|^p \right)^{1/p} \right\}_{j\in J}}_{\ell_q},
\end{equation}
where the constants of the equivalence depend on $p$ and $q$. For $p=\infty$ the sum over $n\in\mathbb{Z}^d$ is changed to $\sup$.
\end{myprop}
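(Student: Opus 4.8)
The plan is to read off the expansion \eqref{eq:expanddistribution} directly from the reproducing identity \eqref{eq:fexpansion} of Proposition \ref{prop:reproduce}, and then to establish the norm equivalence \eqref{eq:equivnorm} by discretizing the $\dot{M}_{p,q}^\beta(\tilde{h})$-norm through the local estimates of Lemma \ref{lemma:stability}. The expansion \eqref{eq:expanddistribution} is nothing but \eqref{eq:fexpansion} rewritten in terms of the $L_p$-normalized atoms $\eta^p_{n,j}=|T_j|^{1/2-1/p}\eta_{n,j}$, so its convergence in $\mathcal{S}'\backslash\mathcal{P}$ is already guaranteed by Proposition \ref{prop:reproduce}; the substance of the statement is the characterization \eqref{eq:equivnorm}.

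First, for $f\in\mathcal{S}_0(\R^d)$, I would combine the two inequalities of Lemma \ref{lemma:stability} with the normalization. Multiplying the first inequality by $|T_j|^{1/2-1/p}$ pulls the dilation factor inside the pairing and makes the powers of $|T_j|$ cancel, giving $(\sum_n|\ip{f,\eta^p_{n,j}}|^p)^{1/p}\le C_p\norm{\widetilde{\phi}^2_j(D)f}_{L_p}$, while the second inequality becomes $\norm{\phi^2_j(D)f}_{L_p}\le C_p'(\sum_n|\ip{f,\eta^p_{n,j}}|^p)^{1/p}$. Thus the chosen normalization is precisely what turns Lemma \ref{lemma:stability} into a clean two-sided comparison of the $p$-th moment of the frame coefficients with $\norm{\phi^2_j(D)f}_{L_p}$, modulo the passage from $\phi^2_j$ to its enlargement $\widetilde{\phi}^2_j=\sum_{k\in\widetilde{j}}\phi_k^2$.

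Next I would weight by $\tilde{h}(\xi_j)^\beta$ and take the $\ell_q$-norm over $j\in J$. The lower bound immediately yields $\norm{f}_{\dot{M}_{p,q}^\beta(\tilde{h})}\lesssim \norm{\{\tilde{h}(\xi_j)^\beta(\sum_n|\ip{f,\eta^p_{n,j}}|^p)^{1/p}\}_{j\in J}}_{\ell_q}$. The upper bound instead produces the enlarged multiplier $\widetilde{\phi}^2_j(D)f$, so I need a change-of-partition step: using the $p$-triangle inequality, the uniform bound on $\#\widetilde{j}$ from the finite-overlap property, and the $\mathcal{Q}$-moderateness of the weight $\tilde{h}(\xi_j)^\beta$ (so that $\tilde{h}(\xi_j)^\beta\asymp\tilde{h}(\xi_k)^\beta$ for $k\in\widetilde{j}$), one shows that replacing $\phi^2_j$ by $\widetilde{\phi}^2_j$ alters the weighted $\ell_q(\ell_p)$-norm by at most a constant. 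This establishes \eqref{eq:equivnorm} for every $f\in\mathcal{S}_0$.

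Finally, the equivalence must be promoted from $\mathcal{S}_0$ to all of $\dot{M}_{p,q}^\beta(\tilde{h})$. For $0<p,q<\infty$ this is routine: $\mathcal{S}_0$ is dense by Proposition \ref{prop:complete}(3), the analysis map $f\mapsto\{\ip{f,\eta^p_{n,j}}\}$ and the synthesis map are bounded between $\dot{M}_{p,q}^\beta(\tilde{h})$ and the associated coefficient space by the two halves above, and the reproducing identity \eqref{eq:expanddistribution} shows that synthesis inverts analysis, so the two-sided estimate passes to the closure. The main obstacle is the endpoint regime $p=\infty$ or $q=\infty$, where $\mathcal{S}_0$ is no longer dense; there I would argue directly on the full space, bounding the analysis operator via the first inequality and bounding the synthesis operator by exploiting that $\hat{\eta}_{n,j}$ is supported in $Q_j$, so $\phi^2_k(D)\eta^p_{n,j}$ survives only for the finitely many $j\in\widetilde{k}$, and then controlling the resulting $L_p$-norm through the decay estimate \eqref{eq:estimate} together with $\norm{\eta^p_{n,j}}_{L_p}\asymp 1$. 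The delicate points throughout are the uniformity in $j$ of all constants, which is guaranteed by \eqref{prop:eq:bounded} and the $\mathcal{Q}$-moderateness, and making rigorous the convergence of the synthesis series in $\mathcal{S}'\backslash\mathcal{P}$ in the endpoint cases.
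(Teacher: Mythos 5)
Your proposal follows exactly the paper's route: the paper's entire proof of this proposition is the single remark that one combines Proposition \ref{prop:reproduce} (giving the expansion), Lemma \ref{lemma:stability} (giving the two-sided coefficient estimates), and the density of $\mathcal{S}_0$ from Proposition \ref{prop:complete}, which is precisely the structure of your argument. In fact you supply more detail than the paper does, by writing out the change-of-partition step from $\phi_j^2$ to $\widetilde{\phi}_j^2$ (finite overlap plus $\mathcal{Q}$-moderateness) and by flagging that the density argument is unavailable when $p=\infty$ or $q=\infty$ --- an endpoint issue the paper passes over in silence and which your direct argument on the full space correctly addresses.
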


Inspired by Proposition \ref{prop:framecoeff} let us define the following coefficient space.
\begin{mydef}
Given $0<p, q\leq \infty$ and $\beta \in \mathbb{R}$, we define the space $\dot{m}_{p,q}^\beta(\tilde{h})$ as the set of coefficients $c=\{c_{n,j}\}_{n\in\mathbb{Z}^d,j\in J} \subset \mathbb{C}$ satisfying
\begin{align*}
\norm{c}_{\dot{m}_{p,q}^\beta(\tilde{h})} &:= \norm{\left\{ \tilde{h}(\xi_j)^{\beta + \frac{d}{2}-\frac{d}{p}} \left(\sum_{n\in\Z^d}|c_{n,j}|^p\right)^{1/p}\right\}_{j\in J}}_{\ell_q} < \infty
\end{align*}
\end{mydef}

Note that the characterization \eqref{eq:equivnorm} in Proposition \ref{prop:framecoeff} ensures that the canonical coefficient operator $C : \dot{M}_{p,q}^\beta(\tilde{h}) \to \dot{m}_{p,q}^\beta(\tilde{h})$ is bounded and that the norms in $\dot{M}_{p,q}^\beta(\tilde{h})$ and $\dot{m}_{p,q}^\beta(\tilde{h})$ are equivalent. Now, if we can verify the existence of a bounded reconstruction operator $R: \dot{m}_{p,q}^\beta(\tilde{h}) \to \dot{M}_{p,q}^\beta(\tilde{h})$ we have shown that the system $\{\eta_{n,j}\}_{n\in\mathbb{Z}^d,j\in J}$ gives an atomic decomposition of the homogeneous decomposition spaces. The following lemma gives a way to define such an operator.

\begin{mylemma}\label{lemma:reconstruction}
Given $0<p, q\leq \infty$ and $\beta\in\mathbb{R}$. For any finite sequence $\{c_{n,j}\}_{n\in\mathbb{Z}^d,j\in J}$ of complex numbers we have
\begin{equation*}
\norm{\sum_{n\in\mathbb{Z}^d, j\in J} c_{n,j} |T_j|^{1/p-1/2}\eta_{n,j}}_{\dot{M}_{p,q}^\beta(\tilde{h})} \leq C\norm{\{c_{n,j}\}_{n\in\mathbb{Z}^d,j\in J}}_{\dot{m}_{p,q}^\beta (\tilde{h})}.
\end{equation*}
\end{mylemma}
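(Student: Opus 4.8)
The plan is to bound the $\dot{M}_{p,q}^\beta(\tilde{h})$-quasi-norm of the reconstructed function blockwise, exploiting the frequency localization of the atoms together with the d-moderateness of $\tilde{h}$. Write $g := \sum_{n\in\mathbb{Z}^d,\,j\in J} c_{n,j}|T_j|^{1/p-1/2}\eta_{n,j}$; since the coefficient sequence is finite, $g\in\mathcal{S}_0(\mathbb{R}^d)$ and no convergence issue arises. By Definition \ref{def:complete} it suffices to control $\norm{\phi_k^2(D)g}_{L_p}$ for each $k\in J$ and then form the weighted $\ell_q$-norm over $k$. The decisive observation is that $\hat{\eta}_{n,j}=\phi_j e_{n,j}$ is supported in $Q_j$, so $\phi_k^2\hat{\eta}_{n,j}\equiv 0$ unless $Q_k\cap Q_j\neq\emptyset$, i.e.\ unless $j\in\widetilde{k}$. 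Consequently only a uniformly bounded number of blocks contribute to $\phi_k^2(D)g$, namely
\[
\phi_k^2(D)g=\sum_{j\in\widetilde{k}}|T_j|^{1/p-1/2}\,\phi_k^2(D)\Big(\sum_{n\in\mathbb{Z}^d} c_{n,j}\eta_{n,j}\Big).
\]

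For each fixed $j\in\widetilde{k}$ I would estimate the inner term in two steps. First, $\sum_n c_{n,j}\eta_{n,j}$ is band-limited with Fourier support in $Q_j$, so the multiplier bound \eqref{eq:bandlimited} (which holds for $\phi_k^2$ verbatim, as $\{\phi_j^2\}_{j\in J}$ is a BAPU by Proposition \ref{prop1}) gives $\norm{\phi_k^2(D)\sum_n c_{n,j}\eta_{n,j}}_{L_p}\le C\norm{\sum_n c_{n,j}\eta_{n,j}}_{L_p}$. Second, for $0<p\le 1$ the $p$-triangle inequality combined with the uniform atom estimate \eqref{eq:etaestimates2} yields $\norm{\sum_n c_{n,j}\eta_{n,j}}_{L_p}\le C|T_j|^{1/2-1/p}\big(\sum_n|c_{n,j}|^p\big)^{1/p}$, the uniform decay \eqref{eq:estimate} of $\mu_j$ being precisely what makes \eqref{eq:etaestimates2} effective; for $1<p\le\infty$ the same bound follows from \eqref{eq:etaestimates1} and \eqref{eq:etaestimates2} via Hölder's inequality, exactly as in the proof of Lemma \ref{lemma:stability}. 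Multiplying by the prefactor $|T_j|^{1/p-1/2}$ then leaves a clean bound for the $j$-th contribution in terms of $\big(\sum_n|c_{n,j}|^p\big)^{1/p}$ and a fixed power of $|T_j|$.

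It remains to reassemble the weighted $\ell_q$-sum. For $j\in\widetilde{k}$, Lemma \ref{lemma1} gives $\tilde{h}(\xi_j)\asymp\tilde{h}(\xi_k)$ and hence $|T_j|\asymp|T_k|\asymp\tilde{h}(\xi_k)^d$, so the outer weight $\tilde{h}(\xi_k)^\beta$ may be replaced by $\tilde{h}(\xi_j)^\beta$ and all surviving powers of $|T_j|\asymp\tilde{h}(\xi_j)^d$ rewritten as powers of $\tilde{h}(\xi_j)$; these combine with $\tilde{h}(\xi_j)^\beta$ to match the weight $\tilde{h}(\xi_j)^{\beta+d/2-d/p}$ defining $\dot{m}_{p,q}^\beta(\tilde{h})$. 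Finally one raises to the power $q$, sums over $k$, and interchanges the order of summation over $k$ and over $j\in\widetilde{k}$; since each $j$ belongs to only boundedly many $\widetilde{k}$ by the finite-overlap property, the resulting double sum is dominated by $C\norm{\{c_{n,j}\}}_{\dot{m}_{p,q}^\beta(\tilde{h})}$, with the $\ell_q$ quasi-triangle inequality invoked when $q\le 1$. I expect the genuine work to lie in this last bookkeeping of the $|T_j|$-powers and in the finite-overlap interchange of summation (an almost-diagonal argument in disguise), together with the $0<p\le 1$ quasi-norm estimate; the remaining ingredients are direct quotations of \eqref{eq:bandlimited}, \eqref{eq:etaestimates2} and Lemma \ref{lemma1}.
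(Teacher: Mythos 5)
Your argument follows the paper's own proof essentially step for step: the same reduction of $\phi_k^2(D)g$ to the neighbours $j\in\widetilde{k}$ via the Fourier supports, the same appeal to \eqref{eq:bandlimited}, the same block estimate $\norm{\sum_n c_{n,j}\eta_{n,j}}_{L_p}\le C|T_j|^{1/2-1/p}\norm{\{c_{n,j}\}_n}_{\ell_p}$ borrowed from the proof of Lemma \ref{lemma:stability}, and the same moderateness/finite-overlap bookkeeping at the end.

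However, the one step you describe loosely is precisely the step that does not work. After multiplying the block estimate by the prefactor $|T_j|^{1/p-1/2}$ from the statement, the powers of $|T_j|$ cancel \emph{exactly}: there is no ``surviving power of $|T_j|$'' left to combine with $\tilde{h}(\xi_j)^\beta$. What the argument actually yields is
\begin{equation*}
\norm{\sum_{n,j} c_{n,j}|T_j|^{1/p-1/2}\eta_{n,j}}_{\dot{M}_{p,q}^\beta(\tilde{h})}
\le C\,\norm{\Big\{\tilde{h}(\xi_j)^{\beta}\Big(\sum_n |c_{n,j}|^p\Big)^{1/p}\Big\}_{j\in J}}_{\ell_q},
\end{equation*}
i.e.\ a bound with weight $\tilde{h}(\xi_j)^{\beta}$, whereas $\dot{m}_{p,q}^\beta(\tilde{h})$ carries the weight $\tilde{h}(\xi_j)^{\beta+d/2-d/p}$. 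Dominating the former by the latter would require $\tilde{h}(\xi_j)^{d/p-d/2}\le C$ uniformly in $j$, which fails whenever $p\ne 2$, since $\tilde{h}(\xi_j)$ is unbounded above (high frequencies) and accumulates at $0$ (low frequencies). A single-atom test ($c_{n_0,j_0}=1$, all other coefficients zero, for which the left-hand side is $\asymp\tilde{h}(\xi_{j_0})^{\beta}$, using $|T_j|\asymp\tilde{h}(\xi_j)^d$, while the right-hand side is $\asymp\tilde{h}(\xi_{j_0})^{\beta+d/2-d/p}$) shows the discrepancy is real and not an artifact of the method. In fairness, the paper's own proof makes exactly the same silent jump in its last display, so the defect is inherited from the source: the normalization $|T_j|^{1/p-1/2}$ in the statement is inconsistent with the weight defining $\dot{m}_{p,q}^\beta(\tilde{h})$. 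The argument you (and the paper) give actually proves the lemma either with the $\dot{m}$-weight replaced by $\tilde{h}(\xi_j)^\beta$, or, equivalently, with the unnormalized sum $\sum_{n,j}c_{n,j}\eta_{n,j}$ on the left-hand side, in which case the factor $|T_j|^{1/2-1/p}\asymp\tilde{h}(\xi_j)^{d/2-d/p}$ from the block estimate survives and produces exactly the stated $\dot{m}$-weight.
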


\begin{proof}
Since finite sequences are dense in $\ell_q$ we may assume $\{c_{n,j'}\}_{n\in\mathbb{Z}^d,j'\in J}$ is finite. 
Let $f = \sum_{n\in\mathbb{Z}^d,j'\in J} c_{n,j'}\eta_{n,j'}$. For $j\in J$ we have
\begin{align}\label{coefficient1}
\norm{\phi_j^2(D)f}_{L_p} &= \norm{\phi_j^2(D) \left(\sum_{j'\in\widetilde{j}} \sum_{n\in\mathbb{Z}^d} c_{n,j'}\tilde{\eta}_{n,j'}\right)}_{L_p} \leq C\sum_{j'\in\widetilde{j}} \norm{\sum_{n\in\mathbb{Z}^d} c_{n,j'}\eta_{n,j'}}_{L_p},
\end{align}
where the last inequality follows since $\phi^2_j(D)$ satisfies \eqref{eq:bandlimited}. Now, since $\eta_{n,j'}$ satisfies the inequalities \eqref{eq:etaestimates1} and \eqref{eq:etaestimates2} we use the same technique as in the proof of Lemma \ref{lemma:stability} to obtain
\begin{equation}\label{eq:coefficient2}
\norm{\sum_{n\in\mathbb{Z}^d} c_{n,j'}\eta_{n,j'}}_{L_p} \leq C|T_{j'}|^{1/2-1/p} \norm{\{c_{n,j'}\}_{n\in\mathbb{Z}^d}}_{\ell_p}, \quad 0<p\leq \infty,
\end{equation}
uniformly in $j\in J$. Combining \eqref{coefficient1} and \eqref{eq:coefficient2} and rearranging terms we get
\begin{equation}\label{eq:coefficient3}
\norm{\phi_j^2(D)\left(f|T_{j'}|^{1/p-1/2}\right)}_{L_p} \leq C \sum_{j'\in \tilde{J}} \norm{\{c_{n,j'}\}_{n\in\mathbb{Z}^d}}_{\ell_p}.
\end{equation}
Using \eqref{eq:coefficient3} we obtain
\begin{align*}
\norm{\sum_{n\in\mathbb{Z}^d, j\in J} c_{n,j} |T_j|^{1/p-1/2}\eta_{n,j}}_{\dot{M}_{p,q}^\beta(\tilde{h})} 
&\leq C \left( \sum_{j\in J} \left( \tilde{h}(\xi_j)^{\beta} \sum_{j'\in \tilde{J}} \norm{\{c_{n,j'}\}_{n\in\mathbb{Z}^d}}_{\ell_p}\right)^q \right)^{1/q} \\
&\leq C\norm{\{c_{n,j}\}_{n\in\mathbb{Z}^d,j\in J}}_{\dot{m}_{p,q}^\beta (\tilde{h})}.
\end{align*}
This proves the lemma. 
\end{proof}

We can now verify that $\{\eta_{n,j}\}_{n\in\mathbb{Z}^d,j\in J}$ gives an atomic decomposition of the homogeneous decomposition spaces.
We define the coefficient operator $C: {\dot{M}_{p,q}^\beta (\tilde{h})} \to {\dot{m}_{p,q}^\beta (\tilde{h})}$ by $Cf = \left\{\ip{f,\eta_{n,j}^p}\right\}_{n,j}$ and the reconstruction operator $R: {\dot{m}_{p,q}^\beta (\tilde{h})} \to {\dot{M}_{p,q}^\beta (\tilde{h})}$ by $\{c_{n,j}\}_{n,j} \to \sum_{n,j} c_{n,j}|T_j|^{1/p-1/2}\eta_{n,j}$.

\begin{mytheorem}\label{thm:banachframe}
Given $0<p, q\leq\infty$ and $\beta \in \mathbb{R}$. Then the coefficient operator $C$ and the reconstruction operator $R$ are both bounded and makes $\dot{M}_{p,q}^\beta (\tilde{h})$ a retract of $\dot{m}_{p,q}^\beta (\tilde{h})$, i.e. $RC = \text{Id}_{\dot{M}_{p,q}^\beta (\tilde{h})}$. In particular, for $p\geq 1$, $\{\eta_{n,j}\}_{n\in\mathbb{Z}^d,j\in J}$ gives an atomic decomposition of the homogeneous decomposition spaces ${\dot{M}_{p,q}^\beta (\tilde{h})}$. 
\end{mytheorem}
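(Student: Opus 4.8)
The plan is to read the theorem off as a packaging of three facts already established: boundedness of the coefficient operator $C$, boundedness of the reconstruction operator $R$, and the left-inverse identity $RC=\text{Id}$; the retract structure and the atomic decomposition then follow formally. Throughout, the one quantitative ingredient I would keep track of is the identity $|T_j| = |\det D_{\vec{a}}(\delta\tilde{h}(\xi_j))| = (\delta\tilde{h}(\xi_j))^d$, so that $|T_j|^{1/p-1/2}\asymp \tilde{h}(\xi_j)^{d/p-d/2}$; this is what reconciles the $|T_j|$-powers appearing in $C$ and $R$ with the $\tilde{h}(\xi_j)^{\beta+d/2-d/p}$ weight defining $\dot{m}_{p,q}^\beta(\tilde{h})$.

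For boundedness of $C$ I would simply invoke the norm characterization \eqref{eq:equivnorm} of Proposition \ref{prop:framecoeff}: it gives $\norm{Cf}_{\dot{m}_{p,q}^\beta(\tilde{h})}\asymp\norm{f}_{\dot{M}_{p,q}^\beta(\tilde{h})}$ for all $f\in\mathcal{S}'\backslash\mathcal{P}$, so $C$ maps $\dot{M}_{p,q}^\beta(\tilde{h})$ boundedly into $\dot{m}_{p,q}^\beta(\tilde{h})$. Boundedness of $R$ is, for finite sequences, exactly Lemma \ref{lemma:reconstruction}; since the estimate there is uniform and finite sequences are dense in $\dot{m}_{p,q}^\beta(\tilde{h})$ when $0<q<\infty$, $R$ extends continuously to all of $\dot{m}_{p,q}^\beta(\tilde{h})$, and for $q=\infty$ one passes to the full series by a limiting (Fatou-type) argument on the $\ell_q$-norm combined with convergence in $\mathcal{S}'\backslash\mathcal{P}$.

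The heart of the assembly is $RC=\text{Id}$. Using $\eta_{n,j}^p=|T_j|^{1/2-1/p}\eta_{n,j}$, the powers of $|T_j|$ in $C$ and $R$ cancel, so for $f\in\dot{M}_{p,q}^\beta(\tilde{h})$,
\[
RCf=\sum_{j\in J}\sum_{n\in\mathbb{Z}^d}\ip{f,\eta_{n,j}^p}\,|T_j|^{1/p-1/2}\eta_{n,j}=\sum_{j\in J}\sum_{n\in\mathbb{Z}^d}\ip{f,\eta_{n,j}}\eta_{n,j}=f,
\]
where the final equality is precisely the frame expansion \eqref{eq:fexpansion} of Proposition \ref{prop:reproduce}, valid with convergence in $\mathcal{S}'\backslash\mathcal{P}$. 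Consequently $CR$ is a bounded idempotent on $\dot{m}_{p,q}^\beta(\tilde{h})$ with range isomorphic to $\dot{M}_{p,q}^\beta(\tilde{h})$, which is the assertion that $\dot{M}_{p,q}^\beta(\tilde{h})$ is a retract of $\dot{m}_{p,q}^\beta(\tilde{h})$.

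Finally, for $p\geq 1$ the spaces lie in the Banach regime, where the bounded pair $(C,R)$ satisfying $RC=\text{Id}$ is by definition an atomic decomposition of $\dot{M}_{p,q}^\beta(\tilde{h})$, with atoms $\{|T_j|^{1/p-1/2}\eta_{n,j}\}_{n,j}$ and analysis coefficients $f\mapsto\ip{f,\eta_{n,j}^p}$. I expect no serious obstacle at this stage: the genuinely analytic work—the multiplier bound \eqref{eq:bandlimited}, the atom estimates \eqref{eq:etaestimates1}--\eqref{eq:etaestimates2}, and the summation over the finite-overlap neighbourhoods $\widetilde{j}$—has already been carried out inside Lemma \ref{lemma:stability}, Lemma \ref{lemma:reconstruction} and Proposition \ref{prop:framecoeff}. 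The only place demanding genuine care is the uniform bookkeeping of the weights $|T_j|\asymp\tilde{h}(\xi_j)^d$, ensuring that $C$ and $R$ land in the stated spaces and compose to the identity rather than to a $\tilde{h}$-weighted variant.
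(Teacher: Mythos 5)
Your proposal is correct and follows essentially the same route as the paper: boundedness of $C$ from the norm characterization \eqref{eq:equivnorm} in Proposition \ref{prop:framecoeff}, boundedness of $R$ by extending Lemma \ref{lemma:reconstruction} from finite to general sequences, and $RC=\text{Id}_{\dot{M}_{p,q}^\beta(\tilde{h})}$ from the frame expansion \eqref{eq:expanddistribution}, with the retract and atomic decomposition statements following formally. The paper's own proof is just a terser version of this assembly (it leaves the $RC=\text{Id}$ step implicit in the commuting diagram), so you have if anything supplied more detail, including the correct bookkeeping $|T_j|\asymp\tilde{h}(\xi_j)^d$.
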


\begin{proof}
It follows from Proposition \ref{prop:framecoeff} that the coefficient operator $C$ is a bounded linear operator and by extending Lemma \ref{lemma:reconstruction} to infinite sequences it can easily be verified that the reconstruction operator $R$ is a bounded linear operator. Thus we can illustrate the retract result of Theorem \ref{thm:banachframe} by the following commuting diagram. 
\[
\begin{tikzcd}
\dot{M}_{p,q}^\beta(\tilde{h}) \arrow{dr}{C} \arrow{rr}{\text{Id}_{\dot{M}^\beta_{p,q}(\tilde{h})}}&& \dot{M}_{p,q}^\beta(\tilde{h})\\
 & \dot{m}^\beta_{p,q}(\tilde{h}) \arrow{ur}{R} 
\end{tikzcd}
\]
\end{proof}

We end this section with an interesting corollary to the characterization \eqref{eq:equivnorm} given in Proposition \ref{prop:framecoeff}.
\begin{mycor}\label{korollar1}
We have the following (quasi)-norm equivalence on $\dot{M}_{p,q}^\beta (\tilde{h})$.
\begin{equation*}
\norm{f}_{\dot{M}_{p,q}^\beta (\tilde{h})} \asymp \inf\left\{\norm{\{c_{n,j}\}_{n,j}}_{\dot{m}_{p,q}^\beta (\tilde{h})} \; : \; f= \sum_{n,j} c_{n,j} |T_j|^{1/p-1/2}\eta_{n,j}\right\}.
\end{equation*}
\end{mycor}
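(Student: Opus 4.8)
The plan is to prove the two one-sided estimates separately. Write $N(f)$ for the infimum on the right-hand side, taken over all sequences $c=\{c_{n,j}\}_{n,j}\in\dot{m}_{p,q}^\beta(\tilde{h})$ for which $f=\sum_{n,j}c_{n,j}|T_j|^{1/p-1/2}\eta_{n,j}$, i.e. $f=R(c)$ for the reconstruction operator $R$ of Theorem \ref{thm:banachframe}. All the analytic content needed is already available: the synthesis bound of Lemma \ref{lemma:reconstruction}, the characterization \eqref{eq:equivnorm} of Proposition \ref{prop:framecoeff}, and the retract identity $RC=\mathrm{Id}$ of Theorem \ref{thm:banachframe}. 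What remains is the standard argument showing that, for a retract, the canonical-coefficient norm and the infimum over all representations coincide up to constants.

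First I would show $\norm{f}_{\dot{M}_{p,q}^\beta(\tilde{h})}\leq C\,N(f)$. Fix any admissible representation $f=R(c)$. Since $R$ is bounded from $\dot{m}_{p,q}^\beta(\tilde{h})$ into $\dot{M}_{p,q}^\beta(\tilde{h})$ — this is Lemma \ref{lemma:reconstruction}, extended from finite to infinite sequences exactly as in the proof of Theorem \ref{thm:banachframe} — we have $\norm{f}_{\dot{M}_{p,q}^\beta(\tilde{h})}=\norm{R(c)}_{\dot{M}_{p,q}^\beta(\tilde{h})}\leq C\norm{c}_{\dot{m}_{p,q}^\beta(\tilde{h})}$, with $C$ independent of $f$ and of the particular representation. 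Taking the infimum over all admissible $c$ gives the claimed inequality, the set of such $c$ being nonempty by the canonical representation produced below.

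Next I would establish the reverse estimate $N(f)\leq C\norm{f}_{\dot{M}_{p,q}^\beta(\tilde{h})}$ by exhibiting a single good representation. The natural candidate is the canonical one: setting $c^{0}:=Cf=\{\ip{f,\eta_{n,j}^p}\}_{n,j}$, the identity $RC=\mathrm{Id}$ from Theorem \ref{thm:banachframe} shows that $f=R(c^{0})=\sum_{n,j}\ip{f,\eta_{n,j}^p}|T_j|^{1/p-1/2}\eta_{n,j}$, so $c^{0}$ is admissible. Hence $N(f)\leq\norm{c^{0}}_{\dot{m}_{p,q}^\beta(\tilde{h})}=\norm{Cf}_{\dot{m}_{p,q}^\beta(\tilde{h})}$, and the characterization \eqref{eq:equivnorm} (equivalently, the boundedness of $C$ recorded after Proposition \ref{prop:framecoeff}) yields $\norm{Cf}_{\dot{m}_{p,q}^\beta(\tilde{h})}\asymp\norm{f}_{\dot{M}_{p,q}^\beta(\tilde{h})}$. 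Combining the two inequalities proves the equivalence.

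I do not anticipate a substantial obstacle, as the difficult estimates are already in hand; the only points demanding care are purely technical. One must check that the synthesis series $\sum_{n,j}c_{n,j}|T_j|^{1/p-1/2}\eta_{n,j}$ defining an admissible representation converges in $\mathcal{S}'\backslash\mathcal{P}$, and that $R$ is bounded on genuinely infinite coefficient sequences rather than only finite ones. Both follow from the density of finite sequences in $\dot{m}_{p,q}^\beta(\tilde{h})$ together with the uniform-in-$j$ decay estimates \eqref{eq:etaestimates1} and \eqref{eq:etaestimates2}, precisely as used when extending Lemma \ref{lemma:reconstruction} in the proof of Theorem \ref{thm:banachframe}.
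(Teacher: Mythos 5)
Your proof is correct and follows exactly the route the paper intends: the paper leaves Corollary \ref{korollar1} implicit as a consequence of the retract structure, namely the boundedness of $R$ (Lemma \ref{lemma:reconstruction} extended to infinite sequences) for the inequality $\norm{f}_{\dot{M}_{p,q}^\beta(\tilde{h})}\leq C\,N(f)$, and the admissibility of the canonical coefficients $Cf$ via $RC=\mathrm{Id}$ together with the characterization \eqref{eq:equivnorm} for the reverse inequality. Nothing essential is missing.
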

The corollary shows that the canonical frame coefficients $\left\{\ip{f,\eta^p_{n,j}}\right\}_{n,j}$ are (up to a constant) the sparsest choice for expanding $f$ as in \eqref{eq:expanddistribution},
in the way that they minimize the $\dot{m}^\beta_{p,q}(\tilde{h})$-norm.

\section{Homogeneous Anisotropic \texorpdfstring{$\alpha$}--Modulation Spaces}\label{sec:examples}
In this section we provide an example of what we consider the natural extension of $\alpha$-modulation spaces to the homogeneous setting. We will need a hybrid regulation function as defined in Definition \ref{def:hfunction}. We use a quasi-distance $\d(\xi,\zeta) := |\xi-\zeta|_{\vec{a}}$ induced by the anisotropic norm $|\cdot|_{\vec{a}}$ to generate tight frames for homogeneous anisotropic Besov spaces and homogeneous anisotropic $\alpha$-modulation spaces. 

In the simple inhomogeneous setup on the real line, an $\alpha$-covering can easily be obtained from the knots $\pm n^\beta$, $n\in\mathbb{N}$, taking $\beta=1/(1-\alpha)$ for $0\leq \alpha<1$, while in the limiting (Besov) case $\alpha=1$, we simply use dyadic knots $\pm 2^j$, $j\in\mathbb{N}$. Now, in the Besov case, we add the low frequency knots $\pm 2^{-j}$, $j\in\mathbb{N}$, to obtain a full decomposition eventually yielding homogeneous Besov spaces. Notice that the low frequency knots can be considered the image under $\xi\rightarrow 1/\xi$ of the high frequency knots. We copy this process for the $\alpha$-covering obtaining low-frequency knots $\pm n^{-\beta}$, $n\in\mathbb{N}$, that can be seen to satisfy the geometric ``rule'' $|n^{-\beta}-(n+1)^{-\beta}|\asymp n^{-\beta(2-\alpha)}$, while the high-frequency knots satisfy $|(n+1)^\beta-n^\beta|\asymp n^{\alpha\beta}$. 

Inspired by these considerations, we define a general hybrid regulation function as $\tilde{h}_{\vec{a}}^\alpha(\xi) := \rho h_1(\xi) + (1-\rho)h_2(\xi)$, where $\rho$ satisfies \eqref{eq:rho}, $h_1(\xi) = |\xi|_{\vec{a}}^{2-\alpha}$ and $h_2(\xi) = |\xi|_{\vec{a}}^{\alpha}$, with $\alpha \in [0,1]$ fixed. 
Observe that the functions $h_1, h_2$ satisfy the conditions in \eqref{eq:h1}, \eqref{eq:h2} and they satisfy the condition of d-moderateness separately. So, according to Lemma \ref{lemma1}, $\tilde{h}_{\vec{a}}^\alpha$ is d-moderate. 
Note that any covering ball $B_1$ associated with $h_1$ satisfies the following geometric rule:
\begin{equation*}
\xi\in B_1 \Rightarrow |\xi|_{\vec{a}}^{(2-\alpha)d} \asymp |B_1|.
\end{equation*}
Whereas any covering ball $B_2$ associated with $h_2$ satisfies
\begin{equation*}
\xi \in B_2 \Rightarrow |\xi|_{\vec{a}}^{\alpha d} \asymp |B_2|.
\end{equation*}
In the transition zone between $h_1(\xi)$ and $h_2(\xi)$, that is, when $\tfrac{2}{3} < |\xi|_{\vec{a}} < \tfrac{4}{3}$, the volume of the covering balls $\widetilde{B}$ satisfy $|\widetilde{B}| \asymp 1$. 
With $|\cdot|_{\vec{a}}$ defined as in Definition \ref{def:aninorm},
we use Proposition \ref{thm:sdc} to conclude that $\tilde{h}_{\vec{a}}^\alpha$ determines one equivalence class of structured admissible coverings. 
Let us now consider different values of $\alpha \in [0,1]$ to see which spaces they give rise to. 

\subsection*{The Case \texorpdfstring{$\alpha = 1$}{alpha=1}}
For the underlying decomposition of $\mathbb{R}^d\backslash\{0\}$ we will use cubes and corridors. 
Given $\vec{a} = (a_1,\ldots,a_d)$ we define the cubes
\begin{equation*}
Q_j := \{\xi : |\xi_i| \leq 2^{ja_i}, \quad i=1,\ldots,d\}, \quad \text{for} \; j\in\mathbb{Z},
\end{equation*}
and the corridors $K_j = Q_j\backslash Q_{j-1}$ for $j\in\mathbb{Z}$. Let $E:= E_2^d\backslash E_1^d$ where $E_2 = \{\pm 1, \pm 2 \}$ and $E_1 := \{\pm 1\}$. For each $j\in\mathbb{Z}$ and $k\in E$ we define
\begin{equation*}
P_{j,k} := \{\xi\in\mathbb{R}^d\backslash\{0\} : \text{sgn}(\xi_i) = \text{sgn}(k_i), \; \text{and} \; (|k_i|-1)2^{j-1} \leq |\xi_i|^{1/a_i} \leq |k_i|2^{j-1}\}.
\end{equation*}
Then $K_j \subset \cup_{k\in E} P_{j,k}$ and the family $\{P_{j,k}\}_{j\in\mathbb{Z},k\in E}$ gives an anisotropic decomposition of $\mathbb{R}^d\backslash\{0\}$. 
The cubes $P_{j,k}$ can be generated by a family of affine transformations $\mathcal{T}$. Let $b_{j,k}$ be the center of the cube $P_{j,k}$, and define the function $B:E \to \mathbb{R}^{d\times d}$ by
\begin{equation*}
B(k) = \text{diag}(h(k)_1,\ldots,h(k)_d), \quad \text{where} \; h(k)_i 
= 
\begin{cases}
 2^{-(a_i+1)}& \text{if \; $|k_i| = 1$,} \\ 
 (1-2^{-a_i})/2& \text{if \; $|k_i| = 2$.}
 \end{cases}
\end{equation*}
Then the family $\mathcal{T} = \{T_{j,k}\}_{j\in\mathbb{Z},k\in E}$ given by $T_{j,k}\xi = D_{\vec{a}}(2^j)B(k)\xi + b_{j,k}$ generates the sets $P_{j,k}$. It can be verified that the covering system $\mathcal{T} = \{T_{j,k}\}_{j\in\mathbb{Z},k\in E}$ is equivalent to those used for the homogeneous anisotropic Besov spaces, see e.g. \cite[p. 223]{HansTriebel} for the inhomogeneous case. In fact $\dot{M}_{p,q}^\beta(\tilde{h}^{1}_{\vec{a}}, |\cdot|_{\vec{a}}) = \dot{B}_{p,q}^{\vec{a},\beta}$, so the tight frame $\{\eta_{n,j}\}_{n\in\mathbb{Z}^d,j\in J}$ from Section \ref{section:frames} gives an atomic decomposition of $\dot{B}_{p,q}^{\vec{a},\beta}$ and \eqref{eq:equivnorm} in Proposition \ref{prop:framecoeff} gives a characterization of the norm on $\dot{B}_{p,q}^{\vec{a},\beta}$. We refer to Bownik \cite{MR2179611} for a much more detailed study of such Besov-type spaces.

\subsection*{The Case \texorpdfstring{$0 \leq \alpha < 1$}{0<alpha<1}}
This is a more interesting case as we obtain new families of spaces, which we will call homogeneous anisotropic $\alpha$-modulation spaces. Since the assumptions of Proposition \ref{thm:sdc} are satisfied we conclude that the space $\dot{M}_{p,q}^\beta(\tilde{h}_{\vec{a}}^\alpha,|\cdot|_{\vec{a}})$ is well-defined in the sense that it only depends on the function $\tilde{h}_{\vec{a}}^\alpha$ up to equivalence of norms and is independent of the particular choice of BAPU. According to Proposition \ref{thm:sdc} there exists a structured admissible covering of $\mathbb{R}^d\backslash\{0\}$ associated with $\tilde{h}_{\vec{a}}^\alpha$. However, we do not (in general) know of any explicitly given structured covering. Nevertheless, we can assume that $\mathcal{T} = \{A_j \cdot + b_j\}_{j\in J}$ is one of the equivalent structured coverings given by Proposition \ref{thm:sdc}. Then the tight frame $\{\eta_{n,j}\}_{n\in\mathbb{Z}^d,j\in J}$ gives an atomic decomposition of $\dot{M}_{p,q}^{\vec{a},\beta,\alpha}$ and \eqref{eq:equivnorm} in Proposition \ref{prop:framecoeff} gives a characterization of the norm on $\dot{M}_{p,q}^{\vec{a},\beta,\alpha}$.
\appendix
\section{Proof of some technical lemmas}\label{sec:appA}
In this section we state some of the lemmas used throughout this paper.

\begin{mylemma}\label{lemma12}
Given $p \in (0,\infty]$, and an invertible affine transformation $T_j := A_j\cdot + b_j$. For a function $f\in L_p(\mathbb{R}^d)$ let $\hat{f}_j(\xi) := \hat{f}(T_j^{-1}\xi)$. Then
\begin{equation*}
\norm{f_j}_{L_p} = |T_j|^{1-1/p}\norm{f}_{L_p}, \quad 0<p\leq \infty. 
\end{equation*}
\end{mylemma}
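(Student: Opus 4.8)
The plan is to first compute the explicit form of $f_j$ in direct space, and then evaluate its $L_p$ norm by a change of variables. Writing $T_j^{-1}\xi = A_j^{-1}\xi - A_j^{-1}b_j$ and applying the inverse Fourier transform to $\hat{f}_j(\xi) = \hat{f}(T_j^{-1}\xi)$, I would substitute $\zeta = T_j^{-1}\xi$ (equivalently $\xi = A_j\zeta + b_j$) in the defining integral. This substitution produces a Jacobian factor $|\det A_j| = |T_j|$, splits the exponential as $\e^{ix\cdot\xi} = \e^{i(A_j^Tx)\cdot\zeta}\,\e^{ix\cdot b_j}$, and recovers the inverse transform of $\hat{f}$ evaluated at $A_j^Tx$. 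The upshot is the representation
\begin{equation*}
f_j(x) = |T_j|\,\e^{ix\cdot b_j}\, f(A_j^Tx).
\end{equation*}

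With this identity in hand, the norm computation is routine and valid across the whole range $0<p\le\infty$, since it only uses the integral expression for $\norm{\cdot}_{L_p}^p$. As the modulation factor $\e^{ix\cdot b_j}$ is unimodular, it does not affect $|f_j(x)|$, so for $0<p<\infty$ I would write
\begin{equation*}
\norm{f_j}_{L_p}^p = |T_j|^p \int_{\R^d} |f(A_j^Tx)|^p \, \d x.
\end{equation*}
A second change of variables $y = A_j^Tx$ introduces the reciprocal Jacobian factor $|\det (A_j^T)^{-1}| = |T_j|^{-1}$, yielding $\norm{f_j}_{L_p}^p = |T_j|^{p-1}\norm{f}_{L_p}^p$; taking $p$-th roots gives the claimed scaling $|T_j|^{1-1/p}$. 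The case $p=\infty$ follows directly from the same representation, since composition with the invertible linear map $A_j^T$ and multiplication by a unimodular factor leave the essential supremum unchanged, while the prefactor $|T_j|$ matches $|T_j|^{1-1/\infty} = |T_j|$.

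There is no genuine obstacle here; the only point requiring care is the bookkeeping of the two Jacobian factors and the verification that the translation $b_j$ enters solely through the unimodular modulation $\e^{ix\cdot b_j}$. This is precisely why the answer depends on $A_j$ only through $|T_j| = |\det A_j|$ and is entirely insensitive to the shift $b_j$, which is exactly the scaling behaviour exploited in the proof of Proposition \ref{prop1}.
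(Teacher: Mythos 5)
Your proof is correct and follows essentially the same route as the paper: derive the direct-space representation $f_j(x) = |T_j|\e^{ix\cdot b_j} f(A_j^Tx)$ by a change of variables in the inverse Fourier integral, then obtain the norm identity by a second substitution. The only difference is that you spell out the Jacobian bookkeeping and the $p=\infty$ case explicitly, which the paper leaves implicit.
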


\begin{proof}
If $T_j = A_j \cdot + b_j$, then $\hat{f}_j(\xi) = \hat{f}(A_j^{-1}(\xi - b_j))$. By a simple substitution we find that
\begin{align} \label{eq:lemma1}
f_j(x) = |T_j|\e^{ix\cdot b_j} f(A_j^T x).
\end{align}
Taking the $L_p$-norm of the expression in \eqref{eq:lemma1} and substituting once more we obtain
\begin{align*}
\norm{f_j}_{L_p} = |T_j| \left( \int_{\mathbb{R}^d} \left| \e^{ix\cdot b_j} f(A_j^Tx)\right|^p \d x \right)^{1/p} = |T_j|^{1-1/p}\norm{f}_{L_p}.
\end{align*}
\end{proof}

\begin{mylemma}\label{lemma:nikolskij}
Suppose $f \in S_0(\mathbb{R}^d)$ satisfies $\text{supp}(\hat{f}) \subset B_d(\xi_j,\delta\tilde{h}(\xi_j))$ for $j\in J$. Given an invertible affine transformation $T_j$, let $\hat{f}_j(\xi) := \hat{f}(T_j^{-1}\xi)$. Then, for $0<p\leq q \leq \infty$ 
\begin{equation*}
\norm{f_j}_{L_q} \leq C |T_j|^{1/p-1/q} \norm{f_j}_{L_p},
\end{equation*}
for some constant $C$ independent of $T_j$. 
\end{mylemma}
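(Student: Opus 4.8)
The plan is to prove this as an anisotropic Nikolskii inequality for band-limited functions, isolating the affine scaling (which produces the factor $|T_j|^{1/p-1/q}$) from a single scale-invariant reference estimate whose constant is uniform in $j$. First I would reduce to the endpoint $q=\infty$. For $0<p\le q<\infty$ the elementary pointwise bound $\norm{f_j}_{L_q}^q \le \norm{f_j}_{L_\infty}^{q-p}\norm{f_j}_{L_p}^{p}$ yields $\norm{f_j}_{L_q}\le \norm{f_j}_{L_\infty}^{1-p/q}\norm{f_j}_{L_p}^{p/q}$, and since $(1/p)(1-p/q)=1/p-1/q$ it suffices to establish the single inequality $\norm{f_j}_{L_\infty}\le C|T_j|^{1/p}\norm{f_j}_{L_p}$.

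Next I would normalize away the transformation. By the anisotropic scaling identity \eqref{eq:conditionaninorm}, the ball $B_d(\xi_j,\delta\tilde h(\xi_j))$ is the affine image of a fixed unit ball under a map of the form $\xi\mapsto \xi_j+D_{\vec a}(\delta\tilde h(\xi_j))\xi$, so after composing with this map the relevant function is band-limited to a fixed reference set. Tracking the induced changes of the $L_p$- and $L_\infty$-norms through Lemma \ref{lemma12} (a translation in frequency is an $L_p$-isometry after multiplication by a character, and the dilation $D_{\vec a}$ contributes precisely the power of $|T_j|$ occurring in the target, since $|T_j|=(\delta\tilde h(\xi_j))^d$), the problem reduces to the scale-invariant reference estimate $\norm{F}_{L_\infty}\le C\norm{F}_{L_p}$, with $C$ independent of $j$, for every $F$ whose Fourier transform is supported in the fixed unit ball $B_d(0,1)$.

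For $p\ge 1$ this reference estimate is immediate: I would fix once and for all a function $\Theta\in\mathcal S(\R^d)$ with $\hat\Theta\equiv 1$ on $B_d(0,1)$, so that $\hat F=\hat F\,\hat\Theta$ and hence $F=F*\Theta$. Young's inequality (or Hölder's with the conjugate exponent $p'$) then gives $\norm{F}_{L_\infty}\le \norm{F}_{L_p}\norm{\Theta}_{L_{p'}}$, and the constant $\norm{\Theta}_{L_{p'}}$ is plainly independent of $j$.

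The main obstacle is the quasi-Banach range $0<p<1$, where Young's inequality is no longer available. Here I would invoke a Bernstein-type estimate: since $F$ is band-limited to the fixed set $B_d(0,1)$, so are all of its partial derivatives, and $\norm{\partial_i F}_{L_\infty}\le C\norm{F}_{L_\infty}$ uniformly. This furnishes a uniform modulus of continuity, so on a ball of fixed radius around any point where $|F|$ nearly attains its supremum the value of $|F|$ remains comparable to $\norm{F}_{L_\infty}$; integrating $|F|^p$ over that ball then forces $\norm{F}_{L_\infty}^p\lesssim \int_{\R^d}|F|^p=\norm{F}_{L_p}^p$, which is exactly the desired estimate. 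This passage from the pointwise supremum to an $L_p$-average is precisely the content of the band-limited multiplier result \cite[Proposition 1.5.1]{HansTriebel} already used for \eqref{eq:bandlimited}, and the only delicate point is to keep every constant independent of $j$ — which is guaranteed because, after the normalization of the second step, the frequency support $B_d(0,1)$ is a fixed reference set.
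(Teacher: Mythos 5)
Your proposal is correct, and its skeleton — rescale by the affine map to reduce to a band-limited estimate on a fixed reference set, with the Jacobian producing exactly $|T_j|^{1/p-1/q}$ — is the same as the paper's; but where the paper's proof is two lines (apply Lemma \ref{lemma12} twice to pass between $f$ and $f_j$, and in between invoke the Plancherel--Polya--Nikol'skij inequality $\norm{f}_{L_q}\leq C\norm{f}_{L_p}$ as a black box from \cite[Proposition 1.3.2]{HansTriebel}), you instead prove that reference inequality from scratch: the reduction to the endpoint $q=\infty$ via $\norm{F}_{L_q}\leq \norm{F}_{L_\infty}^{1-p/q}\norm{F}_{L_p}^{p/q}$, the identity $F=F*\Theta$ with H\"older for $p\geq 1$, and the Bernstein/modulus-of-continuity argument for $0<p<1$ are all standard and correct, with constants that visibly depend only on the fixed unit ball and hence not on $j$. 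What the paper's route buys is brevity; what yours buys is self-containedness and, more importantly, transparency about the one point the lemma's formulation obscures: as literally stated, the hypothesis puts $\mathrm{supp}(\hat f)$ in the $j$-dependent ball $B_d(\xi_j,\delta\tilde{h}(\xi_j))$, under which reading the Nikol'skij constant for $f$ would itself depend on $j$ and the correct exponent would be $|T_j|^{2(1/p-1/q)}$; the reading consistent with the stated conclusion and with both applications of the lemma (in Lemma \ref{lemma:stability} and in Appendix \ref{sec:appB}) is that the \emph{rescaled} function $f_j$ is the one band-limited to the image under $T_j$ of a fixed set, i.e.\ $f$ is band-limited to a fixed set. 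Your normalization step — whose map $\xi\mapsto \xi_j+D_{\vec{a}}(\delta\tilde{h}(\xi_j))\xi$ is, up to the fixed translation by $p_0$, the transformation $T_j$ itself — enforces precisely this reading, which is why you obtain a single power of $|T_j|$ with a $j$-independent constant. Two small corrections: your closing attribution should be to \cite[Proposition 1.3.2]{HansTriebel} rather than \cite[Proposition 1.5.1]{HansTriebel} (the latter is the multiplier estimate behind \eqref{eq:bandlimited}, not the Nikol'skij inequality), and in the $p<1$ step one should work at a point where $|F|$ exceeds half its supremum rather than assume the supremum is attained — your phrase ``nearly attains'' already accommodates this.
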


\begin{proof}
Using Lemma \ref{lemma12} and the Plancherel-Polya-Nikol'skij inequality \cite[Proposition 1.3.2]{HansTriebel} we obtain 
\begin{align*}
\norm{f_j}_{L_q} &= |T_j|^{1-1/q}\norm{f}_{L_q} \leq |T_j|^{1-1/q} C\norm{f}_{L_p} \\
& \leq C |T_j|^{1/p-1/q} \norm{f_j}_{L_p}.
\end{align*}
\end{proof}

\section{Proof of Proposition \ref{prop:complete} }\label{sec:appB}
Here we provide the proof of Proposition \ref{prop:complete}. We begin by recalling that any sequence of complex numbers satisfies
$\norm{\{a_k\}_k}_{\ell_{q_1}} \leq \norm{\{a_k\}_k}_{\ell_{q_0}}$ for $0<q_0\leq q_1 \leq \infty$. 
Let $\mathcal{Q} = \{Q_j\}_{j\in J}$ be a structured admissible covering, $0<p<\infty$ and $\beta \in \mathbb{R}$. An immediate consequence of the above yields the following embedding
\begin{equation}\label{eq:modembed}
\dot{M}_{p,q_0}^\beta(\tilde{h}) \hookrightarrow \dot{M}_{p,q_1}^\beta(\tilde{h}),
\end{equation}
for $0<q_0\leq q_1 \leq \infty$. 

\begin{proof}[Proof of Proposition \ref{prop:complete}]
We begin by proving the left hand-side of \eqref{eq:embeds}. According to \eqref{eq:modembed} we may assume $q<\infty$. 
Take $f\in \mathcal{S}_0(\mathbb{R}^d)$ and let $N > d/p$. We apply Hölder's inequality to obtain
\begin{align} \label{eq:s0embed}
\norm{f}^q_{\dot{M}_{p,q}^\beta (\tilde{h})} &= 
\sum_{j\in J} \tilde{h}(\xi_j)^{\beta q} \norm{\phi_j^2(D)f}_{L_p}^{q} 
\leq C \sum_{j\in J} |\xi_j|_{\vec{a}}^{\beta q} \norm{(1+|\cdot|)^N \mathcal{F}^{-1}\phi_j^2\hat{f}}_{L_\infty}^q \nonumber \\
&\leq C \sum_{j\in J} |\xi_j|_{\vec{a}}^{\beta q} \sum_{|\beta|\leq N} \norm{\mathcal{F}^{-1}\partial^\beta [\phi_j^2\hat{f}]}_{L_\infty}^q \nonumber \\
&\leq C \sum_{j\in J} |\xi_j|_{\vec{a}}^{\beta q} \sum_{|\beta|\leq N} \norm{\partial^\beta[\phi_j^2\hat{f}]}_{L_1}^q.
\end{align}
We consider the $L_1$-norm in \eqref{eq:s0embed} in several steps. First, using Leibniz's formula for $\partial^\beta \phi_j^2 \hat{f}$ we derive $\partial^\beta \phi_j^2 \hat{f} = \sum_{|\alpha+\eta| = |\beta|} C_{\alpha,\eta} \partial^\alpha \phi_j^2 \cdot \partial^\eta \hat{f}$, for certain constants $C_{\alpha,\eta}$. Consider first $\partial^\eta \hat{f}$. Since $f\in S_0(\mathbb{R}^d)$ we have by \eqref{eq:seminormS0},
	\begin{equation}\label{eq:partialf}
	|\partial^\eta \hat{f}(\xi)| \leq C_M \left( |\xi|^M + |\xi|^{-M}\right)^{-1} \norm{f}_M,
	\end{equation}
	for any $M \in \mathbb{N}$. Now, for $ \partial^\alpha \phi_j^2(\xi)$ we first recall that
	\begin{equation*}
		\partial^\alpha \phi_j^2(\xi)= \partial^\alpha \frac{\Phi^2(T_j^{-1}\xi)}{\sum_{k\in J} \Phi^2(T_{k}^{-1}\xi)}.
	\end{equation*}
	With this expression we may use the formula in \eqref{eq:Leib}. By a similar approach as in the proof of Proposition \ref{prop1}, we use the chain rule to obtain 		$\partial^\alpha \phi_j^2(\xi) =  \sum_{\mu: |\mu|=|\alpha|} p_{\mu} \partial^\mu \Phi^2$,
where $p_{\mu}$ are monomials of degree $|\alpha|$ in the entries of $A_j^{-1}$. 
Now, if $j\in J_2$ the entries in $A_j^{-1}$ are uniformly bounded (see the proof of Lemma \ref{lemma:konvergens}).
If $j\in J_1$ we recall that $h_1(\xi) \geq c_0|\xi|_{\vec{a}}^r$ for some $c_0 > 0$ and $r\geq 1$. Thus each entry in $A_j^{-1}$ is bounded by $|\xi_j|_{\vec{a}}^{-r}$.
Combining this and using \eqref{eq:anormJ1} it follows that for $L > \alpha_1 rd$,
\begin{equation}\label{eq:partialphi}
\left| \partial^\alpha \phi_j^2(\xi)\right| \leq C_\alpha \left(|\xi|^L + |\xi|^{-L}\right) \chi_{Q_j}(\xi).
\end{equation}
Combining \eqref{eq:partialf} and \eqref{eq:partialphi} with the expression in \eqref{eq:s0embed} we get
\begin{flalign*}
&\norm{f}^q_{\dot{M}_{p,q}^\beta(\tilde{h})} \\
&\leq C \sum_{j\in J} |\xi_j|_{\vec{a}}^{\beta q} \sum_{|\beta|\leq N} \left(\int_{\mathbb{R}^d} \left|\partial^\beta \phi_j^2\hat{f}(\xi)\right| \d \xi \right)^q \\
&\leq C \sum_{j\in J} |\xi_j|_{\vec{a}}^{\beta q} \sum_{|\alpha|,|\eta| \leq N} \int_{\mathbb{R}^d} \left( \left| \partial^\alpha \phi_j^2(\xi) \cdot \partial^\eta \hat {f} (\xi) \right| \right)^q \d \xi \\
&\leq C \sum_{j\in J} |\xi_j|_{\vec{a}}^{\beta q} \int_{\mathbb{R}^d} \left[ \left( |\xi|^L+|\xi|^{-L}\right) \chi_{Q_j}(\xi) \cdot \norm{f}_M \left(|\xi|^M+|\xi|^{-M}\right)^{-1} \right]^q \d \xi \\
&\leq C \norm{f}_M^q \sum_{j\in J} |\xi_j|_{\vec{a}}^{\beta q} \sup_{\xi \in Q_j} \left[ (|\xi|^M+|\xi|^{-M})^{-1/2} \int_{\mathbb{R}^d} (|\xi|^L+|\xi|^{-L})(|\xi|^M+|\xi|^{-M})^{-1/2} \right]^q \d \xi \\
&\leq C \norm{f}_M^q \sum_{j\in J} |\xi_j|_{\vec{a}}^{\beta q} \sup_{\xi \in Q_j} \left[ (|\xi|^M+|\xi|^{-M})^{-1/2} \right]^q,
\end{flalign*}
for $M$ chosen such that $\frac{1}{2}M > L + d + 1$. 
We next consider two cases. 
Recall the estimates in \eqref{eq:anormJ2}, \eqref{eq:anormJ1}, and note that, since $h_1,h_2$ are continuous functions we can adjust the coefficients $c_1,c_2$ appearing in \eqref{eq:anormJ2} such that these inequalities holds for all $|\xi_j|_{\vec{a}}$ with $j\in J_2$. Thus, for $j\in J_2$ we increase the value of $M$, if needed, such that $\frac{1}{2}M > \alpha_1 (- \beta-(d+1)/q)$. Then
\begin{align}\label{eq:embedJ2}
	\norm{f}^q_{\dot{M}_{p,q}^\beta(\tilde{h})} &\leq C \norm{f}_M^q \sum_{j\in J_2} |\xi_j|_{\vec{a}}^{\beta q} \sup_{\xi\in Q_j} \left[(|\xi|^M + |\xi|^{-M})^{-1/2}\right]^q \nonumber \\
	&\leq C \norm{f}_M^q \sum_{j\in J_2} |\xi_j|_{\vec{a}}^{\beta q} |\xi_j|_{\vec{a}}^{(-\beta - (d+1)/q)q} \nonumber \\
	&\leq C \norm{f}_M^q \sum_{j\in J_2} |\xi_j|_{\vec{a}}^{-(d+1)} \nonumber \\
	&\leq C \norm{f}_M^q.
\end{align}
Now, let $j\in J_1$. By increasing the value of $M$, if needed, we can assume that $\frac{1}{2}M > \alpha_2 (rd/q -\beta)$. Then
\begin{align}\label{eq:embedJ1}
\norm{f}^q_{\dot{M}_{p,q}^\beta(\tilde{h})} &\leq C \norm{f}_M^q \sum_{j\in J_1} |\xi_j|_{\vec{a}}^{\beta q} \sup_{\xi\in Q_j} \left[(|\xi|^M + |\xi|^{-M})^{-1/2}\right]^q \nonumber \\
&\leq C \norm{f}_M^q \sum_{j\in J_1} |\xi_j|_{\vec{a}}^{\beta q} |\xi_j|_{\vec{a}}^{(rd/q-\beta)q} \nonumber \\
&\leq C \norm{f}_M^q \sum_{j\in J_1} |\xi_j|_{\vec{a}}^{rd} \nonumber \\
&\leq C \norm{f}_M^q.
\end{align}
By \eqref{eq:embedJ2} and \eqref{eq:embedJ1} we conclude that $\mathcal{S}_0 \hookrightarrow \dot{M}_{p,q}^\beta(\tilde{h})$ for any $M$ satisfying $\frac{1}{2}M > \max\{L+d+1, \alpha_1(-\beta-(d+1)/q), \alpha_2 (rd/q - \beta)\}$. 
	
Let us now prove the right-hand side of \eqref{eq:embeds} with $q=\infty$. Let $f\in \dot{M}_{p,\infty}^\beta(\tilde{h})$ and $\theta \in \mathcal{S}(\mathbb{R}^d)$. Then,
\begin{align*}
\ip{f,\theta} &= \sum_{j\in J} \ip{\check{\phi}_j^2*f,\theta}
= \sum_{j\in J} \ip{\phi^2_j\hat{f},\hat{\theta}} = \sum_{j\in J} \ip{\phi_j^2\hat{f},\tilde{\phi}_j^{2}\hat{\theta}} \\
&= \sum_{j\in J} \ip{\mathcal{F}^{-1}(\phi^2_j\hat{f}),\mathcal{F}^{-1}(\tilde{\phi}_j^{2}\hat{\theta})},
	\end{align*}
where we used that $\phi_j^2\tilde{\phi}_j^{2} = \phi_j^2$. 
Since $\text{supp}(\phi_j)$ is contained in $B_d(\xi_j,\delta \tilde{h}(\xi_j)), j\in J$ we use Hölder's inequality and Lemma \ref{lemma:nikolskij} with $q=\infty$ to obtain
	\begin{align*}
|\ip{f,\theta} |
&\leq \sum_{j\in J} \norm{\mathcal{F}^{-1}(\phi^2_j\hat{f})}_{L_\infty} \norm{\mathcal{F}^{-1}(\tilde{\phi}_j^{2}\hat{\theta})}_{L_1} \\
&\leq C \sum_{j\in J} \tilde{h}(\xi_j)^{d/p} \norm{\mathcal{F}^{-1}(\phi_j^2\hat{f})}_{L_p} \norm{\mathcal{F}^{-1}(\tilde{\phi}_j^{2}\hat{\theta})}_{L_1} \\
&\leq C\norm{f}_{\dot{M}_{p,\infty}^\beta (\tilde{h})} \sum_{j\in J }\tilde{h}(\xi_j)^{-\beta}\tilde{h}(\xi_j)^{d/p} \norm{\mathcal{F}^{-1}(\tilde{\phi}_j^{2}\hat{\theta})}_{L_1} \\	&\leq C \norm{f}_{\dot{M}_{p,\infty}^\beta (\tilde{h})} \norm{\theta}_{\dot{M}_{1,1}^{-\beta + d/p}(\tilde{h})}.
	\end{align*}
	Using that $\mathcal{S}_0(\mathbb{R}^d) \hookrightarrow \dot{M}_{1,1}^{-\beta + d/p}(\tilde{h})$, we obtain
	\begin{equation*}
|\ip{f,\theta}| \leq C \norm{f}_{\dot{M}_{p,\infty}^\beta(\tilde{h})} \norm{\theta}_M,
	\end{equation*}
for sufficiently large $M$. This proves that $\dot{M}_{p,\infty}^\beta(\tilde{h})$ is continuously embedded in $\mathcal{S}'\backslash\mathcal{P}$. Combining with \eqref{eq:modembed} shows part (1) of Proposition \ref{prop:complete}.
	Let us now show that $\dot{M}_{p,q}^\beta(\tilde{h})$ is complete.  
	Let $\{f_n\}_{n\in\mathbb{N}}$ be a Cauchy sequence in $\dot{M}_{p,q}^\beta(\tilde{h})$. As a consequence of the above, $f_n$ converges in $\mathcal{S}'\backslash\mathcal{P}$ to an element $f\in\mathcal{S}'\backslash\mathcal{P}$. Applying Fatou's Lemma twice we obtain
	\begin{align*}
		\norm{f-f_n}_{\dot{M}_{p,q}^\beta(\tilde{h})} &= \left( \sum_{j\in J} \left( \tilde{h}(\xi_j)^\beta \norm{\phi_j^2(D)(f-f_n)}_{L_p}\right)^q\right)^{1/q} \\
		&\leq \left[ \liminf_{m\to\infty} \sum_{j\in J} \left( \tilde{h}(\xi_j)^\beta \norm{\phi_j^2(D)(f_m-f_n)}_{L_p}\right)^q\right]^{1/q} \\
		&\leq \liminf_{m\to\infty} \left[ \sum_{j\in J} \left( \tilde{h}(\xi_j)^\beta \norm{\phi_j^2(D)(f_m-f_n)}_{L_p}\right)^q\right]^{1/q} \\
		&= \liminf_{m\to\infty} \norm{f_m-f_n}_{\dot{M}_{p,q}^\beta(\tilde{h})}.
	\end{align*}
This shows that $\norm{f-f_n}_{\dot{M}_{p,q}^\beta(\tilde{h})} \to 0$ as $n\to\infty$. Moreover, since
\begin{align*}
\norm{f}_{\dot{M}_{p,q}^\beta(\tilde{h})} &= \norm{f-f_n+f_n}_{\dot{M}_{p,q}^\beta(\tilde{h})} \\
&\leq C \left( \norm{f-f_n}_{\dot{M}_{p,q}^\beta(\tilde{h})} + \norm{f_n}_{\dot{M}_{p,q}^\beta(\tilde{h})}\right) < \infty,
\end{align*}
then $f$ belongs to $\dot{M}_{p,q}^\beta(\tilde{h})$. This proves that $\dot{M}_{p,q}^\beta(\tilde{h})$ is complete.
	
	We finally prove that $S_0(\mathbb{R}^d)$ is dense in $\dot{M}_{p,q}^\beta(\tilde{h})$ if $0<p<\infty$ and $0<q<\infty$. Let $f\in \dot{M}_{p,q}^\beta(\tilde{h})$ and set 		$f_N(x) = \sum_{j=-N}^N \mathcal{F}^{-1}\phi^2_j\mathcal{F}f$.
	Then $f_N \in \dot{M}_{p,q}^\beta(\tilde{h})$ and consequently,
	\begin{equation}\label{eq:dense}
	\norm{f-f_N}_{\dot{M}_{p,q}^\beta(\tilde{h})} \leq c  \left( \sum_{|j|>N}^\infty \left( \tilde{h}(\xi_j)^\beta \norm{\phi_j^2(D)f}_{L_p}\right)^q\right)^{1/q}.
	\end{equation}
	By Lebesgue's dominated convergence theorem, the right hand side of \eqref{eq:dense} tends to zero as $N\to\infty$. So $f_N$ approximates $f$ in $\dot{M}_{p,q}^\beta(\tilde{h})$, and if $f_N \in \mathcal{S}_0$ for all $N\in\mathbb{N}$ we have shown that any function in $\dot{M}_{p,q}^\beta(\tilde{h})$ can be approximated by functions in $\mathcal{S}_0(\mathbb{R}^d)$. Otherwise, we need to find $(f_N)_\delta \in S_0(\mathbb{R}^d)$ which approximates $f_N$ in $\dot{M}_{p,q}^\beta(\tilde{h})$. This can be done using convolution with an approximate identity. 
	Suppose $\tilde{\phi} \in \mathcal{S}(\mathbb{R}^d)$ satisfies $\int_{\mathbb{R}^d} \tilde{\phi}(\xi) = 1$ and define $\tilde{\phi}_\delta(\xi) := \delta^{-d}\tilde{\phi}(d/\xi)$. Moreover, for $\delta>0$ we define a smooth ramp function $\rho_\delta$ as 
	\begin{equation*}
		\rho_\delta(\xi) = \begin{cases}
			0& \text{if $|\xi|_{\vec{a}} < \delta$}, \\ 
			1& \text{if $2\delta \leq |\xi|_{\vec{a}}$}.
		\end{cases}
	\end{equation*} 
	Let $\phi_\delta(\xi) := \rho_\delta(\xi) \tilde{\phi}_\delta(\xi)$ and define
	\begin{equation*}
		(f_N)_\delta := \mathcal{F}^{-1} \{\rho_\delta * \tilde{\phi}_\delta * \mathcal{F}(f_N)\} = \mathcal{F}^{-1} \{\tilde{\phi}_\delta * (\rho_\delta * \mathcal{F}(f_N))\}.
	\end{equation*}
	Then $(f_N)_\delta \in S_0(\mathbb{R}^d)$ approximates $f_N$ in $L_p(Q_j)$. Since this is also an approximation of $f_N$ in $\dot{M}_{p,q}^\beta(\tilde{h})$ we have shown that $S_0(\mathbb{R}^d)$ is dense in $\dot{M}_{p,q}^\beta(\tilde{h})$.
\end{proof}
\bibliographystyle{abbrv}

\end{document}